\numberwithin{equation}{section}
\theoremstyle{plain}
\newtheorem{theorem}{Theorem}[section]
\newtheorem{lemma}[theorem]{Lemma}
\newtheorem{proposition}[theorem]{Proposition}
\newtheorem{corollary}[theorem]{Corollary}
\theoremstyle{definition}
\newtheorem{definition}[theorem]{Definition}
\newtheorem{example}[theorem]{Example}
\newtheorem{remark}[theorem]{Remark}
\newcommand{\N}{\mathbb{N}}
\newcommand{\R}{\mathbb{R}}
\newcommand{\eps}{\varepsilon}
\renewcommand{\m}{\mathfrak{m}}
\newcommand{\A}{\mathscr{A}}
\newcommand{\C}{\mathscr{C}}
\newcommand{\RP}{\mathsf{P}}
\newcommand{\RVar}{\mathsf{Var}}
\newcommand{\RBV}{\mathsf{BV}}
\newcommand{\RW}{\mathsf{W}}
\newcommand{\RD}{\mathsf{D}}
\renewcommand{\phi}{\varphi}
\renewcommand{\rho}{\varrho}
\renewcommand{\theta}{\vartheta}
\DeclareMathOperator{\supp}{supp}
\DeclareMathOperator*{\esssup}{ess\,sup}
\DeclareMathOperator*{\essinf}{ess\,inf}
\DeclareMathOperator{\Lip}{Lip}
\DeclareMathOperator{\loc}{loc}	
\DeclareMathOperator{\Var}{Var}	
\DeclareMathOperator{\AVR}{AVR}
\DeclareMathOperator{\Divv}{div}
\renewcommand{\div}{\Divv}
\newcommand{\de}{\mathrm{d}}				    
\DeclarePairedDelimiter{\set}{\{}{\}}
\mathchardef\ordinarycolon\mathcode`\:
\begin{document}

\title[Cheeger problem in measure spaces]{The Cheeger problem\\ in abstract measure spaces}

\author[V.~Franceschi]{Valentina Franceschi}
\address[V.~Franceschi]{Dipartimento di Matematica ``Tullio Levi Civita'', Università di Padova, via Trieste 63, 35121 Padova (PD), Italy}
\email{valentina.franceschi@unipd.it}

\author[A.~Pinamonti]{Andrea Pinamonti}
\address[A.~Pinamonti]{Dipartimento di Matematica, Università di Trento, via Sommarive 14, 38123 Povo (TN), Italy}
\email{andrea.pinamonti@unitn.it}

\author[G.~Saracco]{Giorgio Saracco}
\address[G.~Saracco]{Dipartimento di Matematica e Informatica ``Ulisse Dini'', Universit\`a di Firenze, viale Morgagni 67/A, 50134 Firenze (FI), Italy}
\email{giorgio.saracco@unifi.it}

\author[G.~Stefani]{Giorgio Stefani}
\address[G.~Stefani]{Scuola Internazionale Superiore di Studi Avanzati (SISSA), via Bonomea 265, 34136 Trieste (TS), Italy}
\email{gstefani@sissa.it {\normalfont or} giorgio.stefani.math@gmail.com}

\date{\today}

\keywords{Cheeger problem, eigenvalues, measure space}

\subjclass[2020]{Primary 49Q20; Secondary 35P15, 53A10}

\begin{abstract}
We consider non-negative $\sigma$-finite measure spaces coupled with a proper functional $P$ that plays the role of a perimeter. We introduce the Cheeger problem in this framework and extend many classical results on the Cheeger constant and on Cheeger sets to this setting, requiring minimal assumptions on the pair measure space-perimeter. Throughout the paper, the measure space will never be asked to be metric, at most topological, and this requires the introduction of a suitable notion of Sobolev spaces, induced by the coarea formula with the given perimeter.
\end{abstract}
 \hspace{-2cm}
 {
 \begin{minipage}[t]{0.6\linewidth}
 \begin{scriptsize}
 \vspace{-3cm}
 This is a pre-print of an article published in \emph{J.\ London Math.\ Soc}. The final authenticated version is available online at: \href{https://doi.org/10.1112/jlms.12840}{https://doi.org/10.1112/jlms.12840}
 \end{scriptsize}
\end{minipage} 
}

\maketitle

\section{Introduction}

In the Euclidean framework, the \emph{Cheeger constant} of a given set $\Omega\subset \mathbb{R}^n$ is defined as
\[
h(\Omega)=\inf\set*{\frac{P(E)}{\mathscr{L}^n(E)}\,:\, E\subset \Omega,\, \mathscr{L}^n(E)>0},
\]
where $\mathscr{L}^n(E)$ and $P(E)$, respectively, denote the $n$-dimensional Lebesgue measure of $E$ and the variational perimeter of $E$. The constant was first introduced by Jeff Cheeger in a Riemannian setting as a way to bound from below the first eigenvalue of the Laplace--Beltrami operator~\cite{Cheeger70}. The argument proposed is sound and robust, as noticed even earlier by Maz'ya~\cites{Maz62,Maz62-2} (an English translation is available in~\cite{Grigo98}).

In the past decades, the Cheeger constant has been extensively studied in view of its many applications: the lower bounds on the first eigenvalue of the Dirichlet $p$-Laplacian operator~\cite{KF03} and the equivalence of such an inequality with Poincar\'e's one~\cite{Milman09} (up to some convexity assumptions); the relation with the torsion problem~\cites{BE11, BEMP16}; the existence of sets with prescribed mean curvature~\cites{ACC05, LS20}; the existence of graphs with prescribed mean curvature~\cites{Giu78, LS18}; the reconstruction of noisy images~\cites{CC07, CL97, DAG09, ROF92}; and the minimum flow-maximum cut problem~\cites{Str83, Gri06} and its medical applications~\cite{AT06}. In addition, the Cheeger constant has been employed in elastic-plastic models of plate failure~\cite{Kel80} and (its Euclidean-weighted variant) has found applications to Bingham fluids~\cite{HG23} and landslide models~\cite{ILR05}. Moreover, the Cheeger constant of a square has been recently used to provide an elementary proof of the Prime Number Theorem~\cite{Ban64}. For more literature and a general overview of the problem, we refer the interested reader to the two surveys~\cites{Leo15, Parini11}. 

Because of its numerous applications, several authors have been drawn to the subject and started to investigate the constants and the above-mentioned links with other problems in several frameworks: weighted Euclidean spaces~\cites{ABCMP21, BP21, LT19, Saracco18}; finite-dimensional Gaussian spaces~\cites{CMN10, JS21}; anisotropic Euclidean and Riemannian spaces~\cites{ABT15, BNP01, CCMN08, KN08}; the fractional perimeter~\cite{BLP14} or non-singular non-local perimeter functionals~\cite{MRT19}; Carnot groups~\cite{Montefalcone13};  $\mathsf{RCD}$-spaces~\cites{DM21, DMS21}; and lately smooth metric-measure spaces~\cite{AAA21}.

In the settings mentioned above, the proofs mostly follow those available in the usual Euclidean space. In the present paper, we are interested in pinpointing the minimal assumptions needed on the space and on the perimeter functional in order to establish the fundamental properties of Cheeger sets, as well as the links to other problems. In the following, we shall be interested in non-negative $\sigma$-finite measure spaces endowed with a perimeter functional satisfying some suitable assumptions.

Our approach fits into a broader current of research that has gained popularity in the past decade, aiming to study variational problems, well-known in the Euclidean setting, in general spaces under the weakest possible assumptions. Quite often, the ambient space is a (metric) measure spaces. For example, such a general point of view has been adopted for the variational mean curvature of a set~\cite{BM16}, for shape optimization problems~\cite{BV13}, for Anzellotti--Gauss--Green formulas~\cite{GM21}, for the total variation flow~\cites{BCP22, BKP21} and, very recently, for the existence of isoperimetric clusters~\cite{NPST21}.

\subsection{Structure of the paper and results} 

In \cref{sec:PM_spaces}, we introduce the basic setting of \emph{perimeter $\sigma$-finite measure spaces}, that is, non-negative $\sigma$-finite measure spaces $(X, \A, \m)$ endowed with a proper functional $P\colon\A\to[0,+\infty]$ possibly satisfying suitable properties~\ref{prop:empty}--\ref{prop:complement_set} that we shall require from time to time.

A considerable effort goes toward defining $BV$ functions in measure spaces, where a metric is not available. Indeed, usually, the perimeter functional is \emph{induced} by the metric. In our setting, instead, only a perimeter functional is at disposal, so we use it to \emph{define} $BV$ functions by defining the total variation via the coarea formula with the given perimeter. 

To properly define Sobolev functions, we need a slightly richer structure, requiring the measure space to be endowed with a topology, and the perimeter functional $P(\cdot)$ to arise from a \emph{relative} (with respect to open sets $A$) perimeter functional $P(\,\cdot\,; A)$.
By using the relative perimeter, we refine the notion of $BV$ function by requiring that the variation is a finite measure.
This, in turn, allows us to define $W^{1,1}$ functions as $BV$ functions whose variation is absolutely continuous with respect to the reference measure. 
Afterward, via relaxation, we can define $W^{1,p}$ functions for any $p\in(1,+\infty)$. For a more detailed overview, we refer the reader to \cref{ssec:intro_1_eigen} and \cref{ssec:intro_p_eigen}.

Once the general framework is set, we then shall start to tackle the problem of our interest.

\subsubsection{Definition and existence} 
In \cref{sec:cheeger_sets_in_PM_spaces}, we define the Cheeger constant of a set $\Omega$ in terms of the ratio of the perimeter functional and the measure the space is endowed with. Actually, in a more general vein similar to that of~\cite{CL19}, we shall define the \emph{$N$-Cheeger constant} as
\[
h_N(\Omega)
=\inf\set*{\sum_{i=1}^N\frac{P(\mathcal{E}(i))}{\m(\mathcal{E}(i))} :\ \mathcal{E}=\{\mathcal{E}(i)\}_{i=1}^N\subset \Omega\ \text{is an $N$-cluster}},
\]
where, as usual, an \emph{$N$-cluster} is an $N$-tuple of pairwise disjoint subsets of $\Omega$, called \emph{chambers}, each of which with positive finite measure and finite perimeter.

In \cref{res:existence_N-Cheeger}, we provide a general existence result. Unsurprisingly, the key assumptions on the perimeter are the lower-semicontinuity and the compactness of its sublevels with respect to the $L^1$ norm, besides an isoperimetric-type property that prevents minimizing sequences to converge toward sets with null $\m$-measure.

Further, we provide inequalities between the $N$-Cheeger and $M$-Cheeger constants and prove some basic properties of $N$-Cheeger sets, with a particular attention to the case $N=1$.

\subsubsection{Link to sets with prescribed mean curvature}
In \cref{sec:PMC}, we introduce the notion of \emph{$P$-mean curvature} in the spirit of~\cite{BM16}. With this notion at our  disposal, we show that any $1$-Cheeger set has $h_1(\Omega)$ as one of its $P$-mean curvatures, see \cref{cor:variational_curvature_Cheeger}.
An analogous result holds for the chambers of an $N$-cluster minimizing $h_N(\Omega)$, see \cref{cor:variational_curvature_N-Cheeger}.

In \cref{prop:min_PMC}, we investigate the link between $h_1(\Omega)$ and the existence of non-trivial minimizers of the \emph{prescribed $P$-curvature functional}
\[
\mathcal{J}_\kappa[F]=P(F) - \kappa \m(F),
\]
where $\kappa$ is a fixed positive constant, among subsets $F\subset \Omega$. Such a functional, again requiring the lower-semicontinuity and the $L^1$ compactness of sublevel sets of the perimeter, has minimizers. If, additionally, one assumes that the perimeter functional satisfies $P(\emptyset)=0$, then $h_1(\Omega)$ acts as a threshold for the existence of non-trivial minimizers, that is, for $\kappa<h_1(\Omega)$  negligible sets are the only minimizers, while for $\kappa>h_1(\Omega)$  non-trivial minimizers exist.

\subsubsection{Link to the first eigenvalue of the Dirichlet $1$-Laplacian}\label{ssec:intro_1_eigen}

In the Euclidean space, one defines the \emph{first eigenvalue of the Dirichlet $1$-Laplacian} in a variational way as the infimum
\begin{equation}\label{eq:stegosauro}
\lambda_{1,1}(\Omega) = \inf\set*{\frac{\displaystyle\int_\Omega |\nabla u|\,\de x}{\|u\|_1}: u\in \mathrm{C}^1_c(\Omega), \ \|u\|_1>0}.
\end{equation}
In \cref{sec:realtion_with_1-eigen}, we investigate the relation between the $1$-Cheeger constant and a suitable reformulation of the constant~$\lambda_{1,1}(\Omega)$ in our abstract context.

In the Euclidean setting~\cite{KF03} and, actually, in the more general anisotropic central-symmetric Euclidean setting~\cite{KN08}, the constant $\lambda_{1,1}(\Omega)$ coincides with $h_1(\Omega)$ provided that the boundary of the set $\Omega$ is sufficiently smooth (e.g., Lipschitz regular). In particular, one can equivalently consider either smooth functions or $BV$-regular functions. Moreover, because of the smoothness of the boundary of $\Omega$, it holds that $BV(\Omega)=BV_0(\Omega)$, where
\[
BV_0(\Omega) = \set*{u\in BV(\R^n): u=0\ \text{a.e.~on}\ \R^n\setminus \Omega},
\]
see, e.g.,~\cite{CL19}*{Rem.~1.1} or~\cite{CC07}. Thus, under some regularity assumptions on $\Omega$, one can equivalently restate the problem in~\eqref{eq:stegosauro} as
\begin{equation}\label{eq:triceratopo}
\lambda_{1,1}(\Omega) = \inf\set*{\frac{\displaystyle\int_{\mathbb{R}^n} \de |Du|}{\|u\|_1} : u\in BV_0(\R^n),\ \|u\|_1>0}.
\end{equation}

On a general set $\Omega$ in the Euclidean space, the infimum in~\eqref{eq:triceratopo} is less than or equal to that in~\eqref{eq:stegosauro}, since one only has the inclusion $BV(\Omega)\subset BV_0(\Omega)$.

In a (possibly non-metric) perimeter-measure space, the constant $\lambda_{1,1}(\Omega)$ has to be suitably defined, since neither a notion of derivative (needed to state~\eqref{eq:stegosauro}) nor integration-by-parts formulas (needed to define $BV$ functions and thus state~\eqref{eq:triceratopo}) are at disposal.
To overcome this difficulty, we adopt the usual point of view~\cites{Vis90,Vis91,CGL10} and define the \emph{total variation} of a function via the (generalized) coarea formula
\begin{equation}\label{eq:trex}
\Var(u) = \int_\R P(\{u>t\})\,\de t,
\end{equation}
provided that the function $t\mapsto P(\{u>t\})$ is $\mathscr L^1$-measurable, and define the relevant $BV$ space as that of those $L^1$ functions with finite total variation. For more details, we refer the reader to our \cref{ssec:BV_measure_spaces}.

This approach allows us to consider problem~\eqref{eq:triceratopo} without any underlying metric structure.
In addition, no regularity of the set $\Omega$ is required, since there is no need for the  problem~\eqref{eq:triceratopo} to be equivalent to its regular counterpart~\eqref{eq:stegosauro} that, in the present abstract framework, cannot be even formally stated.

With this notion of total variation at hand, we prove that the constant $\lambda_{1,1}(\Omega)$ coincides with the $1$-Cheeger constant $h_1(\Omega)$ under minimal assumptions on the perimeter, that is, we require that the perimeter of negligible sets and of the whole space is zero, the perimeter is lower-semicontinuous with respect to the $L^1$ norm, and that the perimeter of a set coincides with that of its complement set, see \cref{res:relation_with_first_1_eigen}. Moreover, we prove some inequalities relating the $N$-Cheeger constant $h_N(\Omega)$ with a cluster counterpart of~\eqref{eq:triceratopo}. 
As observed in \cref{rem:relation_with_first_1_eigen_no_complement}, if one slightly modifies the  definition of $\lambda_{1,1}(\Omega)$ by considering non-negative functions as the only competitors, then one can obtain the relation with the Cheeger constant even for perimeter functionals which are not symmetric with respect to the complement-set operation.

\subsubsection{Link to the Dirichlet $p$-Laplacian and the $p$-torsion}\label{ssec:intro_p_eigen}

In the Euclidean space, the $1$-Cheeger constant comes into play in estimating some quantities related to the Laplace equation and to the torsional creep equation. More precisely, it provides lower bounds on the \emph{first eigenvalue of the Dirichlet $p$-Laplacian} for $p>1$ and to the $L^1$ norm of the \emph{$p$-torsional creep function}. In \cref{sec:p_eigenvalue}, we extend these results to our more general framework.

Both these problems require an extensive preliminary work to define Sobolev spaces in our general (non-metric) context. In order to do so we need a little more structure on the perimeter-measure space: we require it to be endowed with a topology, we require the class of measurable sets to be that of Borel sets, and we require the perimeter $P(\cdot)$ to stem from a relative perimeter when evaluated relatively to the whole space $X$.

We here quickly sketch how we construct these Sobolev spaces, and we refer the interested reader to \cref{ssec:relative_perimeter}. A relative perimeter functional allows, again via the relative coarea formula in a similar fashion to~\eqref{eq:trex}, to define the relative variation of an $L^1$ function $u$ with respect to a measurable set. When this happens to define a measure, we shall say that the function is in $\RBV(X, \m)$, and this extends the notion briefly discussed in \cref{ssec:intro_1_eigen} and formally introduced in \cref{ssec:BV_measure_spaces}. When this measure happens to be absolutely continuous with respect to $\m$, we shall say that the function is in $\RW^{1,1}(X,\m)$ and that the density of the measure with respect to $\m$ is the \emph{$1$-slope} of $u$. Via approximation arguments, one can then define the \emph{$p$-slope} of a function and the associated $\RW^{1,p}(X,\m)$ spaces. In turn, the approximation properties allow to define the Sobolev space $\RW^{1,p}_0(\Omega, \m)$, refer to \cref{def:sobolev_zero}.

Summing up, Sobolev spaces can be built as induced by a relative perimeter on the topological perimeter-measure space. Once this notion is available, one can define the first eigenvalue of the Dirichlet $p$-Laplacian for $p>1$ in an analogous manner to the standard, Euclidean one. In the classical setting, similarly to~\eqref{eq:stegosauro}, one defines
\begin{equation}\label{eq:carnotauro}
\lambda_{1,p}(\Omega) = \inf\set*{\frac{\displaystyle\int_{\Omega} |\nabla u|^p \de x}{\|u\|^p_p} : u\in \mathrm{C}^1_c(\Omega),\ \|u\|_p>0}.
\end{equation}
In our setting we cannot directly consider~\eqref{eq:carnotauro}, since no notion of derivative is available. However, the natural space of competitors of such a problem is the classical space of $W^{1,p}_0(\Omega)$ functions, and we do have an analogous notion of Sobolev space at our disposal, and thus, such a way is viable.

In Euclidean settings~\cites{KF03,KN08}, it is known that the inequality
\begin{equation*}
\left(\frac{h_1(\Omega)}{p}\right)^p \le \lambda_{1,p}(\Omega)
\end{equation*}
holds. In \cref{thm:CheegerIn} and \cref{res:relation_p_eigen_cheeger}, we prove that this inequality naturally extends to our general framework, provided that the relative perimeter satisfies some general assumptions.

Finally, we recall that the $p$-torsional creep function is the solution of the PDE with homogeneous Dirichlet boundary datum
\begin{equation}
    \label{eq:intro_torsional}
\begin{cases}
\begin{aligned}
-\Delta_p u &= 1, \qquad &&\text{in $\Omega$,}
\\
u&=0, &&\text{on $\partial \Omega$,}
\end{aligned}
\end{cases}
\end{equation}
where $-\Delta_p $ is the $p$-Laplace operator. It is known~\cite{BE11} that the solution $w_p$ of the PDE~\eqref{eq:intro_torsional} satisfies
\begin{equation}
    \label{eq:intro_torsional_est}
h_1(\Omega) \le p \left(\frac{\m(\Omega)}{\|w_p\|_1}\right)^{\frac{p-1}{p}}.
\end{equation}
As usual, we cannot directly consider~\eqref{eq:intro_torsional}, but we can work with the underlying Euler--Lagrange energy among functions in the Sobolev spaces we defined. In particular, we can prove that minimizers of the energy, if they exist, satisfy~\eqref{eq:intro_torsional_est} up to a slightly worse prefactor of $p^{1+\frac 1p}$, refer to \cref{thm:torsional}, provided that the relative perimeter satisfies some very general properties.

\subsubsection{Examples} In the last section of the paper, we collect several examples of spaces that meet our hypotheses. In particular, our very general approach basically covers all results known so far about the existence of Cheeger sets in finite-dimensional spaces, and the relation of the constant with the first eigenvalue of the Dirichlet $p$-Laplacian in numerous contexts. In some of the frameworks presented in \cref{sec:examples}, the results are new, up to our knowledge. 

Unfortunately, our approach does not cover the case of the infinite-di\-men\-sio\-nal Wiener space. In this case, one can suitably define the Cheeger constant and prove the existence of Cheeger sets. Nonetheless, this requires \textit{ad hoc} notions of $BV$ function and of perimeter that are quite different from the ones adopted in the present paper. We refer the interested reader to \cite{CMN10}*{Sect.~6} for a more detailed exposition about this specific framework.

\subsection*{Acknowledgements} 
The authors would like to thank Alessandro Carbotti, Michele Miranda and Diego Pallara for some useful discussions, and Nicol\`o De Ponti for several precious comments and for pointing out various references. Moreover, the authors would like to thank the reviewers for their thoughtful comments and efforts towards improving our manuscript.

The authors are members of the Istituto Nazionale di Alta Matematica (INdAM), Gruppo Nazionale per l'Analisi Matematica, la Probabilit\`a e le loro Applicazioni (GNAMPA).

The first-named, third-named and fourth-named author have received funding from INdAM under the INdAM--GNAMPA Project 2020 \textit{Problemi isoperimetrici con anisotropie} (n.~prot.~U-UFMBAZ-2020-000798 15-04-2020).

The first-named and the fourth-named authors have received funding from INdAM under the INdAM--GNAMPA 2022 Project \textit{Analisi
geometrica in strutture subriemanniane}, identification number CUP\_E55\-F22\-00\-02\-70\-001. 

The second-named author has received funding from INdAM under the INdAM--GNAMPA Project 2022 \textit{Problemi al bordo e applicazioni geometriche}, identification number CUP\_E55\-F22\-00\-02\-70\-001.

The third-named author has received funding from INdAM under the IN\-dAM--GNAM\-PA Project 2022 \textit{Stime ottimali per alcuni funzionali di forma}, identification number CUP\_E55\-F22\-00\-02\-70\-001.

The third-named author has received funding from Universit\`a di Trento (UNITN) under the Starting Grant Giovani Ricercatori 2021 project \textit{WeiCAp}, identification number CUP\_E65\-F21\-00\-41\-60\-001.

The fourth-named author has received funding from INdAM under the INdAM--GNAMPA Project 2023 \textit{Problemi variazionali per funzionali e operatori non-locali}, identification number CUP\_E53\-C22\-001\-930\-001.

The fourth-named author has received funding from the European Research Council (ERC) under the European Union Horizon 2020 research and innovation program (grant agreement No.~945655).

The present research has been started during a visit of the first-named and fourth-named authors at the Mathematics Department of the University of Trento. 
The authors have worked on the present paper during the \textit{XXXI Convegno Nazionale di Calcolo delle Variazioni} held in Levico Terme (Italy), supported by the the Mathematics Department of the University of Trento and the Centro Internazionale per la Ricerca Matematica (CIRM). The authors wish to thank these institutions for their support and kind hospitality.

\section{Perimeter-measure spaces}
\label{sec:PM_spaces}

The basic setting is that of non-negative $\sigma$-finite measure spaces $(X,\A, \m)$. We set that, for any $A,B\in\A$, by $A\subset B$ we mean that $\m(A\setminus B)=0$. We also let $L^0(X,\m)$ be the vector space of $\m$-measurable functions and, for $p\geq 1$, we let $L^p(X,\m)$ be the usual space of $p$-integrable functions, that is,
\[
L^p(X,\m)=\set*{u\in L^0(X,\m) : \int_X|u|^p\;\de \m<+\infty}.
\]
As usual, we identify $\m$-measurable functions coinciding $\m$-a.e.\ on~$X$.
In case $X$ is endowed with a topology $\mathscr T\subset\mathscr P(X)$, we let $\mathscr B(X)$ be the Borel $\sigma$-algebra generated by $\mathscr T$ and, in this case, we shall assume that $\mathscr A=\mathscr B(X)$.

\subsection{Perimeter functional}
\label{ssec:perimeter_functional}

In the same spirit of~\cite{BM16}*{Sect.~3}, we introduce the following definition.
\begin{definition}
    A \emph{perimeter functional} $P(\cdot)$ is any map 
    \begin{equation}
\label{eq:def_perimeter}
P\colon\A\to[0,+\infty],
\end{equation}
which is proper, i.e., $P(A)<+\infty$ for some $A\in\mathscr A$.
In this case, we call $(X,\mathscr A,\m,P)$ a \emph{perimeter-measure space}.
\end{definition}
Throughout the paper we will assume that the perimeter will satisfy some of the following properties:
\begin{enumerate}[label=(P.\arabic*)]
\item\label{prop:empty}
$P(\emptyset)=0$;
\item\label{prop:space}
$P(X)=0$;
\item\label{prop:sub-mod}
$P(E\cap F)+P(E\cup F)\le P(E)+P(F)$
for all $E,F\in\A$;
\item\label{prop:lsc}
$P$ is lower-semicontinuous with respect to the $L^1(X,\m)$ convergence;
\item\label{prop:compact} 
for any $\Omega\in\A$ with $\m(\Omega)<+\infty$, the family
\begin{equation*}
\set*{\chi_E : E\in\A,\ E\subset\Omega,\ P(E)\le c}
\end{equation*}
is compact in $L^1(X,\m)$ for all $c\ge0$;
\item\label{prop:isoperim}
there exists a function $f\colon(0,+\infty)\to (0,+\infty)$ such that
\[
\lim\limits_{\eps\to0^+}f(\eps)=+\infty
\]
with the following property: if $\eps>0$ and $E\in\A$ with $\m(E)\le\eps$, then $P(E)\ge f(\eps)\,\m(E)$;
\item\label{prop:complement_set}
$P(E)=P(X\setminus E)$
for all $E\in\A$.
\end{enumerate}

Assuming property~\ref{prop:complement_set} true, properties~\ref{prop:empty} and~\ref{prop:space} become equivalent. 
Throughout the paper, we often refer to~\ref{prop:isoperim} as an \emph{isoperimetric property}. 
Notice that, in case an isoperimetric inequality $P(E)\geq C \m(E)^{\frac{Q-1}{Q}}$ holds true for suitable $Q>1$ and $C>0$, and for all $E\in\mathcal{A}$ with $\m(E)<+\infty$, then \ref{prop:isoperim} clearly follows. Depending on the situation, it could be more convenient to prove \ref{prop:isoperim} directly or to rely on a finer isoperimetric-type inequality, see \cref{sec:examples}.
We remark that all the properties listed above will appear every now and then throughout the paper, but they are \emph{not enforced} throughout---every statement will precisely contain the bare minimum for its validity.

\begin{remark}[$P$ is invariant under $\m$-negligible modifications]
\label{rem:negligible_modifications}
Let property \ref{prop:lsc} be in force.
If $A,B\in\A$ are such that $\m(A\bigtriangleup B)=0$, then $P(A)=P(B)$. To see this, consider any measurable set $E$ and any $\m$-negligible set $N$, look at the constant sequence $\{E\cup N\}_k$ converging to $E$ in $L^1(X,\m)$, and at the constant one $\{E\}_k$ converging to $E\cup N$ and exploit \ref{prop:lsc}.
\end{remark}

\begin{remark}
Let property \ref{prop:isoperim} be in force. 
If $P(E)=0$, then the set $E$ is $\m$-negligible, that is, $\m(E)=0$. 
Conversely, if $\m(E)>0$, then $P(E)\in(0,+\infty]$. 
Thus, property \ref{prop:isoperim} says that the only sets with finite measure that could possibly have zero perimeter are $\m$-negligible sets.
Moreover, if properties \ref{prop:empty} and \ref{prop:lsc} are in force as well, then $\m$-negligible sets have zero perimeter, thanks to~\cref{rem:negligible_modifications}.
\end{remark}

\subsection{Variation and \texorpdfstring{$BV$}{BV} functions}
\label{ssec:BV_measure_spaces}

We define the \emph{variation} of a function $u\in L^0(X,\m)$ as
\begin{equation}
\label{eq:def_var_by_coarea}
\Var(u) =
\begin{cases}
\displaystyle
\int_\R P(\{u> t\})\, \de t,
&\text{if $t\mapsto P(\{u> t\})$ is $\mathscr L^1$-measurable,}
\\[5mm]
+\infty, 
&\text{otherwise.}
\end{cases}
\end{equation}
With this notation at hand, we let
\begin{equation}
\label{eq:def_BV_space_total}
BV(X,\m)= \set*{u\in L^1(X, \m): \Var(u)<+\infty}
\end{equation}
be the set of $L^1$ functions with \emph{bounded variation}.

We begin with the following result, proving that assuming the validity of properties~\ref{prop:empty} and~\ref{prop:space}, the variation coincides with the perimeter functional on characteristic functions.
 
\begin{lemma}[Total variation of sets]
\label{res:tot_var_of_sets}
Let properties~\ref{prop:empty} and \ref{prop:space} be in force. 
If $E\in\mathscr A$, then 
$\Var(\chi_E)=P(E)$.
\end{lemma}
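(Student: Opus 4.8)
The plan is to compute $\Var(\chi_E)$ directly from the definition~\eqref{eq:def_var_by_coarea} by examining the superlevel sets $\{\chi_E > t\}$ as $t$ ranges over $\R$. First I would observe that $\chi_E$ takes only the values $0$ and $1$, so for $t \geq 1$ one has $\{\chi_E > t\} = \emptyset$, for $0 \leq t < 1$ one has $\{\chi_E > t\} = E$, and for $t < 0$ one has $\{\chi_E > t\} = X$. Consequently the function $t \mapsto P(\{\chi_E > t\})$ is a step function: it equals $P(X)$ on $(-\infty, 0)$, equals $P(E)$ on $[0,1)$, and equals $P(\emptyset)$ on $[1, +\infty)$. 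In particular this function is $\mathscr L^1$-measurable (being piecewise constant), so we are in the first branch of~\eqref{eq:def_var_by_coarea} and $\Var(\chi_E) = \int_\R P(\{\chi_E > t\})\,\de t$.

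Next I would invoke the hypotheses: property~\ref{prop:space} gives $P(X) = 0$, so the contribution from $(-\infty, 0)$ vanishes despite the infinite length of that interval; and property~\ref{prop:empty} gives $P(\emptyset) = 0$, so the contribution from $[1, +\infty)$ likewise vanishes. The only surviving contribution is $\int_0^1 P(E)\,\de t = P(E)$, which yields $\Var(\chi_E) = P(E)$ as claimed.

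There is essentially no hard part here — this is a routine unwinding of definitions — but the one point deserving care is the role of properties~\ref{prop:empty} and~\ref{prop:space}: without them the integral would pick up the spurious terms $(+\infty) \cdot P(X)$ and $(+\infty) \cdot P(\emptyset)$, so both hypotheses are genuinely used and the statement would fail in general without them. A secondary minor point is the convention $\{u > t\}$ using a strict inequality, which is what makes the superlevel set equal to $E$ (rather than $X$) precisely on the half-open interval $[0,1)$; one should make sure the endpoints are handled consistently, though since single points have zero $\mathscr L^1$-measure the placement of endpoints does not affect the value of the integral anyway.
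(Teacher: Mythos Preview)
Your proposal is correct and follows essentially the same approach as the paper: compute the superlevel sets of $\chi_E$, observe that $t\mapsto P(\{\chi_E>t\})$ is a step function, and use~\ref{prop:empty} and~\ref{prop:space} to reduce the integral to $\int_0^1 P(E)\,\de t$. Your handling of the endpoints $t=0$ and $t=1$ is in fact slightly more careful than the paper's, though as you note this is immaterial since single points carry no $\mathscr L^1$-measure.
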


\begin{proof}
By definition, \ref{prop:empty}, and \ref{prop:space}, the function 
\begin{equation*}
t\mapsto P(\set*{\chi_E>t})
=
\begin{cases}
P(X), & t\le0,
\\
P(E), & 0<t\le1,
\\
P(\emptyset), & t>1,
\end{cases}
\end{equation*}
is $\mathscr L^1$-measurable, so that
\begin{equation*}
\Var(\chi_E)=\int_{\R}P(\set*{\chi_E>t})\,\de t
=
\int_0^1 P(E)\,\de t = P(E),
\end{equation*}
in virtue of \ref{prop:empty} and of \ref{prop:space}.
\end{proof}

\begin{remark}
\label{rem:zero_is_BV}
As an immediate consequence of \cref{res:tot_var_of_sets}, if~\ref{prop:empty} and~\ref{prop:space} are in force, then $\Var\colon L^0(X,\m)\to[0,+\infty]$ is a proper functional and $\chi_E\in BV(X,\m)$ whenever $E\in\mathscr A$ is such that $\m(E)<+\infty$ and $P(E)<+\infty$.
In particular, $0\in BV(X,\m)$ with $\Var(0)=0$.
\end{remark}

The following result rephrases~\cite{CGL10}*{Prop.~3.2} in the present context. 

\begin{lemma}[Basic properties of total variation]
\label{res:basic_props_var}
The following hold:
\begin{enumerate}[label=(\roman*)]

\item\label{item:prop_var_scaling} 
$\Var(\lambda u)=\lambda\Var(u)$ for all $\lambda>0$ and $u\in L^0(X,\m)$;

\item\label{item:prop_var_add_const}
$\Var(u+c)=\Var(u)$ for all $c\in\R$ and $u\in L^0(X,\m)$;

\item\label{item:prop_var_constant}
if~\ref{prop:empty} and~\ref{prop:space} are in force, then $\Var(c)=0$ for all $c\in\R$;

\item\label{item:prop_var_lsc}
if~\ref{prop:lsc} is in force, then $\Var\colon L^1(X,\m)\to[0,+\infty]$ is lower-semi\-con\-tin\-u\-ous with respect to the (strong) convergence in $L^1(X,\m)$.

\end{enumerate}
\end{lemma}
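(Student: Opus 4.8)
The plan is to verify each of the four items directly from the coarea-type definition~\eqref{eq:def_var_by_coarea}, reducing everything to manipulations of the superlevel sets $\{u>t\}$ under scaling and translation. The overarching theme is that each operation on $u$ induces an affine reparametrization of the variable $t$, and the $\mathscr L^1$-measurability hypothesis is preserved because affine changes of variable map measurable functions to measurable functions and preserve the integral up to the obvious Jacobian factor.

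For item~\ref{item:prop_var_scaling}, I would observe that for $\lambda>0$ one has $\{\lambda u>t\}=\{u>t/\lambda\}$ for every $t\in\R$; hence $t\mapsto P(\{\lambda u>t\})$ is measurable if and only if $s\mapsto P(\{u>s\})$ is (composition with the bijection $t\mapsto t/\lambda$), and in that case the substitution $s=t/\lambda$, $\de t=\lambda\,\de s$, gives $\int_\R P(\{\lambda u>t\})\,\de t=\lambda\int_\R P(\{u>s\})\,\de s$. If the measurability fails, both sides are $+\infty$ by convention, so the identity still holds. Item~\ref{item:prop_var_add_const} is analogous but even simpler: $\{u+c>t\}=\{u>t-c\}$, so the integrand is merely a translate, measurability is unaffected, and translation invariance of $\mathscr L^1$ on $\R$ yields $\Var(u+c)=\Var(u)$; again the non-measurable case is handled by the $+\infty$ convention. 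Item~\ref{item:prop_var_constant} follows by combining~\ref{item:prop_var_add_const} with the elementary computation for $u\equiv 0$: the superlevel set $\{0>t\}$ equals $X$ for $t<0$ and $\emptyset$ for $t\ge 0$, so under~\ref{prop:empty} and~\ref{prop:space} the integrand vanishes for $t\ne 0$ (it is $P(X)=0$ on $(-\infty,0)$ and $P(\emptyset)=0$ on $(0,+\infty)$), whence $\Var(0)=0$ and then $\Var(c)=\Var(0+c)=\Var(0)=0$ for every $c\in\R$. Alternatively one can invoke \cref{res:tot_var_of_sets} with $E=\emptyset$ or $E=X$ together with~\ref{item:prop_var_scaling}, though the scaling lemma only covers $\lambda>0$, so the direct argument via~\ref{item:prop_var_add_const} is cleaner for arbitrary $c$.

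The substantive point is item~\ref{item:prop_var_lsc}, the lower semicontinuity of $\Var$ on $L^1(X,\m)$, which is where I expect the main work to lie. Suppose $u_k\to u$ in $L^1(X,\m)$; I want $\Var(u)\le\liminf_k\Var(u_k)$, and I may assume the right-hand side is finite and, passing to a subsequence, that $\Var(u_k)$ converges to that liminf. Up to a further subsequence, $u_k\to u$ $\m$-a.e.\ on $X$. The classical strategy is to show that for $\mathscr L^1$-a.e.\ $t\in\R$ one has $\chi_{\{u_k>t\}}\to\chi_{\{u>t\}}$ in $L^1(X,\m)$: this holds at every $t$ that is not an atom of the pushforward measure $\m\circ u^{-1}$ restricted to where $u$ is finite, since a.e.\ convergence of $u_k$ to $u$ forces $\chi_{\{u_k>t\}}\to\chi_{\{u>t\}}$ pointwise a.e.\ on $\{u\ne t\}$ (and $\m(\{u=t\})=0$ for all but countably many $t$), and then dominated convergence — using that these are characteristic functions on a space where, after localizing to a set of finite measure via $\sigma$-finiteness or by the integrability of $u$, everything is bounded — upgrades this to $L^1$ convergence. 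Granting this, property~\ref{prop:lsc} gives $P(\{u>t\})\le\liminf_k P(\{u_k>t\})$ for $\mathscr L^1$-a.e.\ $t$; the map $t\mapsto\liminf_k P(\{u_k>t\})$ is measurable as a countable liminf of measurable functions (each $t\mapsto P(\{u_k>t\})$ being measurable since $\Var(u_k)<+\infty$ by assumption), so $t\mapsto P(\{u>t\})$ agrees a.e.\ with a measurable function; one must check this suffices to declare $t\mapsto P(\{u>t\})$ itself $\mathscr L^1$-measurable in the sense of~\eqref{eq:def_var_by_coarea} — since $\mathscr L^1$ is complete, a function a.e.\ equal to a Lebesgue-measurable one is Lebesgue-measurable, so this is fine — and hence $\Var(u)=\int_\R P(\{u>t\})\,\de t$. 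Finally Fatou's lemma yields
\[
\Var(u)=\int_\R P(\{u>t\})\,\de t\le\int_\R\liminf_k P(\{u_k>t\})\,\de t\le\liminf_k\int_\R P(\{u_k>t\})\,\de t=\liminf_k\Var(u_k),
\]
as desired. The delicate steps, and the ones I would write out with care, are (a) justifying the $L^1$-convergence of the superlevel characteristic functions for a.e.\ $t$ in the abstract $\sigma$-finite setting — one has to handle the tail of $X$ where $\m$ is infinite, but since $u_k,u\in L^1$ this is controlled — and (b) the bookkeeping that turns ``$P(\{u>t\})$ is a.e.\ equal to a measurable function'' into the measurability clause demanded by the definition of $\Var$, so that one is genuinely in the first case of~\eqref{eq:def_var_by_coarea} rather than defaulting to $+\infty$.
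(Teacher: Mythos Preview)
Your handling of items~(i)--(iii) is correct and matches the paper, which simply says these are ``natural consequences of the definition''. For item~(iv) your overall architecture---show $\chi_{\{u_k>t\}}\to\chi_{\{u>t\}}$ in $L^1(X,\m)$ for $\mathscr L^1$-a.e.\ $t$, apply~\ref{prop:lsc} levelwise, then Fatou---is exactly the paper's. The difference is in how that levelwise $L^1$ convergence is obtained. The paper uses the layer--cake identity
\[
\|u_k-u\|_1=\int_\R \m\big(\{u_k>t\}\bigtriangleup\{u>t\}\big)\,\de t,
\]
so that $L^1(\R)$-convergence of the integrand to~$0$ gives, along a further subsequence, $\m(\{u_k>t\}\bigtriangleup\{u>t\})\to0$ for a.e.\ $t$, with no domination argument needed.

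Your proposed route---pass to an a.e.\ convergent subsequence and upgrade pointwise convergence of indicators to $L^1$ via ``dominated convergence, localizing to a set of finite measure''---has a genuine gap as written. On a $\sigma$-finite space with $\m(X)=+\infty$ there is in general no fixed $L^1$ majorant for $\chi_{\{u_k>t\}\bigtriangleup\{u>t\}}$: even with $\sup_k\|u_k\|_1<\infty$, the union $\bigcup_k\{u_k>t\}$ can have infinite measure (take $X=\R$, $u=0$, $u_k=\chi_{[k,k+1/k]}$). Localizing via $\sigma$-finiteness does not help without a uniform tail estimate. The argument is repairable---for instance via the generalized dominated convergence theorem with moving majorants $G_k=(|u_k|+|u|)/|t|\to 2|u|/|t|$ in $L^1(X,\m)$ for each fixed $t\ne0$---but the paper's layer--cake identity is both shorter and avoids the issue entirely. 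A smaller slip: in your step~(b) you assert that $t\mapsto P(\{u>t\})$ ``agrees a.e.\ with'' $\liminf_k P(\{u_k>t\})$, but you have only proved the inequality $\le$ a.e., so completeness of $\mathscr L^1$ does not apply as stated; one should instead argue directly that $t\mapsto\{u>t\}$ is right-continuous for the pseudodistance $\m(A\bigtriangleup B)$ away from $t=0$ (using $u\in L^1$), and then compose with the lower-semicontinuous, hence Borel, functional~$P$.
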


\begin{proof}
The proofs of the first three points are natural consequences of the definition.

\vspace{1ex}

\textit{Proof of~\ref{item:prop_var_lsc}}.
Let $u_k,u\in L^1(X,\m)$ be such that $u_k\to u$ in $L^1(X,\m)$ as $k\to+\infty$. 
Without loss of generality, we can assume that
\[
\liminf_{k\to+\infty}\Var(u_k)<+\infty,
\] 
so that, up to possibly passing to a subsequence (which we do not relabel for simplicity), we have $\Var(u_k)<+\infty$ for all $k\in\N$.
Following~\cite{Maggi}*{Rem.~13.11}, one has 
\begin{equation*}
\|u_k-u\|_1
=
\int_\R\m(\set*{u_k>t}\bigtriangleup\set*{u>t})\,\de t,
\end{equation*}
thus, we immediately deduce that $\chi_{\set{u_k>t}}\to\chi_{\set{u>t}}$ in $L^1(X,\m)$ as $k\to+\infty$ for $\mathscr L^1$-a.e.\ $t\in\R$.
Thanks to property~\ref{prop:lsc}, we have that 
\begin{equation*}
P(\set*{u>t})
\le
\liminf_{k\to+\infty}
P(\set*{u_k>t})
\end{equation*}
for $\mathscr L^1$-a.e.\ $t\in\R$, and the map $t\mapsto P(\set*{u>t})$ is also $\mathscr L^1$-measurable. Therefore, by Fatou's Lemma, we conclude that 
\begin{align*}
\Var(u)
&=
\int_\R P(\set*{u>t})\,\de t
\le
\int_\R\liminf_{k\to+\infty}
P(\set*{u_k>t})\,\de t
\\
&\le
\liminf_{k\to+\infty}
\Var(u_k)
<+\infty,
\end{align*}
proving~\ref{item:prop_var_lsc}.
\end{proof}

The following result, which can be proved as in~\cite{CGL10} up to minor modifications, states that the variation functional is convex as soon as the perimeter functional is sufficiently well-behaved. 

\begin{proposition}[Convexity of variation]
\label{res:var_convex}
Let properties~\ref{prop:empty}, \ref{prop:space}, \ref{prop:sub-mod} and~\ref{prop:lsc} be in force. 
Then, 
$
\Var\colon L^1(X,\m)\to[0,+\infty]
$
is convex.
As a consequence, $BV(X,\m)$ is a convex cone in $L^1(X,\m)$.
\end{proposition}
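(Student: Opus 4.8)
The plan is to derive convexity of $\Var$ from its \emph{positive $1$-homogeneity together with subadditivity}. Indeed, $\Var$ is positively $1$-homogeneous and vanishes at $0$ by~\cref{res:basic_props_var} (the latter using~\ref{prop:empty} and~\ref{prop:space}), so once we prove
\[
\Var(u+v)\le\Var(u)+\Var(v)\qquad\text{for all }u,v\in L^1(X,\m),
\]
convexity follows at once, since for $\lambda\in[0,1]$ one gets $\Var(\lambda u+(1-\lambda)v)\le\Var(\lambda u)+\Var((1-\lambda)v)=\lambda\Var(u)+(1-\lambda)\Var(v)$; the assertion that $BV(X,\m)$ is a convex cone is then immediate. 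Thus the whole proof reduces to establishing the above subadditivity, and the strategy is a chain of reductions ending with the coarea definition~\eqref{eq:def_var_by_coarea} and the submodularity~\ref{prop:sub-mod}.

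\emph{Reductions.} First I would reduce to \emph{non-negative} $u,v$. The truncation $u^M:=(u\vee(-M))\wedge M$ satisfies $\Var(u^M)\le\Var(u)$ — its superlevel sets equal $\{u>t\}$ for $t\in(-M,M)$ and equal $X$ or $\emptyset$ otherwise, and the latter carry zero perimeter by~\ref{prop:empty} and~\ref{prop:space} — it converges to $u$ in $L^1(X,\m)$, and $\Var(u^M)=\Var(u^M+M)$ with $u^M+M\ge0$ by the translation invariance in~\cref{res:basic_props_var}. Hence, granting subadditivity for non-negative competitors, applying the inequality to $u^M+M$ and $v^M+M$ gives $\Var(u^M+v^M)\le\Var(u^M)+\Var(v^M)\le\Var(u)+\Var(v)$, and letting $M\to+\infty$ and using lower semicontinuity (\cref{res:basic_props_var}, which invokes~\ref{prop:lsc}) yields the general statement. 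Second, for non-negative $u,v\in BV(X,\m)$ — the remaining case being trivial — I would produce non-negative \emph{simple} functions $u_n\le u$ with $u_n\to u$ in $L^1(X,\m)$ and $\limsup_n\Var(u_n)\le\Var(u)$, and likewise for $v$; combining with lower semicontinuity once more, it then suffices to prove subadditivity for non-negative simple functions. The approximation is the one of~\cite{CGL10}: one rounds $u$ down to a grid, but to a \emph{shifted} grid $\{a+k/n:k\ge0\}$ (also truncated from below and above to stay within a set of finite measure and finitely many levels), and one checks that $\Var(u_{n,a})$ is essentially the sum of the quantities $\tfrac1n P(\{u>a+k/n\})$; averaging in $a\in(0,1/n)$ turns this sum into $\int_0^{+\infty}P(\{u>s\})\,\de s=\Var(u)$ up to errors vanishing as $n\to+\infty$ (here~\cref{rem:negligible_modifications} is used to replace $\{u\ge s\}$ by $\{u>s\}$ along almost every shift), so a good shift $a_n$ exists. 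This averaging step repairs the fact that, the map $t\mapsto P(\{u>t\})$ being merely measurable and non-monotone, arbitrary Riemann sums need not converge to the integral.

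\emph{The core.} Every non-negative simple function can be written in \emph{layer form} $v=\sum_{l=1}^m\beta_l\chi_{B_l}$ with $\beta_l>0$ and $B_1\supseteq\cdots\supseteq B_m$; then $\{v>t\}$ runs through $B_1,\dots,B_m$ as $t$ increases, so~\eqref{eq:def_var_by_coarea} together with~\cref{res:tot_var_of_sets} gives $\Var(v)=\sum_{l=1}^m\beta_lP(B_l)$ and, in particular, $\Var(v)=\Var(v')+\beta_mP(B_m)$, where $v'$ is $v$ with its innermost layer removed. I would then prove $\Var(u+v)\le\Var(u)+\Var(v)$ for every non-negative simple $u$ by induction on the number $m$ of layers of $v$: the case $m=0$ is trivial, and the inductive step, via $u+v=(u+v')+\beta_m\chi_{B_m}$ and the above additivity, reduces to the \emph{base estimate}
\[
\Var(u+\beta\chi_B)\le\Var(u)+\beta P(B)\qquad(\beta>0,\ B\in\A),
\]
for non-negative simple $u$. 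For the base estimate, observe the pointwise identity $\{u+\beta\chi_B>t\}=\{u>t\}\cup\bigl(\{u>t-\beta\}\cap B\bigr)$, whose two sets intersect exactly in $\{u>t\}\cap B$ because $\{u>t\}\subseteq\{u>t-\beta\}$; submodularity~\ref{prop:sub-mod} then gives
\[
P(\{u+\beta\chi_B>t\})+P(\{u>t\}\cap B)\le P(\{u>t\})+P(\{u>t-\beta\}\cap B).
\]
Since $u$ is simple, every term above is a step function of $t$, so one may integrate over a bounded interval $[-R,R]$ with $R$ large; after the substitution $s=t-\beta$ in the last integral the two ``$\cap B$'' contributions cancel except over $[-R-\beta,-R]$ — where $\{u>s\}=X$, so $P(\{u>s\}\cap B)=P(B)$ — and over $[R-\beta,R]$ — where $\{u>s\}=\emptyset$, so that contribution is $0$; discarding, via~\ref{prop:empty} and~\ref{prop:space}, the $t$-ranges on which the superlevel sets are $X$ or $\emptyset$, what is left is precisely $\Var(u+\beta\chi_B)\le\Var(u)+\beta P(B)$, and $\beta P(B)=\Var(\beta\chi_B)$ by~\cref{res:tot_var_of_sets}.

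\emph{Main obstacle.} The submodularity computation is short and, on simple functions, exact; the real difficulty is the first reduction — passing from arbitrary $L^1$ functions to simple ones \emph{with control of the variation} — because~\eqref{eq:def_var_by_coarea} is only lower semicontinuous under $L^1$ convergence and is not the limit of naive Riemann sums of the non-monotone map $t\mapsto P(\{u>t\})$; this is exactly what averaging over the mesh shift circumvents, and it is the point where the $\sigma$-finiteness of $\m$ forces the extra truncations in the construction. A pervasive but routine bookkeeping point is to keep track, in every integration in $t$, of the ranges where the superlevel sets degenerate to $\emptyset$ or $X$ and to discard those contributions via~\ref{prop:empty} and~\ref{prop:space}; together with~\cref{res:tot_var_of_sets} this is the only role of those two properties, while~\ref{prop:lsc} enters only through the semicontinuity and negligible-modification statements and~\ref{prop:sub-mod} only through the base estimate.
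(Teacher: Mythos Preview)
Your proposal is correct and follows exactly the route the paper has in mind: the paper gives no self-contained proof but simply refers to~\cite{CGL10}, and your argument---reducing convexity to subadditivity via $1$-homogeneity, then to non-negative simple functions via truncation/translation plus the shifted-grid averaging trick, and finally handling simple functions by the layer induction with the submodular ``base estimate''---is precisely the scheme of~\cite{CGL10}*{Prop.~3.2 and~3.4}. The bookkeeping you flag (finiteness of the $\cap B$ terms so they can be cancelled, the role of~\ref{prop:empty}--\ref{prop:space} at the endpoints, and the $\sigma$-finite truncations in the approximation step) is exactly the ``minor modifications'' the paper alludes to.
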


\subsection{Relative perimeter and relative variation}

\label{ssec:relative_perimeter}

In this subsection, we assume that the set~$X$ is endowed with a topology~$\mathscr T$ such that $\mathscr A=\mathscr B(X)$, the Borel $\sigma$-algebra generated by $\mathscr T$.
\begin{definition}
    A \emph{relative perimeter functional} $\RP$ is any map 
   \begin{equation}
\label{eq:def_relative_perimeter}
\RP\colon\mathscr B(X)\times\mathscr B(X)\to[0,+\infty].
\end{equation}
\end{definition}
Throughout the paper, we will assume that a relative perimeter will satisfy some of the following properties:
\begin{enumerate}[label=(RP.\arabic*)]
\item\label{propR:empty}
$\RP(\emptyset;A)=0$ 
for all $A\in\mathscr T$;
\item\label{propR:space}
$\RP(X;A)=0$
for all $A\in\mathscr T$;
\item\label{propR:sub-mod}
$\RP(E\cap F;A)+\RP(E\cup F;A)\le \RP(E;A)+\RP(F;A)$
for all $E,F\in\mathscr B(X)$ and $A\in\mathscr T$;

\item\label{propR:lsc}
for each $A\in\mathscr T$, $\RP(\cdot\,;A)$ is lower-semicontinuous with respect to the (strong) convergence in $L^1(X,\m)$.
\end{enumerate}	
We stress that in the properties above, the perimeter is relative to an open set $A$, and not to a general element of the Borel $\sigma$-algebra.

Moreover, following the same idea of \cref{ssec:BV_measure_spaces}, we let 
\begin{equation}
\label{eq:coarea_relative_def}
\RVar(u;A)
=
\begin{cases}
\displaystyle
\int_{\R} \RP(\set*{u>t};A)\,\de t,
&
\text{if}\ t\mapsto \RP(\set*{u>t};A)\ \text{is $\mathscr L^1$-meas.},
\\[5mm]
+\infty,
& \text{otherwise},
\end{cases}
\end{equation}
be the \emph{variation of $u\in L^0(X,\m)$ relative to  $A\in\mathscr B(X)$}.
In analogy with the approach developed in the previous sections, for each $A\in\mathscr T$ one can regard the map 
\begin{equation*}
\RP(\cdot\,;A)\colon\mathscr B(X)\to[0,+\infty]
\end{equation*}
as a particular instance of the perimeter functional introduced in~\eqref{eq:def_perimeter}.
Specifically, we use the notation 
\begin{equation}
\label{eq:def_tot_RP}
P(E)=\RP(E;X),
\qquad
\Var(u)=\RVar(u;X),
\end{equation}
for all $E\in\mathscr B(X)$ and $u\in L^0(X,\m)$ and we consider $P(E)$ as the perimeter of~$E$ in the sense of \cref{ssec:perimeter_functional}. Analogously, $\Var(u)$ stands as the variation of $u$ in the sense of \cref{ssec:BV_measure_spaces}.
Consequently, the space 
\begin{equation*}
BV(X,\m)=\set*{u\in L^1(X,\m):\RVar(u; X)<+\infty}
\end{equation*}
is the space defined in~\eqref{eq:def_BV_space_total}.
 
Below, we rephrase \cref{res:tot_var_of_sets}, \cref{res:basic_props_var} and \cref{res:var_convex} in the present setting. Their proofs are omitted, because they are similar to those already given or referred to.

\begin{lemma}[Relative variation of sets]
\label{res:relative_var_of_sets}
Let properties~\ref{propR:empty} and~\ref{propR:space} be in force. 
If $E\in\mathscr B(X)$, then $\RVar(\chi_E;A)=\RP(E;A)$ for all $A\in\mathscr T$.
\end{lemma}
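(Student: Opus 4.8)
The statement to prove is \cref{res:relative_var_of_sets}: under properties \ref{propR:empty} and \ref{propR:space}, for every Borel set $E$ and every open set $A$, we have $\RVar(\chi_E; A) = \RP(E; A)$.

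The plan is to mimic verbatim the proof of \cref{res:tot_var_of_sets}, simply carrying the open set $A$ along as a spectator. First I would fix an open set $A \in \mathscr T$ and a Borel set $E \in \mathscr B(X)$, and compute the superlevel sets $\set{\chi_E > t}$ explicitly: this set equals $X$ when $t \le 0$, equals $E$ when $0 < t \le 1$, and equals $\emptyset$ when $t > 1$ (all equalities being genuine set equalities, not just $\m$-a.e.). Consequently the function $t \mapsto \RP(\set{\chi_E > t}; A)$ is the step function taking the value $\RP(X;A)$ on $(-\infty, 0]$, the value $\RP(E;A)$ on $(0,1]$, and the value $\RP(\emptyset; A)$ on $(1, +\infty)$. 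Being a step function with finitely many jumps, it is $\mathscr L^1$-measurable, so the first branch of the definition~\eqref{eq:coarea_relative_def} applies.

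Then I would apply properties \ref{propR:empty} and \ref{propR:space}, which give $\RP(\emptyset; A) = 0$ and $\RP(X; A) = 0$ precisely because $A \in \mathscr T$ is open — this is exactly where the hypothesis that $A$ be open (rather than an arbitrary Borel set) is used, and it is the only place the two properties enter. Therefore
\[
\RVar(\chi_E; A) = \int_{\R} \RP(\set*{\chi_E > t}; A) \, \de t = \int_0^1 \RP(E; A) \, \de t = \RP(E; A),
\]
which is the claim. The proof is entirely routine; there is no real obstacle, and indeed the excerpt itself states that the proof is omitted because it is ``similar to those already given.'' The only point worth a moment's care is to make sure the superlevel-set identities are exact set equalities so that no $\m$-negligible-modification subtlety (as in \cref{rem:negligible_modifications}) is needed, and to note that measurability of the integrand comes for free from its being a simple function. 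If one prefers brevity, one can simply write ``The proof is identical to that of \cref{res:tot_var_of_sets}, replacing $P(\cdot)$ by $\RP(\cdot; A)$ and invoking \ref{propR:empty} and \ref{propR:space} in place of \ref{prop:empty} and \ref{prop:space}.''
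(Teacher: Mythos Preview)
Your proof is correct and is exactly the approach the paper intends: the authors omit the proof precisely because it is the same computation as in \cref{res:tot_var_of_sets} with $\RP(\cdot;A)$ in place of $P(\cdot)$ and \ref{propR:empty}, \ref{propR:space} in place of \ref{prop:empty}, \ref{prop:space}. (A tiny quibble: the endpoint cases $t=0$ and $t=1$ in your case split are off by a single point each---e.g., $\{\chi_E>0\}=E$, not $X$---but this is a measure-zero discrepancy that does not affect either measurability or the value of the integral, and the paper's own proof of \cref{res:tot_var_of_sets} contains the same harmless slip.)
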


\begin{lemma}[Basic properties of relative variation]
\label{res:basic_props_varR}
The following hold:
\begin{enumerate}[label=(\roman*)]

\item 
$\RVar(\lambda u;A)=\lambda\RVar(u;A)$ for all $\lambda>0$, $A\in\mathscr T$ and $u\in L^0(X,\m)$;

\item
$\RVar(u+c;A)=\RVar(u;A)$ for all $A\in\mathscr T$, $c\in \R$, and $u\in L^0(X,\m)$;

\item
if~\ref{propR:empty} and~\ref{propR:space} are in force, then $\RVar(c;A)=0$ for all $c\in\R$ and $A\in\mathscr T$;

\item 
if~\ref{propR:lsc} is in force, then, for each $A\in\mathscr T$, the relative variation $\RVar(\cdot\,;A)\colon L^1(X,\m)\to[0,+\infty]$ is lower-semicontinuous with respect to the (strong) convergence in $L^1(X,\m)$.

\end{enumerate}
\end{lemma}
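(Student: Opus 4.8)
The plan is to freeze an open set $A\in\mathscr T$ and read off each item from the corresponding part of \cref{res:basic_props_var}. The key observation, already implicit in the text preceding the statement, is that for fixed $A$ the map $\RP(\cdot\,;A)\colon\mathscr B(X)\to[0,+\infty]$ is an instance of the perimeter functional~\eqref{eq:def_perimeter} and, comparing~\eqref{eq:coarea_relative_def} with~\eqref{eq:def_var_by_coarea}, the functional $\RVar(\cdot\,;A)$ is exactly the variation built from $\RP(\cdot\,;A)$. Moreover, for each fixed $A$, hypotheses~\ref{propR:empty}, \ref{propR:space} and~\ref{propR:lsc} are nothing but~\ref{prop:empty}, \ref{prop:space} and~\ref{prop:lsc} for $\RP(\cdot\,;A)$. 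Hence \cref{res:basic_props_var}, applied to $\RP(\cdot\,;A)$ for every $A\in\mathscr T$, delivers all four assertions; the proof is then a matter of checking these translations and repeating a few lines.

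In detail: items (i) and (ii) require no hypothesis on $\RP$ and follow from the set identities $\set*{\lambda u>t}=\set*{u>t/\lambda}$ (for $\lambda>0$) and $\set*{u+c>t}=\set*{u>t-c}$, together with the fact that an invertible affine change of variables in the integral of~\eqref{eq:coarea_relative_def} preserves $\mathscr L^1$-measurability of the integrand; this yields $\RVar(\lambda u;A)=\lambda\,\RVar(u;A)$ and $\RVar(u+c;A)=\RVar(u;A)$, the convention in~\eqref{eq:coarea_relative_def} making the identities hold also when the integrand fails to be measurable (both sides being $+\infty$). Item (iii) uses~\ref{propR:empty} and~\ref{propR:space}: for a constant $c$ one has $\set*{c>t}=X$ if $t<c$ and $\set*{c>t}=\emptyset$ if $t\ge c$, so $t\mapsto\RP(\set*{c>t};A)$ is identically $0$, hence measurable, and $\RVar(c;A)=0$.

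The only item carrying analytic content is (iv), which I would prove by transcribing the argument for item~\ref{item:prop_var_lsc} of \cref{res:basic_props_var} with $P$ replaced by $\RP(\cdot\,;A)$ and~\ref{prop:lsc} by~\ref{propR:lsc}: given $u_k\to u$ in $L^1(X,\m)$ with $\liminf_k\RVar(u_k;A)<+\infty$, pass to a (non-relabeled) subsequence with $\RVar(u_k;A)<+\infty$ for all $k$, use $\|u_k-u\|_1=\int_\R\m(\set*{u_k>t}\bigtriangleup\set*{u>t})\,\de t$ to extract a further subsequence along which $\chi_{\set{u_k>t}}\to\chi_{\set{u>t}}$ in $L^1(X,\m)$ for $\mathscr L^1$-a.e.\ $t$, invoke~\ref{propR:lsc} to get $\RP(\set*{u>t};A)\le\liminf_k\RP(\set*{u_k>t};A)$ for a.e.\ $t$ (and the $\mathscr L^1$-measurability of $t\mapsto\RP(\set*{u>t};A)$), and conclude via Fatou's lemma. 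The main—and really only—delicate point is bookkeeping: one must check that the measurability caveat in~\eqref{eq:coarea_relative_def} is handled exactly as the one in~\eqref{eq:def_var_by_coarea}, and that each relative hypothesis does restrict, for every $A\in\mathscr T$, to its non-relative counterpart in \cref{res:basic_props_var}; no difficulty arises that is not already present there, which is why the proof can be safely omitted.
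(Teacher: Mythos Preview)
Your proposal is correct and follows exactly the approach the paper indicates: the paper omits the proof entirely, noting that it is obtained by specializing \cref{res:basic_props_var} to the perimeter $\RP(\cdot\,;A)$ for each fixed $A\in\mathscr T$, which is precisely what you carry out in detail.
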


\begin{proposition}[Convexity of relative variation]
\label{res:convex_varR}
Let properties~\ref{propR:empty}, \ref{propR:space}, \ref{propR:sub-mod}  and~\ref{propR:lsc} be in force.
Then, for each $A\in\mathscr T$, the functional $\RVar(\cdot\,;A)\colon L^1(X,\m)\to[0,+\infty]$ is convex.
\end{proposition}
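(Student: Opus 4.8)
The plan is to obtain the statement as an immediate consequence of \cref{res:var_convex}, exploiting the observation already made above that, for each fixed $A\in\mathscr T$, the slice $\RP(\cdot\,;A)\colon\mathscr B(X)\to[0,+\infty]$ is a perimeter functional in the sense of~\eqref{eq:def_perimeter}. Set $Q:=\RP(\cdot\,;A)$. Then $Q$ is proper, since $Q(\emptyset)=0$ by~\ref{propR:empty}; moreover properties~\ref{propR:empty}, \ref{propR:space}, \ref{propR:sub-mod} and~\ref{propR:lsc}, read with this fixed $A$, are exactly properties~\ref{prop:empty}, \ref{prop:space}, \ref{prop:sub-mod} and~\ref{prop:lsc} for $Q$. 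Comparing~\eqref{eq:def_var_by_coarea} with~\eqref{eq:coarea_relative_def}, the variation functional associated with the perimeter $Q$ is precisely $\RVar(\cdot\,;A)$. Hence \cref{res:var_convex} applied to $Q$ yields that $\RVar(\cdot\,;A)\colon L^1(X,\m)\to[0,+\infty]$ is convex, and since $A\in\mathscr T$ was arbitrary the proposition follows.

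For completeness one may prefer to reproduce the argument inline, in which case it copies the proof of \cref{res:var_convex}, itself modelled on~\cite{CGL10}. The scheme has three steps. First, by the positive $1$-homogeneity of $\RVar(\cdot\,;A)$ (\cref{res:basic_props_varR}), convexity is equivalent to subadditivity, that is, to the inequality $\RVar(u+v;A)\le\RVar(u;A)+\RVar(v;A)$ for all $u,v\in L^1(X,\m)$. Second, one establishes this inequality for $u,v$ taking only finitely many values: expressing such functions through their finitely many superlevel sets and combining the submodularity~\ref{propR:sub-mod} of $\RP(\cdot\,;A)$ with the normalisations $\RP(\emptyset;A)=\RP(X;A)=0$ coming from~\ref{propR:empty} and~\ref{propR:space}, the estimate reduces to a finite, combinatorial inequality of Lovász-extension type. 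Third, one removes the simplicity assumption by approximating general $u,v\in L^1(X,\m)$ with simple functions $u_j\to u$, $v_j\to v$ in $L^1(X,\m)$ chosen so that $\RVar(u_j;A)\to\RVar(u;A)$ and $\RVar(v_j;A)\to\RVar(v;A)$ (e.g.\ dyadic truncations, for which convergence of the coarea integrals follows from monotone convergence), and passing to the limit through the lower semicontinuity of $\RVar(\cdot\,;A)$ (\cref{res:basic_props_varR}), using $u_j+v_j\to u+v$ in $L^1(X,\m)$.

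The only genuinely delicate point is this last step: lower semicontinuity by itself only gives $\RVar(u+v;A)\le\liminf_j\RVar(u_j+v_j;A)$, so one must arrange that the approximants carry variation \emph{converging} to that of $u$ and $v$, and not merely bounded by it; one must also keep track of the $\mathscr L^1$-measurability clause in~\eqref{eq:coarea_relative_def}, which, as in the proof of \cref{res:basic_props_varR}, is guaranteed by~\ref{propR:lsc} whenever the right-hand side of the claimed inequality is finite. Since all of this is precisely what is carried out for $\Var$ in \cref{ssec:BV_measure_spaces}, invoking \cref{res:var_convex} for the perimeter $\RP(\cdot\,;A)$ is the cleanest way to conclude.
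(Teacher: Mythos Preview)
Your proposal is correct and matches the paper's approach exactly: the paper explicitly states that \cref{res:convex_varR} is just the rephrasing of \cref{res:var_convex} in the relative setting, with proof omitted because it is identical, and your reduction via $Q:=\RP(\cdot\,;A)$ is precisely this rephrasing. The additional outline you give of the \cite{CGL10} argument is also consistent with the reference the paper cites for \cref{res:var_convex}.
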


\subsubsection{Variation measure}

We now define the \emph{perimeter} and \emph{variation measures} by rephrasing the validity of the relative coarea formula~\eqref{eq:coarea_relative_def} in a measure-theoretic sense.

\begin{definition}[Perimeter and variation measures]
\label{def:per_var_measures}
We say that a set $E\in\mathscr B(X)$ has \emph{finite perimeter measure} if its relative perimeter
\begin{equation*}
\RP(E;\,\cdot\,)\colon\mathscr B(X)\to[0,+\infty]
\end{equation*} 
defines a finite outer regular Borel measure on~$X$.
We hence say that a function $u\in L^0(X,\m)$ has \emph{finite variation measure} if the set $\set{u>t}$ has finite perimeter for $\mathscr L^1$-a.e.\ $t\in\R$ and its relative variation
\begin{equation*}
\RVar(u;\,\cdot\,)
\colon
\mathscr B(X)\to[0,+\infty)
\end{equation*} 
defines a finite outer regular Borel measure on~$X$.
\end{definition}

Adopting the usual notation, if $E\in\mathscr B(X)$ has finite perimeter measure, then we write $\RP(E;A)=|\RD\chi_E|(A)$ for all $A\in\mathscr B(X)$.
Similarly, if $u\in L^0(X,\m)$ has finite variation measure, then we write $\RVar(u;A)=|\RD u|(A)$ for all $A\in\mathscr B(X)$.

It is worth noticing that \cref{def:per_var_measures} is well posed in the following sense. 
As soon as properties~\ref{propR:empty} and~\ref{propR:space} are in force, if $E\in\mathscr B(X)$ has finite perimeter measure, then $\chi_E\in L^0(X,\m)$ has finite variation measure with $\RVar(\chi_E;\,\cdot\,)=\RP(E;\,\cdot\,)$, since they are outer regular Borel measures on~$X$ agreeing on open sets. This is a simple consequence of \cref{res:relative_var_of_sets}.

By \cref{def:per_var_measures}, if $u\in L^0(X,\m)$ has finite variation measure, then, for each $A\in\mathscr B(X)$, we have $\RVar(u;A)<+\infty$  and thus
\begin{equation*}
t\mapsto \RP(\set{u>t};A)
\in L^1(\R),
\end{equation*}
so that we can write
\begin{equation*}
|\RD u|(A)=\RVar(u;A)=\int_\R \RP(\set*{u>t};A)\,\de t
=
\int_\R  |\RD\chi_{\set*{u>t}}|(A)\, \de t.
\end{equation*}
In more general terms, we get the following extension of the relative coarea formula~\eqref{eq:coarea_relative_def}.
Its proof follows from a routine approximation argument (see~\cite{AFP00book} for instance) and is thus omitted.

\begin{corollary}[Generalized coarea formula]
\label{res:coarea_gen}
If $u\in L^0(X,\m)$ has finite variation measure, then
\begin{equation*}
\int_A\phi\,\de |\RD u|
=
\int_\R\int_A\phi\,\de |\RD\chi_{\set*{u>t}}|\,\de t
\end{equation*}
for all $\phi\in L^0(X,\m)$ and $A\in\mathscr B(X)$. 
\end{corollary}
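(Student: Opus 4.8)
The statement to prove is the generalized coarea formula (Corollary~\ref{res:coarea_gen}): if $u\in L^0(X,\m)$ has finite variation measure, then
\[
\int_A\phi\,\de|\RD u| = \int_\R\int_A\phi\,\de|\RD\chi_{\{u>t\}}|\,\de t
\]
for all $\phi\in L^0(X,\m)$ and $A\in\mathscr B(X)$.

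My plan:

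The key observation is that the definition of "finite variation measure" already tells us that $|\RD u|(A) = \RVar(u;A) = \int_\R \RP(\{u>t\};A)\,\de t = \int_\R |\RD\chi_{\{u>t\}}|(A)\,\de t$ for all $A\in\mathscr B(X)$ — this is precisely stated in the paragraph just before the corollary. So the case $\phi = \chi_A$ (or more precisely $\phi \equiv 1$ restricted to $A$) is immediate. The goal is to bootstrap from characteristic functions to general $\phi\in L^0$.

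Step 1: Reduce to $\phi = \chi_B$ for $B\in\mathscr B(X)$. Apply the known identity with $A$ replaced by $A\cap B$: this gives $\int_{A\cap B}\de|\RD u| = \int_\R\int_{A\cap B}\de|\RD\chi_{\{u>t\}}|\,\de t$, which is exactly the claim for $\phi = \chi_B$.

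Step 2: Linearity gives the formula for simple functions $\phi = \sum_{k=1}^m c_k\chi_{B_k}$ with $c_k\geq 0$.

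Step 3: Monotone convergence. For general $\phi\in L^0(X,\m)$ with $\phi\geq 0$, take an increasing sequence of non-negative simple functions $\phi_j\uparrow\phi$. Apply the simple-function case and pass to the limit: on the left, $\int_A\phi_j\,\de|\RD u|\uparrow\int_A\phi\,\de|\RD u|$ by monotone convergence for the measure $|\RD u|$; on the right, apply monotone convergence twice (first in the inner integral against $|\RD\chi_{\{u>t\}}|$ for each fixed $t$, then in the outer integral $\de t$), using that $t\mapsto\int_A\phi_j\,\de|\RD\chi_{\{u>t\}}|$ is measurable and increasing in $j$. One should check measurability in $t$ of $t\mapsto\int_A\phi\,\de|\RD\chi_{\{u>t\}}|$, which follows since it is the increasing limit of the measurable functions $t\mapsto\int_A\phi_j\,\de|\RD\chi_{\{u>t\}}|$; the latter are measurable because they are finite linear combinations of $t\mapsto|\RD\chi_{\{u>t\}}|(A\cap B_k) = \RP(\{u>t\};A\cap B_k)$, which are $\mathscr L^1$-measurable by the definition of finite perimeter measure applied with the set $A\cap B_k$.

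Step 4: Split a general $\phi\in L^0$ into positive and negative parts $\phi = \phi^+ - \phi^-$ and apply Step 3 to each; subtract, noting that all integrals against the finite measures $|\RD u|$ and $|\RD\chi_{\{u>t\}}|$ are finite (indeed $|\RD u|$ is a finite Borel measure and $\int_\R|\RD\chi_{\{u>t\}}|(X)\,\de t = |\RD u|(X) <+\infty$), so no $\infty-\infty$ ambiguity arises — or simply observe the identity holds in $[0,+\infty]$ for $\phi^\pm$ and that finiteness of $|\RD u|$ forces both sides to be finite when $\phi\in L^1(|\RD u|)$, and is trivially $+\infty = +\infty$ otherwise by the $\phi\geq 0$ case.

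The main obstacle is purely bookkeeping: ensuring the measurability in $t$ of the inner integral $t\mapsto\int_A\phi\,\de|\RD\chi_{\{u>t\}}|$ so that the outer integral makes sense, and being careful that the double monotone convergence (Tonelli-type interchange) is legitimate — but since everything is non-negative and we are only taking increasing limits, this is routine. There is no genuine analytic difficulty here; the content is entirely in the structure already established (the measure-theoretic rephrasing of the coarea formula in the definition and in the displayed identity preceding the statement), and the corollary is just the extension of that identity from indicator functions to arbitrary measurable integrands by the standard machine. This is why the authors say the proof "follows from a routine approximation argument" and omit it.
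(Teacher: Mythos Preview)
Your proof is correct and is precisely the routine approximation argument (characteristic functions $\to$ simple functions $\to$ non-negative measurable functions via monotone convergence $\to$ signed functions) that the paper invokes but omits. The paper provides no proof of its own beyond the reference to~\cite{AFP00book}, so there is nothing further to compare.
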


Keeping the same notation used in the previous sections, we let 
\begin{equation*}
\RBV(X,\m)=\set*{u\in L^1(X,\m) : u\ \text{has finite variation measure}}.
\end{equation*}
Notice that although $\RBV(X,\m)\subset BV(X,\m)$ and $BV(X,\m)$ is a convex cone in $L^1(X,\m)$, the set $\RBV(X,\m)$ may not be a convex cone in $L^1(X,\m)$ as well, since the validity of the implication 
\begin{equation*}
u,v\in \RBV(X,\m)
\implies
u+v\ \text{has finite variation measure}
\end{equation*}
is not automatically granted.
For an example of such a phenomenon, we refer the interested reader to the variation of intrinsic maps between subgroups of sub-Riemannian Carnot groups~\cite{SCV}*{Rem.~4.2}, but we will not enter into the details of this issue because it is out of the scope of the present paper. 

This being said, we introduce the following additional property for the relative perimeter~$\RP$  in~\eqref{eq:def_relative_perimeter} requiring the closure of $\RBV(X,\m)$ with respect to the sum of functions:
\begin{enumerate}[label=(RP.+)]
\item
\label{propR:BV_cone}
$u,v\in\RBV(X,\m)\implies u+v\in \RBV(X,\m)$.
\end{enumerate}

We now outline some consequences of \cref{res:basic_props_varR} and \cref{res:convex_varR}, and leave the simple proofs of these statements to the interested reader, see also the proof of~\cref{res:basic_props_var}. 

\begin{corollary}[Basic properties of variation measure]
\label{res:basic_props_var_meas}
Let properties~\ref{propR:empty}, \ref{propR:space}, \ref{propR:sub-mod}  and~\ref{propR:lsc} be in force.
The following hold:
\begin{enumerate}[label=(\roman*)]

\item 
if $u\in L^0(X,\m)$ has finite variation measure, then $\lambda u$ has finite variation measure, with $|\RD(\lambda u)|=\lambda|\RD u|$, for all $\lambda>0$;

\item
if $u\in L^0(X,\m)$ has finite variation measure, then $u+c$ has finite variation measure, with $|\RD(u+c)|=|\RD u|$, for all $c\in\R$;

\item
constant functions have finite variation measure and $|\RD c|=0$ for all $c\in\R$ (in particular, $0\in\RBV(X,\m)$); 

\item
if $\{u_k\}_{k\in\N}\subset\RBV(X,\m)$ and $u_k\to u$ in $L^1(X,\m)$ as $k\to+\infty$ for some $u\in\RBV(X,\m)$, then 
\begin{equation*}
|\RD u|(A)\le\liminf_{k\to+\infty}|\RD u_k|(A)
\end{equation*}
for all $A\in\mathscr T$;

\item 
if also property~\ref{propR:BV_cone} is in force and $u,v\in\RBV(X,\m)$, then 
\begin{equation*}
|\RD(u+v)|\le|\RD u|+|\RD v|
\end{equation*}
as outer regular Borel measures on~$X$.
\end{enumerate}
\end{corollary}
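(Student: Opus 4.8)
The plan is to reduce each assertion to its ``sectional'' form on open sets --- which is already available from \cref{res:relative_var_of_sets}, \cref{res:basic_props_varR} and \cref{res:convex_varR} --- and then to lift it to a statement about measures by means of the elementary fact that, if $\mu$ and $\nu$ are outer regular Borel measures on~$X$ with $\mu(A)\le\nu(A)$ for every $A\in\mathscr T$, then $\mu(B)\le\nu(B)$ for every $B\in\mathscr B(X)$; indeed, outer regularity gives $\mu(B)=\inf\{\mu(A):B\subset A\in\mathscr T\}\le\inf\{\nu(A):B\subset A\in\mathscr T\}=\nu(B)$. In particular two finite outer regular Borel measures agreeing on $\mathscr T$ coincide, exactly as already used in the discussion following \cref{def:per_var_measures}.

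For~(i) and~(ii) I would start from the identities $\{\lambda u>t\}=\{u>t/\lambda\}$ and $\{u+c>t\}=\{u>t-c\}$. Performing the affine change of variables $s=t/\lambda$, respectively $s=t-c$, both in the ``$\mathscr L^1$-a.e.\ in $t$'' statement and in the coarea integral~\eqref{eq:coarea_relative_def}, one checks that $\{\lambda u>t\}$, respectively $\{u+c>t\}$, has finite perimeter measure for $\mathscr L^1$-a.e.\ $t\in\R$, and that $\RVar(\lambda u;A)=\lambda\,\RVar(u;A)$, respectively $\RVar(u+c;A)=\RVar(u;A)$, for every $A\in\mathscr T$ --- the last identities being precisely those of \cref{res:basic_props_varR}. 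Since $\lambda\,|\RD u|$, respectively $|\RD u|$, is a finite outer regular Borel measure agreeing on $\mathscr T$ with the relative variation just computed, \cref{def:per_var_measures} then gives that $\lambda u$, respectively $u+c$, has finite variation measure, with $|\RD(\lambda u)|=\lambda\,|\RD u|$, respectively $|\RD(u+c)|=|\RD u|$. For~(iii), the superlevel set $\{c>t\}$ equals $X$ when $t<c$ and $\emptyset$ when $t\ge c$, so~\ref{propR:empty} and~\ref{propR:space} yield $\RVar(c;A)=0$ for every $A\in\mathscr T$; since the null measure is a finite outer regular Borel measure agreeing with $\RVar(c;\,\cdot\,)$ on $\mathscr T$, constant functions have finite variation measure with $|\RD c|=0$, and in particular $0\in\RBV(X,\m)$.

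Item~(iv) is then immediate: for a fixed $A\in\mathscr T$, applying the lower-semicontinuity assertion of \cref{res:basic_props_varR} to the $L^1$-convergent sequence $u_k\to u$ --- using that $u,u_k\in\RBV(X,\m)$, so that $|\RD u|(A)=\RVar(u;A)$ and $|\RD u_k|(A)=\RVar(u_k;A)$ --- gives $|\RD u|(A)\le\liminf_{k\to+\infty}|\RD u_k|(A)$.

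Finally, for~(v) I would bring in property~\ref{propR:BV_cone}, which guarantees $u+v\in\RBV(X,\m)$, so that $|\RD(u+v)|$ is a genuine finite outer regular Borel measure. Combining the convexity of $\RVar(\,\cdot\,;A)$ from \cref{res:convex_varR} (legitimate since~\ref{propR:empty}, \ref{propR:space}, \ref{propR:sub-mod} and~\ref{propR:lsc} are in force) with the positive $1$-homogeneity of $\RVar(\,\cdot\,;A)$ (cf.~\cref{res:basic_props_varR}), one obtains, for every $A\in\mathscr T$, the subadditivity $\RVar(u+v;A)=2\,\RVar\bigl(\tfrac{u+v}{2};A\bigr)\le\RVar(u;A)+\RVar(v;A)$, that is, $|\RD(u+v)|(A)\le|\RD u|(A)+|\RD v|(A)$ on open sets; the outer regularity comparison recalled at the outset then upgrades this to the asserted inequality between Borel measures. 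The one point requiring genuine care, though it conceals no real difficulty, is the measure-theoretic bookkeeping: verifying that the ``a.e.\ in $t$'' finiteness of $\RP(\{u>t\};\,\cdot\,)$ survives the affine reparametrizations, and that every object produced along the way is indeed a finite outer regular Borel measure, so that the open-set comparison principle genuinely applies.
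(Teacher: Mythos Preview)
Your proposal is correct and follows essentially the same approach the paper intends: the paper omits the proof entirely, simply directing the reader to derive the statements from \cref{res:basic_props_varR} and \cref{res:convex_varR} (together with the proof of \cref{res:basic_props_var}), which is precisely what you do. One small remark: in items~(i)--(ii) the change of variables in the coarea integral~\eqref{eq:coarea_relative_def} actually yields $\RVar(\lambda u;A)=\lambda\,\RVar(u;A)$ and $\RVar(u+c;A)=\RVar(u;A)$ for \emph{every} $A\in\mathscr B(X)$, not just open ones, which directly shows that $\RVar(\lambda u;\,\cdot\,)$ and $\RVar(u+c;\,\cdot\,)$ are finite outer regular Borel measures --- so the detour through the outer-regularity comparison principle, while harmless, is unnecessary there (it is genuinely needed only in~(v)).
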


\subsubsection{Chain rule}

We now establish a chain rule for the variation measure of continuous functions. 
To this aim, we need to assume the following \emph{locality property} of the relative perimeter functional in~\eqref{eq:def_relative_perimeter}: 
\begin{enumerate}[label=(RP.L)]
\item
\label{propR:local}
$E\in\mathscr T\implies \RP(E;A)=0$
for all $A\in\mathscr B(X)$ with $\RP(E;A\cap\partial E)=0$.
\end{enumerate}
Loosely speaking, property~\ref{propR:local} states that, for any open set $E\subset X$, the relative perimeter functional $A\mapsto \RP(E;A)$ is supported (in a measure-theoretic sense) on the topological boundary~$\partial E$ of the set~$E$.

\begin{theorem}[Chain rule]
\label{res:chain_rule}
Let properties~\ref{propR:empty}, \ref{propR:space} and~\ref{propR:local} be in force and let $\phi\in \mathrm{C}^1(\R)$ be a strictly increasing function.
If $u\in \mathrm{C}^0(X)$ has finite variation measure, then also $\phi(u)\in \mathrm{C}^0(X)$ has finite variation measure, with
\begin{equation}
\label{eq:chain_rule}
|\RD\phi(u)|=\phi'(u)|\RD u|
\end{equation}
as finite outer regular Borel measure on $X$.
\end{theorem}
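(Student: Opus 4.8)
The plan is to establish the chain rule \eqref{eq:chain_rule} by verifying equality of the two outer regular Borel measures $|\RD\phi(u)|$ and $\phi'(u)|\RD u|$ on open sets, and then invoke outer regularity to conclude equality on all of $\mathscr B(X)$. The starting point is the coarea formula: since $\phi$ is a strictly increasing $\mathrm C^1$ diffeomorphism onto its image, for each $t$ in the range of $u$ we have $\set{\phi(u)>t}=\set{u>\phi^{-1}(t)}$, so a change of variables $s=\phi^{-1}(t)$, $\de t=\phi'(s)\,\de s$, together with \cref{res:coarea_gen}, formally gives
\begin{equation*}
\RVar(\phi(u);A)=\int_\R \RP(\set*{u>s};A)\,\phi'(s)\,\de s
=\int_\R\int_A \phi'(s)\,\de|\RD\chi_{\set*{u>s}}|\,\de s.
\end{equation*}
The issue is that the integrand should be $\phi'(u)$ rather than the constant $\phi'(s)$, and this is precisely where the locality property~\ref{propR:local} enters: on the reduced-boundary-type set where $|\RD\chi_{\set*{u>s}}|$ is concentrated, namely (by continuity of $u$) inside $\partial\set{u>s}\subset\set{u=s}$, we have $u=s$, hence $\phi'(u)=\phi'(s)$ $|\RD\chi_{\set*{u>s}}|$-a.e. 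So the first key step is to show that, for a continuous $u$ with finite variation measure, $|\RD\chi_{\set*{u>s}}|$ is carried by $\set{u=s}$: since $\set{u>s}$ is open (by continuity of $u$), property~\ref{propR:local} applied to $E=\set{u>s}$ says $P(E;A)=0$ whenever $P(E;A\cap\partial E)=0$, i.e.\ the measure $|\RD\chi_{\set*{u>s}}|=\RP(\set{u>s};\cdot)$ is supported on $\partial\set{u>s}\subset\set{u=s}$. Here one must first check that $\phi(u)$ indeed has finite variation measure, which follows because $\RVar(\phi(u);X)=\int_\R\RP(\set*{u>s};X)\phi'(s)\,\de s$ is finite (as $\phi'$ is locally bounded and $u$, being in $\RBV$, has $\set{u>s}$ of finite perimeter for a.e.\ $s$ with integrable perimeter — one restricts attention to the range of $u$), that $\RP(\phi(u);\cdot)$ is a Borel measure by the same coarea identity, and that it is outer regular because it is obtained by integrating outer regular measures.

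The second key step is to assemble these into equality of measures. Once we know $\phi'(u)=\phi'(s)$ on the support of $|\RD\chi_{\set*{u>s}}|$, the displayed coarea computation becomes, for every open $A$,
\begin{equation*}
|\RD\phi(u)|(A)=\int_\R\int_A \phi'(u)\,\de|\RD\chi_{\set*{u>s}}|\,\de s
=\int_A\phi'(u)\,\de\!\left(\int_\R|\RD\chi_{\set*{u>s}}|\,\de s\right)
=\int_A\phi'(u)\,\de|\RD u|,
\end{equation*}
where the middle equality is Tonelli (everything is nonnegative and measurable) and the last is again \cref{res:coarea_gen}. Thus $|\RD\phi(u)|$ and $\phi'(u)|\RD u|$ agree on $\mathscr T$; since both are finite outer regular Borel measures on $X$, they agree on all of $\mathscr B(X)$, which is \eqref{eq:chain_rule}. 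The continuity of $\phi(u)$ is immediate as a composition of continuous functions.

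The main obstacle, and the place requiring genuine care rather than routine bookkeeping, is the support statement for $|\RD\chi_{\set*{u>s}}|$ and the passage from "$\phi'(s)$ constant" to "$\phi'(u)$ pointwise" inside the $s$-integral. Property~\ref{propR:local} is stated only for open sets $E$, which is why the continuity hypothesis on $u$ is essential: it guarantees each superlevel set $\set{u>s}$ is open and that $\partial\set{u>s}\subset\set{u=s}$, so that $\phi'$ evaluated at $u$ is genuinely the constant $\phi'(s)$ on the relevant carrier. One subtlety to handle carefully is null sets of levels $s$ for which $\set{u>s}$ fails to have finite perimeter (a set of $\mathscr L^1$-measure zero by the finite-variation-measure hypothesis), and levels outside the range of $u$ (where $\set{u>s}$ is either $\emptyset$ or $X$ and contributes zero by \ref{propR:empty}–\ref{propR:space}); these do not affect any of the integrals. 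Everything else — measurability of the integrands, Tonelli, outer regularity of the integrated measure — is standard and can be cited from \cite{AFP00book}.
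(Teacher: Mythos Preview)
Your proposal is correct and follows essentially the same route as the paper's proof: rewrite the superlevel sets of $\phi(u)$ via the inverse $\phi^{-1}$, change variables in the coarea integral, use the continuity of $u$ together with~\ref{propR:local} to see that $|\RD\chi_{\set{u>s}}|$ is supported in $\set{u=s}$ (so $\phi'(s)$ can be replaced by $\phi'(u)$), and then invoke the generalized coarea formula \cref{res:coarea_gen}. The only cosmetic difference is that you first restrict to open $A$ and then pass to Borel sets by outer regularity, whereas the paper carries out the computation directly for arbitrary $A\in\mathscr B(X)$; since every step of your argument already works for Borel $A$, that detour is unnecessary but harmless.
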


\begin{proof}
Since $\phi$ is strictly increasing, its inverse function $\phi^{-1}\colon\phi(\R)\to\R$ is well defined, continuous and strictly increasing, and we can write
\begin{equation*}
\set*{\phi(u)>t}
=
\begin{cases}
X,
&\text{if}\ t\le\inf\phi(\R),
\\
\set*{u>\phi^{-1}(t)},
\quad &
\text{if}\ t\in\phi(\R),
\\
\emptyset,
&
\text{if}\ t\ge\sup\phi(\R). 
\end{cases}
\end{equation*}
Therefore, the set $\set*{\phi(u)>t}$ has finite perimeter measure for $\mathscr L^1$-a.e.\ $t\in\R$, with
\begin{equation*}
|\RD\chi_{\set*{\phi(u)>t}}|
=
\begin{cases}
|\RD\chi_{\set*{u>\phi^{-1}(t)}}|,
\quad &
\text{if}\ t\in\phi(\R),
\\
0 ,
&\text{if}\ t\notin\phi(\R).
\end{cases}
\end{equation*}
Hence, given $A\in\mathscr B(X)$, we have
\begin{equation*}
t\mapsto|\RD\chi_{\set*{\phi(u)>t}}|(A)
=
|\RD\chi_{\set*{u>\phi^{-1}(t)}}|(A)\,\chi_{\phi(\R)}(t)
\in L^1(\R)
\end{equation*}
and so,
\begin{equation*}
\RVar(\phi(u);A)
=
\int_\R|\RD\chi_{\set*{\phi(u)>t}}|(A)\,\de t
=
\int_{\phi(\R)}
|\RD\chi_{\set*{u>\phi^{-1}(t)}}|(A)\,\de t.
\end{equation*}
Performing a change of variables, we can write 
\begin{equation*}
\int_{\phi(\R)}
|\RD\chi_{\set*{u>\phi^{-1}(t)}}|(A)\,\de t
=
\int_\R|\RD\chi_{\set*{u>s}}|(A)\,\phi'(s)\,\de s.
\end{equation*}
Now, since $u\in \mathrm{C}^0(X)$, we know that $\set*{u>s}\in\mathscr T$ and $\partial\set*{u>s}\subset\set*{u=s}$ for all $s\in\R$. 
Therefore, because of~\ref{propR:local}, we have 
$|\RD\chi_{\set*{u>s}}|(B)=0$ for all $B\in\mathscr B(X)$ such that 
$|\RD\chi_{\set*{u>s}}|(B\cap\set*{u=s})=0$.

Thus, letting $B=A\cap\set*{u\ne s}$, we have that $|\RD\chi_{\set*{u>s}}|(A\cap\set*{u\ne s})=0$ for all $s\in\R$; hence, the following equalities hold
\begin{align*}
\int_\R|\RD\chi_{\set*{u>s}}|(A)\,\phi'(s)\,\de s
&=
\int_\R\phi'(s)\int_A \de |\RD\chi_{\set*{u>s}}|\,\de s
\\
&=
\int_\R\int_A \phi'(u)\,\de |\RD\chi_{\set*{u>s}}|\,\de s.
\end{align*}
By \cref{res:coarea_gen}, we can write
\begin{equation*}
\int_\R\int_A \phi'(u)\,\de |\RD\chi_{\set*{u>s}}|\,\de s
=
\int_A\phi'(u)\,\de |\RD u|,
\end{equation*}
so that, by combining all the above equalities, we conclude that 
\begin{equation*}
\RVar(\phi(u);A)
=
\int_A\phi'(u)\,\de |\RD u|
\end{equation*}
for all $A\in\mathscr B(X)$, proving~\eqref{eq:chain_rule} and completing the proof.  
\end{proof}

\subsubsection{\texorpdfstring{$p$}{p}-slope and Sobolev functions}

\label{subsec:sobolev_1_space}

As customary, we let 
\begin{equation*}
\RW^{1,1}(X,\m)=\set*{u\in \RBV(X,\m) : |\RD u|\ll\m}
\end{equation*}
be the set of \emph{Sobolev $\RW^{1,1}$ functions} on~$X$. 

If $u\in\RW^{1,1}(X,\m)$, then we let $|\nabla u|\in L^1(X,\m)$, $|\nabla u|\ge0$ $\m$-a.e.\ in~$X$, be the \emph{$1$-slope} of~$u$, i.e., the unique $L^1(X,\m)$ function such that 
\begin{equation*}
|\RD u|(A)
=
\int_A|\nabla u|\,\de \m
\qquad
\text{for all}\ A\in\mathscr B(X).
\end{equation*}
From \cref{res:basic_props_var_meas}, we immediately deduce the following simple properties of $1$-slopes of $\RW^{1,1}$ functions. 

\begin{corollary}
[Basic properties of $1$-slope]
\label{res:basic_props_1_slope}
Let properties~\ref{propR:empty}, \ref{propR:space}, \ref{propR:sub-mod}, \ref{propR:lsc}, and~\ref{propR:BV_cone} be in force.
The following hold:
\begin{enumerate}[label=(\roman*)]

\item 
if $u\in\RW^{1,1}(X,\m)$, then $\lambda u\in\RW^{1,1}(X,\m)$, with $|\nabla(\lambda u)|=\lambda|\nabla u|$, for all $\lambda>0$;

\item
$0\in\RW^{1,1}(X,\m)$, with $|\nabla 0|=0$; 

\item 
if $u,v\in\RW^{1,1}(X,\m)$, then $u+v\in\RW^{1,1}(X,\m)$ with 
\begin{equation*}
|\nabla(u+v)|\le|\nabla u|+|\nabla v|.
\end{equation*}
\end{enumerate}
As a consequence, $\RW^{1,1}(X,\m)$ is a convex cone in $L^1(X,\m)$.
\end{corollary}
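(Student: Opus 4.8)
The plan is to deduce each of the three items from the corresponding item of \cref{res:basic_props_var_meas}, using only the defining property that $|\RD u|=|\nabla u|\,\m$ on all Borel sets for a $\RW^{1,1}$ function. The underlying principle is that the Radon--Nikodym density is uniquely determined $\m$-a.e., so any measure-level identity or inequality between variation measures that are all absolutely continuous with respect to $\m$ transfers verbatim to the corresponding densities.

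\begin{proof}
\textit{Proof of~(i)}. Let $u\in\RW^{1,1}(X,\m)$, so $u$ has finite variation measure with $|\RD u|\ll\m$ and density $|\nabla u|\in L^1(X,\m)$. By \cref{res:basic_props_var_meas}\,(i), for $\lambda>0$ the function $\lambda u$ has finite variation measure with $|\RD(\lambda u)|=\lambda|\RD u|$. Hence, for every $A\in\mathscr B(X)$,
\begin{equation*}
|\RD(\lambda u)|(A)=\lambda|\RD u|(A)=\lambda\int_A|\nabla u|\,\de\m=\int_A\lambda|\nabla u|\,\de\m,
\end{equation*}
which shows $|\RD(\lambda u)|\ll\m$, so $\lambda u\in\RW^{1,1}(X,\m)$, and by uniqueness of the density $|\nabla(\lambda u)|=\lambda|\nabla u|$ $\m$-a.e.

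\textit{Proof of~(ii)}. By \cref{res:basic_props_var_meas}\,(iii), the constant function $0$ has finite variation measure with $|\RD 0|=0$, the null measure, which is trivially absolutely continuous with respect to~$\m$ with density $0$; hence $0\in\RW^{1,1}(X,\m)$ and $|\nabla 0|=0$ $\m$-a.e.

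\textit{Proof of~(iii)}. Let $u,v\in\RW^{1,1}(X,\m)\subset\RBV(X,\m)$. By property~\ref{propR:BV_cone}, $u+v\in\RBV(X,\m)$, i.e.\ $u+v$ has finite variation measure. By \cref{res:basic_props_var_meas}\,(v),
\begin{equation*}
|\RD(u+v)|\le|\RD u|+|\RD v|
\end{equation*}
as outer regular Borel measures on~$X$. Since both $|\RD u|=|\nabla u|\,\m$ and $|\RD v|=|\nabla v|\,\m$ are absolutely continuous with respect to~$\m$, so is their sum, and therefore $|\RD(u+v)|\ll\m$; thus $u+v\in\RW^{1,1}(X,\m)$. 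Writing the above inequality on an arbitrary $A\in\mathscr B(X)$ and using the densities,
\begin{equation*}
\int_A|\nabla(u+v)|\,\de\m=|\RD(u+v)|(A)\le\int_A\bigl(|\nabla u|+|\nabla v|\bigr)\,\de\m.
\end{equation*}
Since this holds for every Borel set $A$, a standard argument (test against $A=\set{|\nabla(u+v)|>|\nabla u|+|\nabla v|}$, a Borel set since all slopes are $\m$-measurable) yields $|\nabla(u+v)|\le|\nabla u|+|\nabla v|$ $\m$-a.e.

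Finally, items~(i), (ii) and~(iii) together show that $\RW^{1,1}(X,\m)$ is closed under positive scalar multiplication and under addition and contains $0$, hence it is a convex cone in $L^1(X,\m)$.
\end{proof}

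The only genuinely non-routine point is recognizing that the sum $u+v$ of two $\RW^{1,1}$ functions need not a priori lie in $\RBV(X,\m)$ at all --- this is exactly why property~\ref{propR:BV_cone} must be assumed --- and that, once it does, absolute continuity of $|\RD(u+v)|$ is automatic by domination; everything else is the standard passage from measures to Radon--Nikodym densities.
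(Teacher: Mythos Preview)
Your proof is correct and follows exactly the approach the paper intends: the paper does not actually write out a proof of this corollary, stating only that it is an immediate consequence of \cref{res:basic_props_var_meas}, and your argument makes precisely that deduction, passing from the measure-level statements to the Radon--Nikodym densities.
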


Having the notion of $1$-slope at our disposal, following the standard approach about slopes (see~\cite{AGS14} for instance), we can introduce the notion of \emph{$p$-relaxed $1$-slope}, for $p\in(1,+\infty)$.

\begin{definition}[$p$-relaxed $1$-slope]
\label{def:p_relaxed_1_slope}
Let $p\in(1,+\infty)$.
We shall say that a function $g\in L^p(X,\m)$ is a \emph{$p$-relaxed $1$-slope} of $u\in L^p(X,\m)$ if there exist a function $\tilde g\in L^p(X,\m)$ and a sequence $\{u_k\}_{k\in\N}\subset\RW^{1,1}(X,\m)\cap L^p(X,\m)$ such that: 
\begin{enumerate}[label=(\roman*)]

\item 
$u_k\conv*{}{} u$ in $L^p(X,\m)$;

\item\label{item:def_relaxed_1_slope_weak_conv}
$|\nabla u_k|\in L^p(X,\m)$ for all $k\in\N$ and $|\nabla u_k|\wkconv*{}{}\tilde g$ weakly in $L^p(X,\m)$;

\item
$\tilde g\le g$ $\m$-a.e.\ in~$X$.

\end{enumerate}
\end{definition}

Clearly, according to \cref{def:p_relaxed_1_slope} and thanks to the sequential compactness of weak topologies, if $\{u_k\}_{k\in\N}\subset\RW^{1,1}(X,\m)\cap L^p(X,\m)$ is such that 
\begin{equation*}
\sup_{k\in\N}\int_X|\nabla u_k|^p\,\de \m<+\infty,
\end{equation*}
then any $L^p(X,\m)$-limit of $\{u_k\}_{k\in\N}$ has at least one $p$-relaxed $1$-slope. Given any $u\in L^p(X,\m)$, we define
\begin{equation*}
\mathsf{Slope}_p(u)
=
\set*{g\in L^p(X,\m): \text{$g$ is a $p$-relaxed $1$-slope of $u$}}.
\end{equation*}
Following the point of view of~\cite{AGS14}, one can prove the following basic properties of $p$-relaxed $1$-slopes that will be useful in the sequel.

\begin{lemma}[Basic properties of $p$-relaxed $1$-slope]
\label{res:basic_props_p_relaxed_1_slope}
Let properties~\ref{propR:empty}, \ref{propR:space}, \ref{propR:sub-mod}, \ref{propR:lsc}, and~\ref{propR:BV_cone} be in force and let $p\in(1,+\infty)$.
The following hold:
\begin{enumerate}[label=(\roman*)]

\item\label{item:basic_props_p_relaxed_1_slope_convex} 
$\mathsf{Slope}_p(u)$ is a convex subset (possibly empty) for all $u\in L^p(X,\m)$;

\item\label{item:basic_props_p_relaxed_1_slope_strong_conv} 
if $u\in L^p(X,\m)$ and $g\in \mathsf{Slope}_p(u)$, then there exist a sequence $\{u_k\}_k\subset \RW^{1,1}(X,\m)\cap L^p(X,\m)$, a sequence $\{g_k\}_k \subset L^p(X,\m)$, and a function $\tilde g\in L^p(X,\m)$, such that $u_k\conv*{}{} u$ and $g_k\conv*{}{} \tilde g$ both in $L^p(X,\m)$, with $|\nabla u_k|\le g_k$ for all $k\in\N$ and $\tilde g\le g$;

\item\label{item:basic_props_p_relaxed_1_slope_lsc_weak}
if $\{u_k\}_k$ and $\{g_k\}$ are sequences in $L^p(X,\m)$, with $g_k\in\mathsf{Slope}_p(u_k)$ for all $k\in\N$, such that $u_k\wkconv*{}{} u$ and $g_k\wkconv*{}{} g$ weakly in $L^p(X,\m)$, then $g\in\mathsf{Slope}_p(u)$.

\end{enumerate}
\end{lemma}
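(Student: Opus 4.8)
The plan is to prove the three statements in order, each following the template of the corresponding result for relaxed slopes in metric measure spaces, e.g. in~\cite{AGS14}, but simplified by the fact that here the ambient approximating class $\RW^{1,1}(X,\m)$ is a convex cone (\cref{res:basic_props_1_slope}) and the $1$-slope is subadditive.

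\textbf{Proof of~\ref{item:basic_props_p_relaxed_1_slope_convex}.} First I would fix $u\in L^p(X,\m)$, take $g_0,g_1\in\mathsf{Slope}_p(u)$ and $\theta\in[0,1]$, and produce witnesses: by \cref{def:p_relaxed_1_slope} there are sequences $\{u_k^0\}_k,\{u_k^1\}_k\subset\RW^{1,1}(X,\m)\cap L^p(X,\m)$ converging to $u$ in $L^p$, with $|\nabla u_k^j|\wkconv*{}{}\tilde g_j\le g_j$ weakly in $L^p$. The natural candidate witnessing $\theta g_1+(1-\theta)g_0\in\mathsf{Slope}_p(u)$ would be a diagonal-type combination; the subtlety is that $\theta u_k^1+(1-\theta)u_k^0$ converges to $u$ but its $1$-slope is only controlled by $\theta|\nabla u_k^1|+(1-\theta)|\nabla u_k^0|$ via \cref{res:basic_props_1_slope}, which weakly converges to $\theta\tilde g_1+(1-\theta)\tilde g_0\le\theta g_1+(1-\theta)g_0$. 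Passing a weakly convergent subsequence of $\theta|\nabla u_k^1|+(1-\theta)|\nabla u_k^0|$ (bounded in $L^p$, hence relatively weakly compact) gives the required $\tilde g$, and convexity follows. Emptiness is allowed by the statement, so no existence issue arises.

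\textbf{Proof of~\ref{item:basic_props_p_relaxed_1_slope_strong_conv} (Mazur-type strengthening).} Here the point is to upgrade weak convergence of slopes to strong convergence of a new sequence of $L^p$ majorants. I would start from $g\in\mathsf{Slope}_p(u)$ with witnesses $\{u_k\}_k$ and $|\nabla u_k|\wkconv*{}{}\tilde g\le g$. Since $|\nabla u_k|\wkconv*{}{}\tilde g$ in $L^p$, Mazur's lemma yields, for each $k$, a finite convex combination $h_k:=\sum_{i\ge k}\alpha_i^k|\nabla u_i|$ with $h_k\conv*{}{}\tilde g$ strongly in $L^p$. Setting $v_k:=\sum_{i\ge k}\alpha_i^k u_i$, convexity of the cone $\RW^{1,1}(X,\m)$ and subadditivity of the $1$-slope give $v_k\in\RW^{1,1}(X,\m)\cap L^p(X,\m)$ with $|\nabla v_k|\le h_k=:g_k$, while $v_k\conv*{}{} u$ in $L^p$ because each $u_i\conv*{}{}u$ and the combinations are convex with indices $\ge k$. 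Taking $g_k$ as just defined and $\tilde g$ as above completes this part.

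\textbf{Proof of~\ref{item:basic_props_p_relaxed_1_slope_lsc_weak} (weak closure).} This is the main obstacle: it is essentially a diagonal argument combined with a metrizability/separability reduction. Given $u_k\wkconv*{}{}u$, $g_k\wkconv*{}{}g$ with $g_k\in\mathsf{Slope}_p(u_k)$, I would first apply~\ref{item:basic_props_p_relaxed_1_slope_strong_conv} to each $k$ to get, for every $\eps>0$, an element $w_k\in\RW^{1,1}(X,\m)\cap L^p(X,\m)$ and $\tilde g_k\in L^p(X,\m)$ with $\|w_k-u_k\|_p+\|\tilde g_k-g_k\|_p<\eps$ in a suitable sense — more precisely, using a diagonal choice, select $w_k$ with $\|w_k-u_k\|_p\to0$ and $|\nabla w_k|\le\hat g_k$ where $\|\hat g_k-g_k\|_p\to0$. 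Then $w_k\wkconv*{}{}u$ and $\hat g_k\wkconv*{}{}g$ weakly in $L^p$. Since $\{|\nabla w_k|\}_k$ is bounded in $L^p$, up to a subsequence $|\nabla w_k|\wkconv*{}{}\tilde g$ for some $\tilde g\le g$ (the inequality from $|\nabla w_k|\le\hat g_k$ and weak lower-order comparison, i.e. testing against nonnegative $L^{p'}$ functions). Now $\{w_k\}_k\subset\RW^{1,1}(X,\m)\cap L^p(X,\m)$ converges only weakly to $u$; to conclude $g\in\mathsf{Slope}_p(u)$ I need strong convergence, so I would apply Mazur's lemma once more to pass to convex combinations $\bar w_k$ of the $w_k$'s, which converge strongly to $u$ in $L^p$ (by convexity of $\RW^{1,1}$ and of $L^p$ weak limits), with $|\nabla\bar w_k|\le$ the corresponding convex combination of $|\nabla w_k|$, whose weak limit is still $\le g$; this exhibits the required witnesses and shows $g\in\mathsf{Slope}_p(u)$. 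The delicate bookkeeping is ensuring the two Mazur passages (one to fix the weak-to-strong gap in the $u_k$'s, one inside the definition) are compatible and that all index shifts keep the limits intact; this is where I expect to spend the most care.
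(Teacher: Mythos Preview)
Your approach is correct and matches what the paper intends: the paper does not give its own proof of this lemma, stating only that ``following the point of view of~\cite{AGS14}, one can prove the following basic properties,'' and your argument is precisely the AGS14 template (convexity via subadditivity of the $1$-slope on the cone $\RW^{1,1}$, Mazur's lemma for the strong-approximation upgrade, and a diagonal-plus-Mazur argument for weak closure). Two minor slips to clean up when you write it out: in~\ref{item:basic_props_p_relaxed_1_slope_convex} the subsequence you extract should be of $|\nabla v_k|$, not of the already-convergent upper bound $\theta|\nabla u_k^1|+(1-\theta)|\nabla u_k^0|$; and in~\ref{item:basic_props_p_relaxed_1_slope_lsc_weak} the diagonal choice gives $\|\hat g_k-\tilde g_k\|_p\to0$ with $\tilde g_k\le g_k$, not $\|\hat g_k-g_k\|_p\to0$ directly, so you pass through a subsequence of the bounded $\{\tilde g_k\}$ to get a weak limit $\le g$ before running the second Mazur step on the pair $(w_k,|\nabla w_k|)$ in $L^p\times L^p$.
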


Under the assumptions of the above \cref{res:basic_props_p_relaxed_1_slope}, for each $u\in L^p(X,\m)$, the set $\mathsf{Slope}_p(u)$ is a (possibly empty) closed convex subset of $L^p(X,\m)$, and thus, the following definition is well posed. 

\begin{definition}[Weak $p$-slope]
\label{def:weak_p_slope}
Let $p\in(1,+\infty)$ and let properties~\ref{propR:empty}, \ref{propR:space}, \ref{propR:sub-mod}, \ref{propR:lsc}, and~\ref{propR:BV_cone} be in force.
If $u\in L^p(X,\m)$ is such that $\mathsf{Slope}_p(u)\ne\emptyset$, we let $|\nabla u|_p$ be the element of $\mathsf{Slope}_p(u)$ of minimal  $L^p(X,\m)$ norm and we call it the \emph{weak $p$-slope} of~$u$.
Finally, we let
\begin{equation*}
\RW^{1,p}(X,\m)=
\set*{
u\in L^p(X,\m): \exists|\nabla u|_p\in L^p(X,\m)
}.
\end{equation*}
\end{definition}

Following the same line of~\cites{AGS14}, one can show that the weak $p$-slope can be actually approximated in $L^p(X,\m)$ in the strong sense. 

\begin{corollary}[Strong approximation of weak $p$-slope]
\label{res:strong_approx_weak_p_slope}
Let $p\in(1,+\infty)$ and let properties~\ref{propR:empty}, \ref{propR:space}, \ref{propR:sub-mod}, \ref{propR:lsc}, and~\ref{propR:BV_cone} be in force.
If $u\in\RW^{1,p}(X,\m)$, then there exists a sequence $\{u_k\}_{k}\subset\RW^{1,1}(X,\m)\cap L^p(X,\m)$ such that $|\nabla u_k|\in L^p(X,\m)$ for all $k\in\N$ and 
\begin{equation*}
u_k\conv*{}{} u\ \text{and}\ |\nabla u_k|\conv*{}{}|\nabla u|_p\ \text{both in}\ L^p(X,\m)\ \text{as}\ k\to+\infty.
\end{equation*}
\end{corollary}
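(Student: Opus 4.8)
The plan is to run the by-now standard relaxation argument for slopes (as in~\cite{AGS14}), the key analytic input being the uniform convexity of $L^p(X,\m)$ for $p\in(1,+\infty)$. First, writing $g:=|\nabla u|_p\in\mathsf{Slope}_p(u)$ (which exists since $u\in\RW^{1,p}(X,\m)$), I would invoke part~(ii) of \cref{res:basic_props_p_relaxed_1_slope} applied to the relaxed $1$-slope $g$: this produces $\{u_k\}_k\subset\RW^{1,1}(X,\m)\cap L^p(X,\m)$, $\{g_k\}_k\subset L^p(X,\m)$ and $\tilde g\in L^p(X,\m)$ with $u_k\conv*{}{}u$ and $g_k\conv*{}{}\tilde g$ strongly in $L^p(X,\m)$, $|\nabla u_k|\le g_k$ $\m$-a.e.\ for all $k$, and $\tilde g\le g$ $\m$-a.e. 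At this stage the first convergence claimed in the statement is already in hand, and $\{|\nabla u_k|\}_k$ is bounded in $L^p(X,\m)$, being dominated by the $L^p$-convergent sequence $\{g_k\}_k$.

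Next I would identify the weak limit of the slopes. Extracting (without relabelling) a subsequence along which $|\nabla u_k|\wkconv*{}{}h$ weakly in $L^p(X,\m)$ for some $h\ge0$, I would first observe, by testing against nonnegative $L^{p'}(X,\m)$ functions and using the strong convergence $g_k\conv*{}{}\tilde g$, that $h\le\tilde g\le g$ $\m$-a.e. Since $u_k\conv*{}{}u$ (hence also weakly) and each $|\nabla u_k|$ is a $p$-relaxed $1$-slope of $u_k$ (trivially, via the constant sequence), part~(iii) of \cref{res:basic_props_p_relaxed_1_slope} gives $h\in\mathsf{Slope}_p(u)$. The minimality in \cref{def:weak_p_slope} then forces $\|g\|_p\le\|h\|_p$, while weak lower semicontinuity of the norm together with the pointwise bounds gives the chain $\|g\|_p\le\|h\|_p\le\liminf_{k\to+\infty}\bigl\||\nabla u_k|\bigr\|_p\le\limsup_{k\to+\infty}\bigl\||\nabla u_k|\bigr\|_p\le\limsup_{k\to+\infty}\|g_k\|_p=\|\tilde g\|_p\le\|g\|_p$. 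Hence all the terms coincide; since $0\le h\le g$ $\m$-a.e.\ and $\|h\|_p=\|g\|_p$ with $p<+\infty$, this forces $h=g$ $\m$-a.e. As the limit does not depend on the chosen subsequence, the whole sequence satisfies $|\nabla u_k|\wkconv*{}{}g$ weakly in $L^p(X,\m)$, and the displayed chain also yields $\bigl\||\nabla u_k|\bigr\|_p\to\|g\|_p$.

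Finally, I would upgrade weak to strong convergence: in the uniformly convex space $L^p(X,\m)$ with $p\in(1,+\infty)$, the combination of $|\nabla u_k|\wkconv*{}{}g$ and $\bigl\||\nabla u_k|\bigr\|_p\to\|g\|_p$ implies $|\nabla u_k|\conv*{}{}g=|\nabla u|_p$ strongly in $L^p(X,\m)$ (Radon--Riesz property). Together with $u_k\conv*{}{}u$ this is exactly the assertion. The main obstacle, to my mind, is the identification $h=g$: it is where the defining minimality of the weak $p$-slope must be played against the stability of $p$-relaxed $1$-slopes under weak $L^p$-limits furnished by \cref{res:basic_props_p_relaxed_1_slope}; once that is secured, the passage from a subsequence to the full sequence is automatic and the concluding step is a routine use of uniform convexity. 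No hypothesis on the relative perimeter beyond those already ensuring that \cref{def:weak_p_slope} makes sense is required.
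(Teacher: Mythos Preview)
Your argument is correct and is precisely the relaxation argument of~\cite{AGS14} that the paper itself defers to; in fact the paper does not give a proof of this corollary at all, merely stating that ``following the same line of~\cite{AGS14}'' the strong approximation holds, so you have faithfully supplied the omitted details.
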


\section{Cheeger sets in perimeter-measure spaces}
\label{sec:cheeger_sets_in_PM_spaces}

In this section, we work in a measure space endowed with a perimeter functional as in \cref{ssec:perimeter_functional}.

\subsubsection{\texorpdfstring{$N$}{N}-Cheeger constant and \texorpdfstring{$N$}{N}-Cheeger sets}

We begin by introducing the central notions of the present paper.

\begin{definition}
Let $N\in\mathbb{N}$. An \emph{$N$-cluster} $\mathcal{E}$ is a collection of $N$ measurable sets $\mathcal{E}=\{\mathcal{E}(i)\}_{i=1}^N \subset \A$ satisfying:
\begin{itemize}
    \item $0<\m(\mathcal{E}(i))<+\infty$ for all $i=1,\ldots, N$;
    \item $\m(\mathcal{E}(i)\cap \mathcal{E}(j))=0$ for all $i,j=1,\ldots, N$ with $i\neq j$;
    \item $P(\mathcal{E}(i))<+\infty$ for all $i=1,\ldots, N$.
\end{itemize}
Each of the $\mathcal{E}(i)$, $i=1,\dots,N$, is called a \emph{chamber}.
\end{definition}

\begin{definition}[$N$-admissible set]
\label{def:admissible_set}
Let $N\in\mathbb{N}$. We say that $\Omega\in\A$ is \emph{N-admissible} if there exists an $N$-cluster $\mathcal{E}=\{\mathcal{E}(i)\}_{i=1}^N\subset \Omega$. 
\end{definition}

\begin{remark}\label{rem:Nadm=>Madm}
Let $N\in\mathbb{N}$. Trivially, if $\Omega\in \A$ is $N$-admissible, then it is $M$-admissible for all integers $M\le N$.
\end{remark}

\begin{definition}[$N$-Cheeger constant and $N$-Cheeger sets]
Let $N\in\mathbb{N}$ and let $\Omega\in \mathscr{A}$ be an $N$-admissible set.
The \emph{$N$-Cheeger constant} of $\Omega$ is
\begin{equation*}
h_N(\Omega)=\inf\set*{\sum_{i=1}^N\frac{P(\mathcal{E}(i))}{\m(\mathcal{E}(i))} :\ \mathcal{E}=\{\mathcal{E}(i)\}_{i=1}^N\subset \Omega\ \text{is an $N$-cluster}}.
\end{equation*}
If $\mathcal{C}=\{\mathcal{C}(i)\}_{i=1}^N$ is an $N$-cluster realizing the above infimum, we call it an $N$-Cheeger set (or cluster) of $\Omega$. We let $\C_N(\Omega)$ be the collection of all $N$-Cheeger sets of $\Omega$.
\end{definition}

\begin{remark}
By definition, as $\Omega$ is required to be $N$-admissible, the $N$-Cheeger constant of $\Omega$ is finite. Moreover, by \cref{rem:Nadm=>Madm}, so it is $h_M(\Omega)$ for all integers $M$ such that $M\le N$. We also refer to \cref{prop:ineq_M_N_constants}.
\end{remark}

\subsection{Existence of \texorpdfstring{$N$}{N}-Cheeger sets} 

We prove that the existence of $N$-Cheeger clusters of $\Omega$ is ensured whenever the perimeter functional possesses properties \ref{prop:lsc}, \ref{prop:compact}, and \ref{prop:isoperim}, and the set $\Omega\in \A$ is $N$-admissible with finite $\m$-measure. These requests are not necessary though, as some examples at the end of this section show.

\begin{theorem}
\label{res:existence_N-Cheeger}
Let properties \ref{prop:lsc}, \ref{prop:compact}, and \ref{prop:isoperim} be in force. Let $N\in \mathbb{N}$, and let $\Omega\in\mathscr{A}$ be an $N$-admissible set with $\m(\Omega)\in (0,+\infty)$. 
Then there exists an $N$-Cheeger set of $\Omega$.
\end{theorem}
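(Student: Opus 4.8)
The plan is to run the direct method of the calculus of variations on the functional $\mathcal{E}\mapsto\sum_{i=1}^N P(\mathcal{E}(i))/\m(\mathcal{E}(i))$ over $N$-clusters contained in $\Omega$. First I would take a minimizing sequence $\{\mathcal{E}_k\}_{k\in\N}$, where $\mathcal{E}_k=\{\mathcal{E}_k(i)\}_{i=1}^N\subset\Omega$, so that $\sum_{i=1}^N P(\mathcal{E}_k(i))/\m(\mathcal{E}_k(i))\to h_N(\Omega)$ as $k\to+\infty$. Since $\Omega$ is $N$-admissible, $h_N(\Omega)<+\infty$, so we may assume all these ratios are uniformly bounded, hence each $P(\mathcal{E}_k(i))$ is bounded provided $\m(\mathcal{E}_k(i))$ stays bounded — and since $\mathcal{E}_k(i)\subset\Omega$ with $\m(\Omega)<+\infty$, indeed $\m(\mathcal{E}_k(i))\le\m(\Omega)<+\infty$, so $\sup_k P(\mathcal{E}_k(i))<+\infty$ for each $i$.

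Next, by property~\ref{prop:compact}, for each fixed $i$ the sequence $\{\chi_{\mathcal{E}_k(i)}\}_k$ is precompact in $L^1(X,\m)$; passing to a (not relabeled) subsequence, we get $\chi_{\mathcal{E}_k(i)}\to\chi_{\mathcal{E}(i)}$ in $L^1(X,\m)$ for some measurable $\mathcal{E}(i)$, for all $i=1,\ldots,N$ simultaneously (diagonal extraction over the finitely many indices). The candidate minimizer is $\mathcal{E}=\{\mathcal{E}(i)\}_{i=1}^N$. I must check it is an admissible $N$-cluster: disjointness $\m(\mathcal{E}(i)\cap\mathcal{E}(j))=0$ for $i\ne j$ passes to the limit because $L^1$-convergence of characteristic functions implies $\chi_{\mathcal{E}(i)}\chi_{\mathcal{E}(j)}=\lim\chi_{\mathcal{E}_k(i)}\chi_{\mathcal{E}_k(j)}=0$ $\m$-a.e.; finiteness of perimeter $P(\mathcal{E}(i))<+\infty$ follows from lower semicontinuity~\ref{prop:lsc}; and $\m(\mathcal{E}(i))\le\m(\Omega)<+\infty$. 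Also $\mathcal{E}(i)\subset\Omega$ in the sense $\m(\mathcal{E}(i)\setminus\Omega)=0$, again by $L^1$-convergence. The one genuinely delicate point — and the main obstacle — is ruling out $\m(\mathcal{E}(i))=0$ for some $i$, i.e.\ preventing mass from vanishing in the limit. This is exactly where property~\ref{prop:isoperim} enters: if along a subsequence $\m(\mathcal{E}_k(i))\to0$, then for large $k$ we have $\m(\mathcal{E}_k(i))\le\eps$ with $\eps\to0^+$, whence $P(\mathcal{E}_k(i))/\m(\mathcal{E}_k(i))\ge f(\eps)\to+\infty$, contradicting the boundedness of the minimizing sequence's energy. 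So $\inf_k\m(\mathcal{E}_k(i))>0$, and since $\chi_{\mathcal{E}_k(i)}\to\chi_{\mathcal{E}(i)}$ in $L^1$ gives $\m(\mathcal{E}_k(i))\to\m(\mathcal{E}(i))$, we conclude $\m(\mathcal{E}(i))>0$. Thus $\mathcal{E}$ is a genuine $N$-cluster in $\Omega$.

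Finally I would prove minimality: for each $i$, lower semicontinuity~\ref{prop:lsc} gives $P(\mathcal{E}(i))\le\liminf_k P(\mathcal{E}_k(i))$, while $\m(\mathcal{E}_k(i))\to\m(\mathcal{E}(i))\in(0,+\infty)$, so
\[
\frac{P(\mathcal{E}(i))}{\m(\mathcal{E}(i))}
\le
\liminf_{k\to+\infty}\frac{P(\mathcal{E}_k(i))}{\m(\mathcal{E}_k(i))}.
\]
Summing over the finitely many $i$ and using superadditivity of $\liminf$ (here it is cleaner to pass to a further subsequence along which each individual ratio converges, then sum the limits), we get $\sum_i P(\mathcal{E}(i))/\m(\mathcal{E}(i))\le h_N(\Omega)$; the reverse inequality is immediate since $\mathcal{E}$ is an admissible competitor. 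Hence $\mathcal{E}\in\C_N(\Omega)$, which completes the proof. I expect no subtlety beyond the vanishing-mass argument; the rest is a routine application of the direct method, with~\ref{prop:compact} supplying compactness and~\ref{prop:lsc} supplying lower semicontinuity of the numerators.
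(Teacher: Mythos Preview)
Your proposal is correct and follows essentially the same approach as the paper's proof: the direct method of the calculus of variations, with~\ref{prop:compact} providing $L^1$-compactness of each chamber sequence, \ref{prop:isoperim} ruling out the vanishing of $\m(\mathcal{E}_k(i))$ along the minimizing sequence, and~\ref{prop:lsc} giving lower semicontinuity of the perimeter to close the argument. The paper's proof additionally records the lower bound $h_N(\Omega)\ge Nf(\m(\Omega))>0$, but this is not needed for existence; otherwise the two arguments coincide in structure and detail.
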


\begin{proof}
On the one hand, since $\Omega$ is $N$-admissible, there exists an $N$-cluster $\mathcal{E}\subset \Omega$, which immediately implies that $h_N(\Omega)<+\infty$. 
On the other hand, for any $N$-cluster $\mathcal{E}=\{\mathcal{E}(i)\}_{i=1}^N$ of $\Omega$, property~\ref{prop:isoperim} gives
\begin{equation*}
\sum_{i=1}^N\frac{P(\mathcal{E}(i))}{\m(\mathcal{E}(i))}\geq N f(\m(\Omega))\,,
\end{equation*}
hence
\begin{equation*}
    h_N(\Omega)\geq Nf(\m(\Omega))>0\,.
\end{equation*}
Now let $\{\mathcal{E}_k\}_{k\in\N}\subset \Omega$ be a minimizing sequence, i.e.,\ 
\begin{equation*}
\lim_{k\to+\infty}\sum_{i=1}^N\frac{P(\mathcal{E}_k(i))}{\m(\mathcal{E}_k(i))}=h_N(\Omega).
\end{equation*}
Clearly, for any $k\in\mathbb{N}$ sufficiently large and any $i=1,\ldots, N$, we have
\begin{equation*}
    P(\mathcal{E}_k(i))\leq \m(\Omega)\sum_{i=1}^N\frac{P(\mathcal{E}_k(i))}{\m(\mathcal{E}_k(i))}\leq \m(\Omega)(h_N(\Omega)+1)
\end{equation*}
and thus,
\begin{equation*}
    \sup_{k}\set*{\max_i\set*{ P(\mathcal{E}_k(i))}}\leq \m(\Omega)(h_N(\Omega)+1),
\end{equation*}
which is finite, having assumed $\m(\Omega)<+\infty$.

By \ref{prop:lsc} and \ref{prop:compact} (recall also \cref{rem:negligible_modifications}), possibly passing to a subsequence, for each $i=1,\dots, N$, there exists $\mathcal{E}(i)\in\A$ such that $\mathcal{E}(i)\subset\Omega$, with $\m(\mathcal{E}(i))\in[0,\m(\Omega)]$, $P(\mathcal{E}(i))\le \m(\Omega)(h_N(\Omega)+1)$, and $\m(\mathcal{E}_k(i)\bigtriangleup \mathcal{E}(i))\to0^+$ as $k\to+\infty$.  
Now, using \ref{prop:isoperim}, for all $k\in\mathbb{N}$ sufficiently large and any $i\in \{1,\ldots, N\}$, we get
\begin{equation}\label{eq:gerbillo}
    f(\m(\mathcal{E}_k(i)))\leq \frac{P(\mathcal{E}_k(i))}{\m(\mathcal{E}_k(i))}.
\end{equation}
The behavior of $f$ near zero prescribed by \ref{prop:isoperim} immediately implies that $\m(\mathcal{E}(i))\neq 0$ for all $i\in \{1,\ldots, N\}$, as otherwise a contradiction would arise with $h_N(\Omega)<+\infty$. Indeed, on the one hand, being $\{\mathcal{E}_k(i)\}_k$ a minimizing sequence, and owing to~\eqref{eq:gerbillo}, there exists $\bar k\gg1$ such that for all $k\ge \bar k$, we have
\[
f(\m(\mathcal{E}_k(i)))\leq 2h_N(\Omega).
\]
On the other hand, the isoperimetric property \ref{prop:isoperim} implies there exists $\delta>0$ such that $f(x)>2h_N(\Omega)$ for all $x\le \delta$. Hence, we deduce that $\m(\mathcal{E}_k(i)) \ge \delta$ for all $i=1,\dots, N$ and all $k\ge \bar k$.

It remains to be proved that $\mathcal E=\{\mathcal E(i)\}_{i=1}^N$ is an $N$-cluster contained in $\Omega$, i.e., that the chambers $\mathcal{E}(i)$ are pairwise disjoint, and the reader can easily check it on its own.

Consequently, thanks to~\ref{prop:lsc}, we find that
\begin{align*}
    h_N(\Omega)\leq \sum_{i=1}^N\frac{P(\mathcal{E}(i))}{\m(\mathcal{E}(i))}\leq \sum_{i=1}^N\liminf_{k\to+\infty}\frac{P(\mathcal{E}_k(i))}{\m(\mathcal{E}_k(i))}\leq \liminf_{k\to+\infty} \sum_{i=1}^N\frac{P(\mathcal{E}_k(i))}{\m(\mathcal{E}_k(i))}=h_N(\Omega),
\end{align*}
and the conclusion follows.
\end{proof}

Let us point out that properties \ref{prop:lsc}, \ref{prop:compact}, and~\ref{prop:isoperim} are all crucial in the above proof. Among them \ref{prop:isoperim} looks as the ``most artificial''; nevertheless, it is essential in the reasoning:  an example where existence fails when \ref{prop:isoperim} is missing is given in \cref{ex:non_existence_without_P6} below. It is also relevant to point out that these properties provide a sufficient but in no way a necessary condition, as \cref{ex:existence_without_P5_P6} and \cref{ex:existence_without_P5} show.

\begin{example}\label{ex:non_existence_without_P6}
Consider the measure space $(X,\mathscr{A},\m)=(\mathbb{R}^2,\mathscr{B}(\mathbb{R}^2), w\mathscr{L}^2)$, where $\mathscr{B}(\mathbb{R}^2)$ denotes the Borel $\sigma$-algebra, $w\in L^1(\mathbb{R}^2)$ is defined by
\[
w(x)=\begin{cases}
\|x\|^{-\frac32}, &\text{if } \|x\|\leq 1,\\[1ex]
e^{-\|x\|},&\text{if } \|x\|> 1,
\end{cases}
\]
and $P(\cdot)$ is the Euclidean perimeter. In this setting, properties \ref{prop:empty} through \ref{prop:compact} hold, but \ref{prop:isoperim} does not.

Within this framework, one has $h_1(\Omega)=0$, for any set $\Omega$ containing an open neighborhood of the origin. Indeed, it is enough to consider the sequence of balls centered at the origin $B_r \subset \Omega$ (for $r$ sufficiently small), for which we have
\[
P(B_r) = 2\pi r,  
\]
and
\[
\m(B_r) = \int_{B_r} w(x)\, \de x = 2\pi \int_0^r \rho^{-\frac32} \rho\, \de \rho = 4 \pi r^{\frac 12}.
\]
Were to exist $E\in \C_1(\Omega)$, then $P(E)=0$, and by the Euclidean isoperimetric inequality we would have $|E|=0$. Being the weight $w\in L^1(\R^2)$, this would eventually lead to 
\[
\m(E)=\int_{E} w(x)\, \de x=0,
\]
contradicting the fact that $\m(E)>0$. This shows that Cheeger sets do not exist. In more generality, the same happens in any measure space $(X, \A, \m)$ and in any $1$-admissible set $\Omega\in \A$ such that $h_1(\Omega)=0$ and the only measurable subsets $E$ of $\Omega$ with $P(E)=0$ have zero $\m$-measure.

For the sake of completeness, we shall note that, in the situation depicted in this remark, $N$-Cheeger sets exist in any open set $\Omega$ not containing the origin, since the weight $w$ would be $L^\infty(\Omega)$, refer to~\cite{BM16}*{Prop.~3.3} or to~\cite{Saracco18}*{Prop.~3.2}.
\end{example}

We now present two simple examples in which the existence of Cheeger sets is ensured even if properties~\ref{prop:compact} and~\ref{prop:isoperim} do not hold.

\begin{example}\label{ex:existence_without_P5_P6}
Consider any non-negative ($\sigma$-finite) measure space $(X,\A, \m)$, and consider  $P(E)=\m(E)$, for all $E\in \A$, as perimeter functional. For this choice, while~\ref{prop:lsc} holds, neither property~\ref{prop:compact} nor~\ref{prop:isoperim} hold, the latter because any isoperimetric function $f$ is bounded from above by $1$.  Nevertheless, fixed any $\Omega \in \A$, we have $h_N(\Omega) = N$, for any integer $N$, and any $N$-cluster is an $N$-Cheeger set.
\end{example}

\begin{example}\label{ex:existence_without_P5}
Consider any non-negative ($\sigma$-finite) measure space $(X,\A, \m)$, and consider $P(E)=0$, for all $E\in \A$, as perimeter functional. While~\ref{prop:lsc} holds, neither property~\ref{prop:compact} nor~\ref{prop:isoperim} hold. Nevertheless, fixed any $\Omega \in \A$, we have $h_N(\Omega) = 0$, for any integer $N$, and any $N$-cluster is an $N$-Cheeger set.
\end{example}

\subsection{Inequalities between the \texorpdfstring{$N$}{N}- and \texorpdfstring{$M$}{M}-Cheeger constants}\label{sec:ineq_M_N_constants}

\begin{proposition}\label{prop:ineq_M_N_constants}
Let $\Omega\in\A$ be an $N$-admissible set. Then, for all $M\in \N$ with $M<N$, one has
\begin{equation}\label{eq:general_M&N-Cheeger_constants}
h_M(\Omega) + h_{N-M}(\Omega) \le h_N(\Omega).
\end{equation}
\end{proposition}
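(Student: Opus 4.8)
The plan is to exploit the obvious fact that any $N$-cluster in $\Omega$ splits into an $M$-cluster and an $(N-M)$-cluster. First I would record that the statement is meaningful: since $\Omega$ is $N$-admissible and both $M\le N$ and $N-M\le N$, \cref{rem:Nadm=>Madm} ensures $\Omega$ is both $M$-admissible and $(N-M)$-admissible, so $h_M(\Omega)$ and $h_{N-M}(\Omega)$ are finite.

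Next I would fix an arbitrary $N$-cluster $\mathcal{E}=\{\mathcal{E}(i)\}_{i=1}^N\subset\Omega$ and observe that the subfamily $\{\mathcal{E}(i)\}_{i=1}^M$ consists of $M$ pairwise $\m$-disjoint subsets of $\Omega$, each of positive finite $\m$-measure and finite perimeter, hence is an $M$-cluster in $\Omega$; after relabelling, $\{\mathcal{E}(i)\}_{i=M+1}^N$ is likewise an $(N-M)$-cluster in $\Omega$. Applying the definition~\eqref{eq:cheeger_N-problem} of the Cheeger constants to these two clusters gives
\begin{equation*}
h_M(\Omega)\le\sum_{i=1}^M\frac{P(\mathcal{E}(i))}{\m(\mathcal{E}(i))},
\qquad
h_{N-M}(\Omega)\le\sum_{i=M+1}^N\frac{P(\mathcal{E}(i))}{\m(\mathcal{E}(i))},
\end{equation*}
and summing these two inequalities yields
\begin{equation*}
h_M(\Omega)+h_{N-M}(\Omega)\le\sum_{i=1}^N\frac{P(\mathcal{E}(i))}{\m(\mathcal{E}(i))}.
\end{equation*}

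Finally, since $\mathcal{E}$ was an arbitrary $N$-cluster in $\Omega$, I would pass to the infimum over all such clusters on the right-hand side, obtaining exactly~\eqref{eq:general_M&N-Cheeger_constants}. There is essentially no obstacle: the only points requiring (minimal) care are verifying that the two subfamilies satisfy the three defining conditions of a cluster — immediate from those of $\mathcal{E}$ — and that the smaller Cheeger constants are well defined, which is the admissibility remark above. Notably, no property among~\ref{prop:empty}--\ref{prop:complement_set} is needed for this inequality.
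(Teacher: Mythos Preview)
Your proof is correct and follows essentially the same approach as the paper's: split an arbitrary $N$-cluster into an $M$-subcluster and an $(N-M)$-subcluster, bound $h_M(\Omega)$ and $h_{N-M}(\Omega)$ from above by the corresponding partial sums, add, and take the infimum. The paper phrases the split via an arbitrary index set $J_M\subset\{1,\dots,N\}$ rather than the first $M$ indices, but this is cosmetic.
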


\begin{proof}
Let $M$ and $N$ be fixed integers, with $M<N$. Let $\mathcal{E}$ be any fixed $N$-cluster. For any subset $J_M$ of $\{1,\dots, N\}$ of cardinality $M$, the $M$-cluster $\{\mathcal{E}(i)\}_{i\in J_M}$ provides an upper bound to $h_M(\Omega)$, whereas the $(N-M)$-cluster $\{\mathcal{E}(i)\}_{i\notin J_M}$ to $h_{N-M}(\Omega)$.

Hence, no matter how we choose $J_M$, we have
\[
\sum_{i=1}^N \frac{P(\mathcal{E}(i))}{\m(\mathcal{E}(i))} = \sum_{i\in J_M} \frac{P(\mathcal{E}(i))}{\m(\mathcal{E}(i))} + \sum_{i\notin J_M} \frac{P(\mathcal{E}(i))}{\m(\mathcal{E}(i))} \ge h_M(\Omega) + h_{N-M}(\Omega).
\]
By taking the infimum among all $N$-clusters, the desired inequality follows.
\end{proof}

\begin{corollary}
Let $\Omega\in\A$ be an $N$-admissible set. Then, for all $M\in \N$ such that for some integer $k$ one has $N=kM$, one has
\begin{equation}\label{eq:M&N-Cheeger_constants}
kh_M(\Omega) \le h_N(\Omega).
\end{equation}
\end{corollary}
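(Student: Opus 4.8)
The plan is to derive the corollary directly from \cref{prop:ineq_M_N_constants} by an iteration (induction on $k$). Fix $M\in\N$ and suppose $N=kM$ for some integer $k\ge1$. The case $k=1$ is trivial, since then $N=M$ and the claimed inequality $h_M(\Omega)\le h_N(\Omega)$ is an equality. For the inductive step, assume $k\ge2$ and that the statement holds for $k-1$, i.e.\ $(k-1)h_M(\Omega)\le h_{(k-1)M}(\Omega)$ whenever $\Omega$ is $(k-1)M$-admissible.

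First I would observe that $\Omega$ being $N$-admissible with $N=kM\ge (k-1)M\ge M$ implies, by \cref{rem:Nadm=>Madm}, that $\Omega$ is both $M$-admissible and $(k-1)M$-admissible, so all the Cheeger constants appearing below are well defined and finite. Then I would apply \cref{prop:ineq_M_N_constants} with the decomposition $N=M+(N-M)=M+(k-1)M$: since $M<N$, inequality~\eqref{eq:general_M&N-Cheeger_constants} gives
\begin{equation*}
h_M(\Omega)+h_{(k-1)M}(\Omega)\le h_{kM}(\Omega)=h_N(\Omega).
\end{equation*}
Combining this with the inductive hypothesis $h_{(k-1)M}(\Omega)\ge (k-1)h_M(\Omega)$ yields
\begin{equation*}
k\,h_M(\Omega)=h_M(\Omega)+(k-1)h_M(\Omega)\le h_M(\Omega)+h_{(k-1)M}(\Omega)\le h_N(\Omega),
\end{equation*}
which is exactly~\eqref{eq:M&N-Cheeger_constants} and closes the induction.

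There is really no substantive obstacle here: the only point requiring a moment of care is the bookkeeping of admissibility — one must invoke \cref{rem:Nadm=>Madm} to guarantee that $h_M(\Omega)$ and $h_{(k-1)M}(\Omega)$ make sense before chaining the inequalities — but this is immediate. An alternative, non-inductive phrasing would be to telescope: writing $N=kM$ and repeatedly splitting off one block of $M$ chambers, one gets $h_N(\Omega)\ge h_M(\Omega)+h_{(k-1)M}(\Omega)\ge h_M(\Omega)+h_M(\Omega)+h_{(k-2)M}(\Omega)\ge\cdots\ge k\,h_M(\Omega)$; this is the same argument written out, and the induction is the cleanest way to make it rigorous.
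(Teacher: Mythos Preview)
Your proof is correct and is precisely the intended argument: the paper states the corollary without proof, as an immediate consequence of \cref{prop:ineq_M_N_constants} obtained by iterating the splitting inequality, which is exactly your induction/telescoping.
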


\begin{remark}\label{rem:(N+2)-barbells}
The inequalities~\eqref{eq:general_M&N-Cheeger_constants} and~\eqref{eq:M&N-Cheeger_constants} hold as equalities in some cases, as, for instance, it happens anytime a set has multiple disjoint $1$-Cheeger sets. A trivial example of this behavior is given by $N$ disjoint and equal balls in the usual Euclidean space.

One can also build connected sets that have this feature. For $N=2$, it is enough to consider a standard dumbbell in the usual $2$-dimensional Euclidean space, that is, the set given by two disjoint equal balls, spaced sufficiently far apart, and connected via a thin tube. Such a set has two connected $1$-Cheeger sets $\mathcal{E}(1)$ and $\mathcal{E}(2)$ given by small perturbations of the two balls, and the $2$-cluster $\mathcal{E}=\{\mathcal{E}(i)\}$ is necessarily a $2$-Cheeger set, refer, for instance, to \cite{LP16}*{Ex.~4.5}.

An easy connected example for $N>2$ is instead given by an $(N+2)$-dumbbell in the usual $2$-dimensional Euclidean space, that is, a set formed by $N+2$ disjoint equal balls and linked by a thin tube, say
\[
\bigcup_{i=1}^{N+2} B_1((4i, 0)) \cup \Big((4, 4(N+2))\times(-\varepsilon, +\varepsilon)\Big),
\]
where $B_1((4i, 0))$ denotes the $2$-dimensional Euclidean ball of radius $1$ centered at the point $(4i,0)\in\R^2$. 
For $\varepsilon$ sufficiently small, and arguing as in \cite{LP16}*{Ex.~4.5}, it can be shown that such a set has $N$ connected and disjoint $1$-Cheeger sets, each corresponding to a small perturbation of the $N$ balls with two neighboring ones.
\end{remark}

\subsection{\texorpdfstring{$M$}{M}-subclusters of \texorpdfstring{$N$}{N}-Cheeger sets}
Given an $N$-Cheeger set of $\Omega$, consider any of its $M$-subcluster. It is natural to imagine that such an $M$-cluster is an $M$-Cheeger set in the ambient space given by $\Omega$ minus the $N-M$ chambers not belonging to the subcluster. In this short section, we prove that this is true.

For the sake of clarity of notation, we let $|J|\in\N\cup\set{0}\cup\set{+\infty}$ be the cardinality of a set $J\subset\N$.

\begin{proposition}\label{lem:M-Cheeger_from_N-Cheeger}
Let $\Omega\in\mathscr{A}$ be an $N$-admissible set, and assume that it has an $N$-Cheeger set $\mathcal{E} = \{\mathcal{E}(i)\}_{i=1}^N \in \C_N(\Omega)$.
For any proper subset $J\subset \{1,\dots, N\}$, let
\begin{equation}\label{def:Omega_J}
\Omega_J = \Omega \setminus \bigcup_{j\notin J} \mathcal{E}(j),
\end{equation}
and let $\mathcal{E}_J$ be the $|J|$-cluster given by
\[
\mathcal{E}_J = \{\mathcal{E}(j)\}_{j\in J}.
\]
Then, $\mathcal{E}_J$ is a $|J|$-Cheeger set of $\Omega_J$.
\end{proposition}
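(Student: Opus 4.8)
The plan is to argue by contradiction, exploiting the fact that the Cheeger ratio is \emph{additive} over the chambers of a cluster and that the chambers outside $J$ are ``frozen'' in the definition of $\Omega_J$. First I would record the trivial observation that $\mathcal{E}_J$ is indeed a $|J|$-cluster contained in $\Omega_J$: its chambers are pairwise disjoint, have positive finite measure and finite perimeter since they are chambers of the $N$-cluster $\mathcal{E}$, and each $\mathcal{E}(j)$ with $j\in J$ satisfies $\m(\mathcal{E}(j)\setminus\Omega_J)=0$ because $\mathcal{E}(j)$ is (up to $\m$-null sets) disjoint from $\bigcup_{i\notin J}\mathcal{E}(i)$. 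In particular $\Omega_J$ is $|J|$-admissible, so $h_{|J|}(\Omega_J)$ is well defined and finite, and by testing with $\mathcal{E}_J$ we get immediately
\[
h_{|J|}(\Omega_J)\le\sum_{j\in J}\frac{P(\mathcal{E}(j))}{\m(\mathcal{E}(j))}.
\]

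The reverse inequality is the substance. Suppose, towards a contradiction, that $\mathcal{E}_J$ is \emph{not} a $|J|$-Cheeger set of $\Omega_J$; then the above inequality is strict, and there exists a $|J|$-cluster $\mathcal{F}=\{\mathcal{F}(j)\}_{j\in J}\subset\Omega_J$ with
\[
\sum_{j\in J}\frac{P(\mathcal{F}(j))}{\m(\mathcal{F}(j))}<\sum_{j\in J}\frac{P(\mathcal{E}(j))}{\m(\mathcal{E}(j))}.
\]
The key point is that each $\mathcal{F}(j)\subset\Omega_J\subset\Omega$, so that the collection $\mathcal{G}:=\{\mathcal{G}(i)\}_{i=1}^N$ defined by $\mathcal{G}(j)=\mathcal{F}(j)$ for $j\in J$ and $\mathcal{G}(i)=\mathcal{E}(i)$ for $i\notin J$ is a candidate $N$-cluster of $\Omega$ --- once one checks that the chambers remain pairwise disjoint. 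This is where the definition of $\Omega_J$ pays off: for $j\in J$ and $i\notin J$, $\mathcal{G}(j)=\mathcal{F}(j)\subset\Omega_J=\Omega\setminus\bigcup_{i'\notin J}\mathcal{E}(i')$, hence $\m(\mathcal{F}(j)\cap\mathcal{E}(i))=0$; the disjointness of $\mathcal{F}(j)$ from $\mathcal{F}(j')$ (for $j,j'\in J$, $j\neq j'$) is part of $\mathcal{F}$ being a cluster, and disjointness of $\mathcal{E}(i)$ from $\mathcal{E}(i')$ (for $i,i'\notin J$) is part of $\mathcal{E}$ being a cluster. The positivity and finiteness of measures and perimeters of all chambers is likewise inherited. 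Hence $\mathcal{G}$ is a genuine $N$-cluster of $\Omega$.

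Now I would simply split the Cheeger sum of $\mathcal{G}$ along $J$ and its complement and use the strict inequality above:
\[
\sum_{i=1}^N\frac{P(\mathcal{G}(i))}{\m(\mathcal{G}(i))}
=\sum_{j\in J}\frac{P(\mathcal{F}(j))}{\m(\mathcal{F}(j))}+\sum_{i\notin J}\frac{P(\mathcal{E}(i))}{\m(\mathcal{E}(i))}
<\sum_{j\in J}\frac{P(\mathcal{E}(j))}{\m(\mathcal{E}(j))}+\sum_{i\notin J}\frac{P(\mathcal{E}(i))}{\m(\mathcal{E}(i))}
=\sum_{i=1}^N\frac{P(\mathcal{E}(i))}{\m(\mathcal{E}(i))}=h_N(\Omega),
\]
which contradicts the fact that $h_N(\Omega)$ is the infimum over all $N$-clusters of $\Omega$. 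Therefore no such $\mathcal{F}$ exists, the two displayed inequalities for $h_{|J|}(\Omega_J)$ combine to an equality, and $\mathcal{E}_J$ realizes it, i.e.\ $\mathcal{E}_J\in\C_{|J|}(\Omega_J)$. I do not expect any real obstacle here: the only points requiring (minor) care are the bookkeeping that $\mathcal{E}_J$ and $\mathcal{G}$ are legitimate clusters in the sense of the paper's definition (pairwise $\m$-essential disjointness, using the convention $A\subset B\iff\m(A\setminus B)=0$), and noting that $J$ being a \emph{proper} subset guarantees the complement is nonempty so that $\Omega_J$ is genuinely a restriction; both are routine. Note that no property \ref{prop:empty}--\ref{prop:complement_set} of $P$ is needed for this argument.
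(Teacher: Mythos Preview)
Your argument is correct and is essentially the same swap-by-contradiction approach as the paper's proof: assume $\mathcal{E}_J$ is not optimal in $\Omega_J$, replace the chambers indexed by $J$ with the better competitor while keeping the remaining chambers frozen, and contradict the $N$-optimality of $\mathcal{E}$ in $\Omega$. The only cosmetic difference is that the paper reduces to the case $|J|=N-1$ and then invokes induction, whereas you handle an arbitrary proper $J$ directly; your version is slightly cleaner and avoids the (harmless) induction.
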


\begin{proof}
It is enough to prove the claim for a subset $J$ of cardinality $N-1$, and then to reason by induction. In particular, up to relabeling, we can assume $J$ to be the proper subset $\{1,\dots, N-1\}$.

As both $\Omega$ and $\mathcal{E}(N)$ are measurable, so it is the set $\Omega_{J}$. Moreover, this latter is $(N-1)$-admissible because there exists at least the $(N-1)$-cluster $\{\mathcal{E}(i)\}_{i=1}^{N-1}$.

By contradiction, assume that $\{\mathcal{E}(i)\}_{i=1}^{N-1}$ is not an $(N-1)$-Cheeger set of $\Omega_J$. Then, for $\varepsilon$ small enough, we find a different $(N-1)$-cluster $\{\mathcal{F}(i)\}_{i=1}^{N-1}$ with
\[
\sum_{i=1}^{N-1}\frac{P(\mathcal{F}(i))}{\m(\mathcal{F}(i))} < h_{N-1}(\Omega)+\varepsilon < \sum_{i=1}^{N-1}\frac{P(\mathcal{E}(i))}{\m(\mathcal{E}(i))}.
\]
It is then immediate that the $N$-cluster
\[
\{\mathcal{F}(i)\}_{i=1}^{N} = \{\mathcal{F}(1), \dots, \mathcal{F}(N-1), \mathcal{E}(N) \}
\]
contradicts the minimality of the $N$-cluster $\{\mathcal{E}(i)\}_{i=1}^{N}$ in $\Omega$.
\end{proof}

\subsection{Properties of \texorpdfstring{$N$}{N}-Cheeger sets}

\begin{proposition}
[Basic properties of $N$-Cheeger sets]
Let $\{\Omega_k\}_k\subset\A$ be a collection of $N$-admissible sets. The following hold for all integers $M\le N$:

\begin{enumerate}[label=(\roman*)]
    \item\label{item:beatles} if $\Omega_1\subset \Omega_2$, then $h_M(\Omega_1)\ge h_M(\Omega_2)$;
    \item\label{item:abba} if~\ref{prop:isoperim} is in force, and $\m(\Omega_k)\to0^+$, then $h_M(\Omega_k)\to+\infty$;
    \item\label{item:queen} if~\ref{prop:lsc}, \ref{prop:compact}, and~\ref{prop:isoperim} are in force, and $\Omega_k\to\Omega$ in $L^1(X, \m)$, with $\m(\Omega)\in (0,+\infty)$, then
    \[
    h_M(\Omega)\le\liminf_k h_M(\Omega_k).
    \]
    Moreover, if also~\ref{prop:sub-mod} is in force, $P(\Omega)$ is finite, and $P(\Omega_k) \to P(\Omega)$, then
    \[
    h_M(\Omega)= \lim_k h_M(\Omega_k).
    \]
\end{enumerate}
\end{proposition}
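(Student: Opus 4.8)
The plan is to handle the three items in order, with (i) and (ii) being short and (iii) carrying the real content. For item (i), if $\Omega_1 \subset \Omega_2$ then any $M$-cluster $\mathcal{E} = \{\mathcal{E}(i)\}_{i=1}^M \subset \Omega_1$ is automatically an $M$-cluster contained in $\Omega_2$, so the class of competitors defining $h_M(\Omega_1)$ is a subset of that for $h_M(\Omega_2)$; taking infima over a larger class can only decrease the value, giving $h_M(\Omega_1) \ge h_M(\Omega_2)$. (One should note the convention $A \subset B \iff \m(A \setminus B) = 0$ and check this causes no issue — it does not, since chambers of a cluster in $\Omega_1$ satisfy $\m(\mathcal{E}(i)\setminus \Omega_1)=0$ hence $\m(\mathcal{E}(i)\setminus\Omega_2)=0$.)

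For item (ii), fix any $M$-cluster $\mathcal{E}=\{\mathcal{E}(i)\}_{i=1}^M \subset \Omega_k$. Each chamber satisfies $\m(\mathcal{E}(i)) \le \m(\Omega_k)$, so for $k$ large enough that $\m(\Omega_k) \le \eps$ we may apply~\ref{prop:isoperim} with this $\eps$ to get $P(\mathcal{E}(i)) \ge f(\m(\Omega_k))\,\m(\mathcal{E}(i))$, i.e.\ $P(\mathcal{E}(i))/\m(\mathcal{E}(i)) \ge f(\m(\Omega_k))$ for each $i$. Summing and taking the infimum over clusters yields $h_M(\Omega_k) \ge M f(\m(\Omega_k))$, and since $\m(\Omega_k)\to 0^+$ we have $f(\m(\Omega_k)) \to +\infty$ by the defining property of $f$, whence $h_M(\Omega_k)\to+\infty$. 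A small technical point: one should first argue $\Omega_k$ is still $M$-admissible (it is, by hypothesis) so that $h_M(\Omega_k)$ is well-defined and finite for each $k$; the estimate above is the content.

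For item (iii), the first inequality is a lower-semicontinuity statement for $h_M$ along $L^1$-converging domains. The strategy is: pick a minimizing sequence of domains, i.e.\ along a subsequence (not relabeled) assume $h_M(\Omega_k) \to \liminf_k h_M(\Omega_k) =: \ell$, and WLOG $\ell < +\infty$ (else nothing to prove). For each $k$, by~\cref{res:existence_N-Cheeger} (whose hypotheses~\ref{prop:lsc},~\ref{prop:compact},~\ref{prop:isoperim} are assumed, plus $\Omega_k$ is $M$-admissible with finite measure — here one needs $\m(\Omega_k) < +\infty$, which follows for large $k$ from $\Omega_k \to \Omega$ in $L^1$ and $\m(\Omega) < +\infty$, since $\m(\Omega_k) \le \m(\Omega) + \|\chi_{\Omega_k}-\chi_\Omega\|_1$), choose an $M$-Cheeger cluster $\mathcal{C}_k = \{\mathcal{C}_k(i)\}_{i=1}^M \in \C_M(\Omega_k)$. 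Then $\sum_i P(\mathcal{C}_k(i))/\m(\mathcal{C}_k(i)) = h_M(\Omega_k)$ is bounded, so as in the proof of~\cref{res:existence_N-Cheeger} each $P(\mathcal{C}_k(i))$ is bounded (using $\m(\mathcal{C}_k(i)) \le \m(\Omega_k) \le C$) and each $\m(\mathcal{C}_k(i))$ is bounded below by a positive constant $\delta$ independent of $k$ and $i$ via~\ref{prop:isoperim} (again exactly the argument already spelled out in~\cref{res:existence_N-Cheeger}). By~\ref{prop:compact} and~\ref{prop:lsc}, up to a further subsequence, $\chi_{\mathcal{C}_k(i)} \to \chi_{\mathcal{C}(i)}$ in $L^1(X,\m)$ for each $i$, with $\m(\mathcal{C}(i)) \ge \delta > 0$, $P(\mathcal{C}(i)) \le \liminf_k P(\mathcal{C}_k(i))$, and $\m(\mathcal{C}(i)) = \lim_k \m(\mathcal{C}_k(i))$. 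One checks the $\mathcal{C}(i)$ are pairwise disjoint (passing $\m(\mathcal{C}_k(i)\cap\mathcal{C}_k(j))=0$ to the limit) and that $\mathcal{C}(i) \subset \Omega$: indeed $\m(\mathcal{C}(i)\setminus\Omega) \le \m(\mathcal{C}(i)\setminus\mathcal{C}_k(i)) + \m(\mathcal{C}_k(i)\setminus\Omega_k) + \m(\Omega_k\setminus\Omega) \to 0$ since the middle term is $0$ and the others vanish. Hence $\mathcal{C} = \{\mathcal{C}(i)\}_{i=1}^M$ is an $M$-cluster in $\Omega$, and
\begin{equation*}
h_M(\Omega) \le \sum_{i=1}^M \frac{P(\mathcal{C}(i))}{\m(\mathcal{C}(i))} \le \sum_{i=1}^M \liminf_{k\to\infty}\frac{P(\mathcal{C}_k(i))}{\m(\mathcal{C}_k(i))} \le \liminf_{k\to\infty}\sum_{i=1}^M\frac{P(\mathcal{C}_k(i))}{\m(\mathcal{C}_k(i))} = \ell,
\end{equation*}
using that the $\m(\mathcal{C}_k(i))$ converge to $\m(\mathcal{C}(i)) > 0$ so the ratios pass to the liminf correctly. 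This proves the first claim of (iii).

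For the equality in (iii), it remains to prove $\limsup_k h_M(\Omega_k) \le h_M(\Omega)$ under the extra hypotheses~\ref{prop:sub-mod}, $P(\Omega) < +\infty$, and $P(\Omega_k)\to P(\Omega)$. The natural approach is to take a near-optimal $M$-cluster $\mathcal{F} = \{\mathcal{F}(i)\}_{i=1}^M \subset \Omega$ for $h_M(\Omega)$, with $\sum_i P(\mathcal{F}(i))/\m(\mathcal{F}(i)) \le h_M(\Omega) + \eps$, and produce from it competitors in $\Omega_k$ by intersecting: set $\mathcal{F}_k(i) := \mathcal{F}(i) \cap \Omega_k$. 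Then one needs $\m(\mathcal{F}_k(i)) \to \m(\mathcal{F}(i)) > 0$ (clear, since $\m(\mathcal{F}(i)\setminus\Omega_k) \le \m(\Omega\setminus\Omega_k) = \|\chi_\Omega - \chi_{\Omega_k}\|_{L^1(\{...\})} \to 0$), pairwise disjointness (inherited), and the crucial perimeter control $\limsup_k P(\mathcal{F}_k(i)) \le P(\mathcal{F}(i))$. The submodularity~\ref{prop:sub-mod} applied to $E = \mathcal{F}(i)$ and $F = \Omega_k$ gives $P(\mathcal{F}(i)\cap\Omega_k) + P(\mathcal{F}(i)\cup\Omega_k) \le P(\mathcal{F}(i)) + P(\Omega_k)$; combined with~\ref{prop:lsc} giving $P(\mathcal{F}(i)\cup\Omega) \le \liminf_k P(\mathcal{F}(i)\cup\Omega_k)$ (note $\mathcal{F}(i)\cup\Omega_k \to \mathcal{F}(i)\cup\Omega = \Omega$ in $L^1$, so actually $P(\Omega) \le \liminf_k P(\mathcal{F}(i)\cup\Omega_k)$) and $P(\Omega_k)\to P(\Omega) < +\infty$, we obtain
\begin{equation*}
\limsup_{k\to\infty} P(\mathcal{F}_k(i)) \le \limsup_{k\to\infty}\bigl(P(\mathcal{F}(i)) + P(\Omega_k) - P(\mathcal{F}(i)\cup\Omega_k)\bigr) \le P(\mathcal{F}(i)) + P(\Omega) - P(\Omega) = P(\mathcal{F}(i)).
\end{equation*}
Hence $\mathcal{F}_k = \{\mathcal{F}_k(i)\}_{i=1}^M$ is eventually an $M$-cluster in $\Omega_k$ (each chamber has positive finite measure and finite perimeter for $k$ large), so $h_M(\Omega_k) \le \sum_i P(\mathcal{F}_k(i))/\m(\mathcal{F}_k(i))$, and passing to the limsup (using $\m(\mathcal{F}_k(i))\to\m(\mathcal{F}(i))>0$ in the denominators) yields $\limsup_k h_M(\Omega_k) \le \sum_i P(\mathcal{F}(i))/\m(\mathcal{F}(i)) \le h_M(\Omega) + \eps$. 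Letting $\eps \to 0$ and combining with the first inequality gives $\lim_k h_M(\Omega_k) = h_M(\Omega)$.

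\textbf{Main obstacle.} The delicate point is the upper bound in (iii): making sure the cut-off chambers $\mathcal{F}(i)\cap\Omega_k$ keep positive measure and controlled perimeter simultaneously. The measure part is routine from $L^1$-convergence of domains, but the perimeter estimate genuinely requires submodularity~\ref{prop:sub-mod} together with the hypothesis $P(\Omega_k)\to P(\Omega)$ — this is exactly why those assumptions are listed only for the equality and not for the one-sided inequality. A secondary (but purely bookkeeping) obstacle throughout is checking that all chambers remain subsets of the relevant domain in the sense $\m(\cdot \setminus \cdot) = 0$ and that disjointness survives the limits; these are the "easily checked" points but should be stated. One should also double-check that in item (iii) the domains $\Omega_k$ are $M$-admissible for large $k$ so that $h_M(\Omega_k)$ is meaningful — for the liminf inequality this is part of the hypothesis, and for the limsup direction it is produced along the way by exhibiting the cluster $\mathcal{F}_k$.
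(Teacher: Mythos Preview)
Your proposal is correct and follows essentially the same approach as the paper's proof. The only minor variations are cosmetic: in (ii) you bound $h_M$ directly by $Mf(\m(\Omega_k))$ whereas the paper first reduces to $M=1$ via \cref{prop:ineq_M_N_constants}, and in the upper bound of (iii) you work with an $\eps$-near-optimal cluster in $\Omega$ while the paper uses an exact $M$-Cheeger cluster (whose existence it gets from \cref{res:existence_N-Cheeger}); neither change affects the substance of the argument.
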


\begin{proof}
Recall that an $N$-admissible set $\Omega$ is also $M$-admissible for all integers $M\le N$, see \cref{rem:Nadm=>Madm}.

\vspace{1ex}

\textit{Proof of \ref{item:beatles}}.
For any two fixed $N$-admissible sets with $\Omega_1\subset \Omega_2$, any $M$-cluster of $\Omega_1$ is also an $M$-cluster of $\Omega_2$. The inequality immediately follows by definition of $M$-Cheeger constant.

\vspace{1ex}

\textit{Proof of~\ref{item:abba}}.
In virtue of \eqref{eq:M&N-Cheeger_constants} and the positivity of $h_M(\Omega)$, it is enough to prove the claim for $M=1$. Fix $\varepsilon>0$, and for all $k$, let $C_k \subset \Omega_k$ be such that
\[
h_1(\Omega_k) + \varepsilon \ge \frac{P(C_k)}{\m(C_k)}.
\]
Then, by~\ref{prop:isoperim}, we have
\[
h_1(\Omega_k) +\varepsilon \ge f(\m(C_k)),
\]
and the claim follows by the monotonicity of the measure paired with the hypothesis that the $\m$-measure of $\Omega_k$ vanishes, that is, $\m(C_k)\le \m(\Omega_k) \to 0$, and the behavior of $f$ prescribed by~\ref{prop:isoperim}.

\vspace{1ex}

\textit{Proof of \ref{item:queen}}. 
Without loss of generality, we can assume there exists a constant $C_1<+\infty$ independent of $k$ such that
\[
\liminf_k h_M(\Omega_k)\le C_1,
\]
as otherwise there is nothing to prove. Let us consider the (not relabeled) sequence realizing the $\liminf$. Since $\Omega_k$ is converging in $L^1(X, \m)$ to a set of finite $\m$-measure, we can also assume that $\m(\Omega_k)$ is equibounded, independently of $k$, that is,
\[
\m(\Omega_k)\le C_2.
\]
Thus, by \cref{res:existence_N-Cheeger}, for each $k$, there exists an $M$-Cheeger set $\mathcal{E}_k$ for $\Omega_k$. Moreover, for any $k$, we have
\begin{equation}\label{eq:key_ineq_liminf_hM}
\sum_{i=1}^M P(\mathcal{E}_k(i)) \le h_M(\Omega_k) \m(\Omega_k) \le C.
\end{equation}
Hence, by~\ref{prop:compact}, we can extract a (not relabeled) subsequence $\{\mathcal{E}_k(i)\}_k$ such that for all indexes $i$, the chamber $\mathcal{E}_k(i)$ converges in $L^1(X, \m)$ to a limit set $\mathcal{E}(i)$ necessarily contained in $\Omega$ up to $\m$-null sets. Moreover, by~\ref{prop:isoperim}, one necessarily has $\m(\mathcal{E}(i))>0$ as otherwise a contradiction with the finiteness of $\liminf_k h_M(\Omega_k)$ would arise. Hence, $\mathcal{E}$ is an $M$-cluster of $\Omega$. Thus, owing to~\ref{prop:lsc} and the fact that $\mathcal{E}_k$ is an $M$-Cheeger cluster of $\Omega_k$, we have
\[
h_M(\Omega) \le \sum_{i=1}^M \frac{P(\mathcal{E}(i))}{\m(\mathcal{E}(i))} \le \liminf_k \sum_{i=1}^M \frac{P(\mathcal{E}_k(i))}{\m(\mathcal{E}_k(i))} = \liminf_k h_M(\Omega_k),
\]
that is, the first part of the claim.

To show the second part, let us pick an $M$-Cheeger cluster $\mathcal{E}$ of $\Omega$, which exists since we are under the assumptions of \cref{res:existence_N-Cheeger}. Let us consider the collections
\[
\{\mathcal{E}_k(i) = \mathcal{E}(i) \cap \Omega_k\},
\]
which are $M$-clusters of $\Omega_k$ for $k$ sufficiently large.
Clearly, for each fixed $i$, we have that $\mathcal{E}_k(i)$ converges in $L^1(X, \m)$ to $\mathcal{E}(i)$, while $\mathcal{E}_k(i) \cup \Omega_k$ to $\Omega$. Therefore, by~\ref{prop:sub-mod}, for each $i$, we have
\begin{align*}
    P(\mathcal{E}_k(i)) \le P(\mathcal{E}(i)) + P(\Omega_k) - P(\mathcal{E}(i)\cup \Omega_k).
\end{align*}
Taking the $\limsup_k$, using the assumption of the convergence of $P(\Omega_k)$, we have
\[
\limsup_k P(\mathcal{E}_k(i)) \le  P(\mathcal{E}(i)) + P(\Omega) - \liminf_k P(\mathcal{E}(i)\cup \Omega_k) \le P(\mathcal{E}(i)). 
\]
Together with~\ref{prop:lsc} this implies that, for each $i$, $\lim_k P(\mathcal{E}_k(i))$ exists and equals $P(\mathcal{E}(i))$. Combining this fact with the minimality of $\mathcal{E}$ for $h_M(\Omega)$ and the first part of the claim, we conclude the proof.
\end{proof}

\begin{remark}
Notice that to prove point~\ref{item:queen}, all requests come into play. Indeed, a priori, one can work with an ``almost-infimizing'' $M$-cluster for $h_M(\Omega_k)$ and find an analogous of~\eqref{eq:key_ineq_liminf_hM} up to an additive factor $\varepsilon\m(\Omega_k)$. Then, the compactness granted by~\ref{prop:compact} is needed, but in order to ensure that the limiting collection $\mathcal{E}$ is indeed a cluster, the isoperimetric property~\ref{prop:isoperim} is needed. Finally, when talking about a $\liminf$ property, we cannot avoid enforcing~\ref{prop:lsc}. 
\end{remark}

\begin{lemma}\label{lem:lower_bound_volume}
Let $\Omega\in\A$ be an $N$-admissible set with $\m(\Omega)\in (0,+\infty)$, and assume that $\C_N(\Omega)\ne\emptyset$. If~\ref{prop:isoperim} is in force, then for every $N$-Cheeger set $\{\mathcal{C}(i)\}_{i=1}^N\in \C_N(\Omega)$, and for every $i=1,\dots, N$, we have
\[
\m(\mathcal{C}(i)) \ge c,
\]
where $c>0$ is a constant depending only on $h_N(\Omega)$ and the isoperimetric function $f$ appearing in~\ref{prop:isoperim}. 
\end{lemma}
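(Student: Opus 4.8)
The plan is to bound each ratio $P(\mathcal{C}(i))/\m(\mathcal{C}(i))$ from above by $h_N(\Omega)$ and then convert this into a lower bound on $\m(\mathcal{C}(i))$ via the blow-up of $f$ at the origin prescribed by~\ref{prop:isoperim}. First I would recall that, by definition of $N$-cluster, every chamber of a Cheeger cluster $\{\mathcal{C}(i)\}_{i=1}^N\in\C_N(\Omega)$ satisfies $0<\m(\mathcal{C}(i))<+\infty$ and $P(\mathcal{C}(i))<+\infty$, and that
\[
\sum_{i=1}^N\frac{P(\mathcal{C}(i))}{\m(\mathcal{C}(i))}=h_N(\Omega)<+\infty .
\]
Since all summands are non-negative, discarding all but the $i$-th one gives $\dfrac{P(\mathcal{C}(i))}{\m(\mathcal{C}(i))}\le h_N(\Omega)$ for every $i=1,\dots,N$.

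Next I would apply property~\ref{prop:isoperim} to the set $E=\mathcal{C}(i)$ with the choice $\eps=\m(\mathcal{C}(i))>0$ (so that the admissibility condition $\m(E)\le\eps$ holds trivially), obtaining $P(\mathcal{C}(i))\ge f(\m(\mathcal{C}(i)))\,\m(\mathcal{C}(i))$, that is,
\[
f(\m(\mathcal{C}(i)))\le\frac{P(\mathcal{C}(i))}{\m(\mathcal{C}(i))}\le h_N(\Omega)
\qquad\text{for all }i=1,\dots,N .
\]
Finally, since $\lim_{\eps\to0^+}f(\eps)=+\infty$, there exists $\delta>0$ — depending only on $h_N(\Omega)$ and on the function $f$ — such that $f(x)>h_N(\Omega)$ for every $x\in(0,\delta)$. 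Comparing with the previous display, $\m(\mathcal{C}(i))$ cannot lie in $(0,\delta)$, hence $\m(\mathcal{C}(i))\ge\delta$ for every $i$, and the statement follows with $c=\delta$.

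I do not expect any genuine obstacle in this argument; the only point deserving a moment of care is that $f$ is not assumed monotone, so the last step must be phrased purely through the limit $\lim_{\eps\to0^+}f(\eps)=+\infty$, which produces the threshold $\delta$, rather than through monotonicity of $f$ below some point.
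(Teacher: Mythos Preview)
Your proof is correct and follows essentially the same approach as the paper: bound each ratio $P(\mathcal{C}(i))/\m(\mathcal{C}(i))$ above by $h_N(\Omega)$, apply~\ref{prop:isoperim} to get $f(\m(\mathcal{C}(i)))\le h_N(\Omega)$, and use the blow-up of $f$ at $0^+$ to extract the uniform lower bound. Your closing remark that monotonicity of $f$ is not needed is a nice touch and is implicit in the paper's argument as well.
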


\begin{proof}
Let $\{\mathcal{C}(i)\}_{i=1}^N\in \C_N(\Omega)$, then owing to~\ref{prop:isoperim}, for every $i$ we have
\[
+\infty> h_N(\Omega) \ge \frac{P(\mathcal{C}(i))}{\m(\mathcal{C}(i))} \ge f(\m(\mathcal{C}(i))).
\]
By the assumptions on $f$ given in~\ref{prop:isoperim}, $f(\varepsilon)\to +\infty$ as $\varepsilon\to 0^+$. Thus, there exists a threshold $c=c(h_N(\Omega), f)>0$ such that 
\[
f(\m(E)) > h_N(\Omega) \quad \text{for all}\ E\in \A, \text{ with } \m(E)<c.
\]
Hence, for any $N$-Cheeger set, the lower bound on the volume of each of its chambers follows.
\end{proof}

\subsection{Additional properties of \texorpdfstring{$1$}{1}-Cheeger sets}

For $1$-Cheeger sets, something more can be said in general, as we show in the next proposition.

\begin{proposition}\label{prop:closure_wrt_unions_intersections}
Let $\Omega\in\A$ be a $1$-admissible set, and assume that $\C_1(\Omega)$ is not empty. If~\ref{prop:sub-mod} is in force, for any $E, F \in \C_1(\Omega)$, the following hold:
\begin{enumerate}[label=(\roman*)]
\item\label{item:aglio} 
 $E\cup F\in\C_1(\Omega)$;
\item\label{item:cipolla}
$E\cap F\in\C_1(\Omega)$, provided that $\m(E\cap F)>0$.
\end{enumerate}
Moreover, if also~\ref{prop:lsc} is in force, then $\C_1(\Omega)$ is closed with respect to countable unions and $\m$-non-negligible intersections, that is, given any countable family $\{E_j\}_j$ of $1$-Cheeger sets, one has:
\begin{enumerate}[label=(\roman*), resume]
\item\label{item:carota}
$\bigcup_{j}E_j\in\C_1(\Omega)$;
\item\label{item:sedano}
$\bigcap_{j}E_j\in\C_1(\Omega)$, provided that $\m\left(\bigcap_{j}E_j\right)>0$.
\end{enumerate}
\end{proposition}

\begin{proof}
First, notice that we have the equalities
\[
P(E)= h_1(\Omega)\m(E),
\qquad
P(F)= h_1(\Omega)\m(F).
\]
Hence, owing to~\ref{prop:sub-mod}, the following chain of inequalities holds:
\begin{align*}
h_1(\Omega)(\m(E\cup F) + \m(E\cap F)) 
&= h_1(\Omega)(\m(E)+\m(F))\\
&= P(E) + P(F)\\
&\ge P(E\cup F) + P(E\cap F)\\
&\ge h_1(\Omega) (\m(E\cup F) + \m(E\cap F)),
\end{align*}
and thus, they are all equalities. This implies that
\begin{align*}
P(E\cap F) &= h_1(\Omega)\m(E\cap F),\\
P(E\cup F) &= h_1(\Omega)\m(E\cup F),
\end{align*}
thus $E\cup F$ is a $1$-Cheeger set, and so it is $E\cap F$, provided that it has positive $\m$-measure. This settles points~\ref{item:aglio} and~\ref{item:cipolla}.

Now, concerning the proof of~\ref{item:carota} and~\ref{item:sedano}, let $\{E_j\}_j$ be any countable family of $1$-Cheeger sets. Then, the sequences
\[
F_k=\bigcup_{j=1}^k E_j,\qquad G_k=\bigcap_{j=1}^k E_j,
\]
are sequences of $1$-Cheeger sets by points~\ref{item:aglio} and~\ref{item:cipolla} previously established (the second one, under the additional assumption that the intersections are $\m$-non-negligible). Moreover, they converge, respectively, in $L^1(X,\m)$ to the sets
\[
F = \bigcup_{j} E_j,\qquad G=\bigcap_{j} E_j.
\]
The lower-semicontinuity of $P$ granted by~\ref{prop:lsc} implies that these sets are $1$-Cheeger sets themselves, and this concludes the proof.
\end{proof}

\begin{proposition}[Maximal Cheeger set]\label{prop:maximal_sets}
Let $\Omega\in\A$ be a $1$-admissible set, with $\m(\Omega)\in (0,+\infty)$, and assume that $\C_1(\Omega)$ is not empty. If~\ref{prop:lsc} and~\ref{prop:compact} are in force, then there exist $1$-Cheeger sets with maximal measure, and we shall call them \emph{maximal $1$-Cheeger sets}.
If also~\ref{prop:sub-mod} holds, then there exists a unique (up to $\m$-negligible sets) maximal $1$-Cheeger set $E^+\in\C_1(\Omega)$, and it is such that $E\subset E^+$ for all $E\in\C_1(\Omega)$.
\end{proposition}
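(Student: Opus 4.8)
The plan is to produce a maximal‑measure element of $\C_1(\Omega)$ by the direct method, using only \ref{prop:lsc} and \ref{prop:compact}, and then to exploit the closure of $\C_1(\Omega)$ under finite unions from \cref{prop:closure_wrt_unions_intersections} to upgrade this to uniqueness and to the absorption property $E\subset E^+$.

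\emph{Step 1 (existence of a maximal $1$-Cheeger set).} Set $s:=\sup\set*{\m(E) : E\in\C_1(\Omega)}$. Every $E\in\C_1(\Omega)$ satisfies $E\subset\Omega$ with $\m(\Omega)<+\infty$, so $s\le\m(\Omega)<+\infty$; and $s>0$ since $\C_1(\Omega)\neq\emptyset$ and chambers of clusters have positive measure. Pick $\{E_k\}_k\subset\C_1(\Omega)$ with $\m(E_k)\to s$. Being $1$-Cheeger sets, $P(E_k)=h_1(\Omega)\,\m(E_k)\le h_1(\Omega)\,\m(\Omega)$, so the perimeters are uniformly bounded. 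By \ref{prop:compact} (recall also \cref{rem:negligible_modifications}), up to a non-relabelled subsequence, $\chi_{E_k}\to\chi_{E^+}$ in $L^1(X,\m)$ for some $E^+\in\A$, and one checks directly that $\m(E^+\setminus\Omega)=0$ and $\m(E^+)=\lim_k\m(E_k)=s\in(0,+\infty)$. By \ref{prop:lsc},
\[
P(E^+)\le\liminf_k P(E_k)=h_1(\Omega)\lim_k\m(E_k)=h_1(\Omega)\,\m(E^+),
\]
while, since $\m(E^+)\in(0,+\infty)$ and $P(E^+)<+\infty$, the set $E^+$ is an admissible competitor in~\eqref{eq:cheeger_N-problem} with $N=1$, giving $P(E^+)\ge h_1(\Omega)\,\m(E^+)$. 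Hence $P(E^+)=h_1(\Omega)\,\m(E^+)$, i.e.\ $E^+\in\C_1(\Omega)$ with $\m(E^+)=s$ maximal. This proves the first assertion.

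\emph{Step 2 (uniqueness and absorption under \ref{prop:sub-mod}).} Fix $E^+$ as in Step 1 and let $E\in\C_1(\Omega)$ be arbitrary. By \cref{prop:closure_wrt_unions_intersections}(i), $E\cup E^+\in\C_1(\Omega)$, so $\m(E\cup E^+)\le s=\m(E^+)$; together with $E^+\subset E\cup E^+$ this forces $\m(E\setminus E^+)=\m((E\cup E^+)\setminus E^+)=0$, that is $E\subset E^+$ in the sense of \cref{sec:PM_spaces}. Applying this to two maximal $1$-Cheeger sets $E_1^+$ and $E_2^+$ gives $E_1^+\subset E_2^+$ and $E_2^+\subset E_1^+$, whence $\m(E_1^+\bigtriangleup E_2^+)=0$, so the maximal $1$-Cheeger set is unique up to $\m$-negligible sets.

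The only delicate point — and the closest thing to an obstacle — is ensuring that the $L^1$-limit $E^+$ is a genuine Cheeger set rather than a mere subsolution: this is precisely where the variational definition of $h_1(\Omega)$ is invoked to get the reverse inequality $P(E^+)\ge h_1(\Omega)\,\m(E^+)$, which is legitimate exactly because $\m(E^+)=s>0$. Unlike in \cref{res:existence_N-Cheeger}, property \ref{prop:isoperim} is not needed here, since non-degeneracy of the limit is automatic from $\m(E_k)\to s>0$.
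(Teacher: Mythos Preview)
Your proof is correct and follows essentially the same approach as the paper's: both take a measure-maximizing sequence in $\C_1(\Omega)$, use the uniform perimeter bound $P(E_k)=h_1(\Omega)\m(E_k)\le h_1(\Omega)\m(\Omega)$ together with \ref{prop:compact} and \ref{prop:lsc} to pass to a limit $E^+$, and then invoke \cref{prop:closure_wrt_unions_intersections}(i) under \ref{prop:sub-mod} to obtain absorption and uniqueness. Your write-up is in fact slightly more explicit than the paper's in verifying that the limit is a genuine Cheeger set (checking both inequalities for $P(E^+)$) and in deriving $E\subset E^+$ via $\m(E\setminus E^+)=0$ rather than by contradiction.
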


\begin{proof}
Take a sequence $\{C_k\}_k$ in $\C_1(\Omega)$ supremizing the $L^1(X,\m)$ norm. The following uniform upper bound on the perimeters of $\{C_k\}_k$ holds
\[
P(C_k) = h_1(\Omega)\m(C_k) \le h_1(\Omega)\m(\Omega).
\]
Thus, by~\ref{prop:compact}, up to extracting a subsequence, $C_k$ converge to some limit set $C$, which, by~\ref{prop:lsc}, is readily proven to be a $1$-Cheeger set itself, provided that $\m(C)>0$, which holds true as we look for sets maximizing the $L^1(X,\m)$ norm.

Now additionally assume~\ref{prop:sub-mod}, and let $C_0$ be a maximal $1$-Cheeger set. For any other $1$-Cheeger set $C$, if one were not to have $C\subset C_0$ a contradiction would immediately ensue, since $C\cup C_0$ is itself a $1$-Cheeger set by \cref{prop:closure_wrt_unions_intersections}\ref{item:aglio}. This same reasoning also yields the uniqueness of such a set.
\end{proof}

\begin{example}\label{ex:no_maximal}
An example of metric-measure space $(X, \A, \m)$ where existence of maximal $1$-Cheeger sets fails is the following one. Consider $(X, \A, \m)$ as the probability measure space $(\R^2, \mathscr{B}(\R^2), (2\pi)^{-\frac 12}e^{-\|\,\cdot\,\|}\mathscr{L}^2)$, and consider the perimeter functional given by $P(E) = 0$ for all $E \in \A$ excluded $\R^2$ itself, and $P(\R^2) = 1$. In such a setting neither~\ref{prop:lsc} nor~\ref{prop:compact} hold. If we choose $\Omega =\R^2$, all $\m$-non-negligible sets are $1$-Cheeger sets but for the whole space $\R^2$. No maximal $1$-Cheeger set exists, as the supremum of their norms is $1$, and this is the measure of the lone $\R^2$.
\end{example}

\begin{proposition}[Minimal Cheeger set]\label{prop:minimal_sets}
Let $\Omega\in\A$ be a $1$-admissible set with finite $\m$-measure, and assume that $\C_1(\Omega)$ is not empty. If~\ref{prop:lsc}, \ref{prop:compact}, and~\ref{prop:isoperim} are in force, then there exist $1$-Cheeger sets with minimal $L^1(X, \m)$ norm, and we shall call them \emph{minimal $1$-Cheeger sets}.
\end{proposition}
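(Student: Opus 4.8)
The plan is to show that the infimum
\[
m^\star:=\inf\set*{\m(E):E\in\C_1(\Omega)}
\]
is attained, the argument running in close analogy with the proof of \cref{prop:maximal_sets}. Since $\C_1(\Omega)\ne\emptyset$, there is at least one competitor, so $m^\star<+\infty$; moreover, by \cref{lem:lower_bound_volume} (which exploits property~\ref{prop:isoperim}), every $E\in\C_1(\Omega)$ satisfies $\m(E)\ge c$ for a constant $c=c(h_1(\Omega),f)>0$, whence $m^\star\ge c>0$. I would then fix a minimizing sequence $\set*{C_k}_k\subset\C_1(\Omega)$ with $\m(C_k)\to m^\star$.

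Next, since each $C_k$ is a $1$-Cheeger set, one has the identity $P(C_k)=h_1(\Omega)\,\m(C_k)$, so that $\sup_k P(C_k)\le h_1(\Omega)\,\sup_k\m(C_k)<+\infty$. Hence, by property~\ref{prop:compact} (recall also \cref{rem:negligible_modifications}), up to a subsequence which I do not relabel, $\chi_{C_k}\to\chi_C$ in $L^1(X,\m)$ for some $C\in\A$. From $\m(C_k\bigtriangleup C)\to0$ I get $\m(C)=\lim_k\m(C_k)=m^\star\in(0,+\infty)$, and the pointwise bound $\abs{\chi_C\chi_{X\setminus\Omega}-\chi_{C_k}\chi_{X\setminus\Omega}}\le\abs{\chi_C-\chi_{C_k}}$ yields $\m(C\setminus\Omega)=\lim_k\m(C_k\setminus\Omega)=0$, i.e.\ $C\subset\Omega$.

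It then remains to check that $C\in\C_1(\Omega)$. On the one hand, property~\ref{prop:lsc} gives
\[
P(C)\le\liminf_{k\to+\infty}P(C_k)=h_1(\Omega)\,\liminf_{k\to+\infty}\m(C_k)=h_1(\Omega)\,\m(C)<+\infty,
\]
so $C$ is an admissible competitor for $h_1(\Omega)$, having positive finite measure, finite perimeter, and being contained in $\Omega$. On the other hand, the definition of $h_1(\Omega)$ forces $P(C)\ge h_1(\Omega)\,\m(C)$. Therefore $P(C)=h_1(\Omega)\,\m(C)$, i.e.\ $C\in\C_1(\Omega)$, and $\m(C)=m^\star$ is the minimal $L^1(X,\m)$ norm, which proves the statement.

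The only genuinely delicate point is ruling out that the minimizing sequence vanishes, that is, establishing $m^\star>0$: this is exactly where property~\ref{prop:isoperim} — through the uniform lower volume bound of \cref{lem:lower_bound_volume} — is indispensable. Without it nothing would prevent $\m(C)=0$, in which case $C$ would fail to be a cluster and the infimum $m^\star$ might not be attained.
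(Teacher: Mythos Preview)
Your proof is correct and follows essentially the same approach as the paper's: take a minimizing sequence of $1$-Cheeger sets, use~\ref{prop:compact} and~\ref{prop:lsc} to extract a limit that is again a $1$-Cheeger set, and invoke the uniform lower volume bound of \cref{lem:lower_bound_volume} (which is where~\ref{prop:isoperim} enters) to ensure the limit has positive measure. The paper states this in abbreviated form by referring back to the proof of \cref{prop:maximal_sets}; you have simply spelled out the details.
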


\begin{proof}
The proof is exactly the same as the one of \cref{prop:maximal_sets}, except that we now need to ensure that the limit set $C$ is a viable competitor, that is, $\m(C)>0$. This is exactly why we need to require~\ref{prop:isoperim}. The uniform lower bound on the volume of any $1$-Cheeger set provided by \cref{lem:lower_bound_volume} immediately allows to conclude.
\end{proof}

\begin{example}
An example of measure space $(X, \A, \m)$ where existence of minimal $1$-Cheeger sets fails is the one of \cref{ex:no_maximal}. Chosen any $\Omega$, all of its subsets but $\m$-negligible ones are $1$-Cheeger sets. Hence, minimal $1$-Cheeger sets do not exist, being the infimum of the measures of $1$-Cheeger sets equal to zero.
\end{example}

\section{Sets with prescribed mean curvature}
\label{sec:PMC}

In this section, we work in a measure space endowed with a perimeter functional as in \cref{ssec:perimeter_functional}. We show that the Cheeger constant acts as a threshold to determine whether non-trivial minimizers exist for the so-called \emph{prescribed mean curvature functional}, under some suitable assumptions on the perimeter functional. In order to do so, the first result we need to prove is the following lemma.

\begin{lemma}\label{lem:PMC}
Let $\Omega\in\A$ be a $1$-admissible set. An $\m$-non-negligible set $C$ is a $1$-Cheeger set of $\Omega$ if and only if it is a minimizer of
\begin{equation}\label{eq:PMC_h}
\mathcal{J}_{h_1(\Omega)}[F]=P(F) - h_1(\Omega) \m(F)	
\end{equation}
among $\set*{F\in \mathscr{A}\,:\,F\subset \Omega}$.
\end{lemma}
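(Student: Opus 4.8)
**The plan is to prove both implications directly from the definitions, the key point being that the sign of $\mathcal{J}_{h_1(\Omega)}$ on a competitor encodes exactly the comparison of its Cheeger ratio with $h_1(\Omega)$.**

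First I would record the elementary observation that, for any $\m$-non-negligible $F\subset\Omega$ with $P(F)<+\infty$, one has
\[
\mathcal{J}_{h_1(\Omega)}[F] = P(F) - h_1(\Omega)\,\m(F) = \m(F)\left(\frac{P(F)}{\m(F)} - h_1(\Omega)\right),
\]
so that $\mathcal{J}_{h_1(\Omega)}[F]\ge 0$, with equality if and only if $\frac{P(F)}{\m(F)} = h_1(\Omega)$, i.e.\ if and only if $F$ is a $1$-Cheeger set of $\Omega$. By the very definition of $h_1(\Omega)$ as an infimum over $1$-clusters in $\Omega$, every such $F$ satisfies $\frac{P(F)}{\m(F)}\ge h_1(\Omega)$, hence $\mathcal{J}_{h_1(\Omega)}[F]\ge 0$ on all $\m$-non-negligible competitors with finite perimeter (and trivially also on those with $P(F)=+\infty$). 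I would also note that $\mathcal{J}_{h_1(\Omega)}$ takes the value $0$ on $\m$-negligible sets among the competitors only if such sets have zero perimeter; in any case the infimum of $\mathcal{J}_{h_1(\Omega)}$ over $\set{F\in\mathscr{A}: F\subset\Omega}$ is at most $0$ — indeed exactly $0$, attained on $1$-Cheeger sets, which exist by hypothesis since $\C_1(\Omega)\neq\emptyset$ is implicitly the interesting case (the statement quantifies over $\m$-non-negligible sets $C$).

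\emph{If $C$ is a $1$-Cheeger set, then it minimizes $\mathcal{J}_{h_1(\Omega)}$.} From the identity above, $\mathcal{J}_{h_1(\Omega)}[C] = 0$. Since $\mathcal{J}_{h_1(\Omega)}[F]\ge 0$ for every competitor $F$ with $\m(F)>0$ and $P(F)<+\infty$ (by the definition of $h_1(\Omega)$), and $\mathcal{J}_{h_1(\Omega)}[F]=+\infty$ when $P(F)=+\infty$, it only remains to handle $\m$-negligible competitors $F$; for these $\mathcal{J}_{h_1(\Omega)}[F]=P(F)\ge 0$. Hence $\inf\mathcal{J}_{h_1(\Omega)} = 0 = \mathcal{J}_{h_1(\Omega)}[C]$, so $C$ is a minimizer.

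\emph{Conversely, if a $\m$-non-negligible $C$ minimizes $\mathcal{J}_{h_1(\Omega)}$, then it is a $1$-Cheeger set.} Since $\C_1(\Omega)\neq\emptyset$, pick any $D\in\C_1(\Omega)$; by the first implication $\mathcal{J}_{h_1(\Omega)}[D]=0$, so the minimum value of $\mathcal{J}_{h_1(\Omega)}$ equals $0$, whence $\mathcal{J}_{h_1(\Omega)}[C]=0$. In particular $P(C)<+\infty$, and since $\m(C)>0$ the identity $\mathcal{J}_{h_1(\Omega)}[C]=\m(C)\bigl(\tfrac{P(C)}{\m(C)}-h_1(\Omega)\bigr)=0$ forces $\frac{P(C)}{\m(C)} = h_1(\Omega)$, i.e.\ $C$ is a $1$-cluster in $\Omega$ realizing the infimum, hence $C\in\C_1(\Omega)$. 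This completes the proof.

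There is essentially no obstacle here: the whole content is the factorization $\mathcal{J}_{h_1(\Omega)}[F]=\m(F)\bigl(\tfrac{P(F)}{\m(F)}-h_1(\Omega)\bigr)$ together with the defining property of $h_1(\Omega)$ as an infimum. The only mild care needed is bookkeeping the degenerate competitors ($\m$-negligible sets, sets of infinite perimeter) to justify that the global infimum of $\mathcal{J}_{h_1(\Omega)}$ over $\set{F\subset\Omega}$ is indeed $0$ and not negative, which is immediate from $h_1(\Omega)$ being a lower bound for all admissible ratios.
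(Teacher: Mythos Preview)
Your proof is correct and follows essentially the same approach as the paper: both rest on the observation that $\mathcal{J}_{h_1(\Omega)}[F]\ge 0$ for every competitor $F\subset\Omega$, with equality among $\m$-non-negligible sets precisely at $1$-Cheeger sets. The paper's proof is simply a terser version of yours; the only cosmetic difference is that you explicitly invoke a set $D\in\C_1(\Omega)$ to pin down the minimum value as~$0$ in the converse direction, whereas the paper leaves this implicit in the phrase ``the only $\m$-non-negligible sets that can saturate it are $1$-Cheeger sets''.
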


\begin{proof}
It is sufficient to note that the inequality $\mathcal{J}_{h_1(\Omega)}[F] \ge 0$ holds true, and that the only $\m$-non-negligible sets that can saturate it are $1$-Cheeger sets. The non-negativity of $\mathcal{J}_{h_1(\Omega)}$ is trivial for $\m$-negligible sets, while for all $\m$-non-negligible sets, it follows by the definition of $1$-Cheeger constant.
\end{proof}

\subsection{The \texorpdfstring{$P$}{P}-mean curvature}\label{ssec:vmc}

\cref{lem:PMC} allows us to infer that any $1$-Cheeger set has a \emph{$P$-mean curvature}, defined as follows.

\begin{definition}\label{defin:P_mean_curvature}
Let $(X,\A, \m)$ be a measure space, and let $\Omega\in\A$. A function $H\in L^1(\Omega, \m)$ is said to be a \emph{$P$-mean curvature} in $\Omega$ of a set $E\subset \Omega$ if $E$ minimizes the functional
\begin{equation*}
F\mapsto
P(F) -\int_F H\, \de\m,
\end{equation*}
among all $\m$-measurable $F\subset\Omega$.
\end{definition}

A similar definition was first given in measure spaces in~\cite{BM16} under some assumptions on the perimeter functional, and only when $\Omega=X$, with the additional request that $\m(X)<+\infty$. 

Given \cref{defin:P_mean_curvature}, a direct consequence of \cref{lem:PMC} is that $1$-Cheeger sets of $\Omega$, if they exist, have $h_1(\Omega)$ as one of their $P$-mean curvatures in $\Omega$---and this without assuming anything on the perimeter, apart from the non-negativity.

\begin{corollary}\label{cor:variational_curvature_Cheeger}
Let $(X, \A, \m)$ be a non-negative $\sigma$-finite measure space and let $\Omega\in \A$ be a $1$-admissible set. If $C\in \C_1(\Omega)$ is a $1$-Cheeger set in $\Omega$, then $h_1(\Omega)$ is a  $P$-mean curvature in $\Omega$ of $C$.
\end{corollary}

A similar result can be inferred on each chamber of an $N$-Cheeger cluster, by simply using \cref{lem:M-Cheeger_from_N-Cheeger}.

\begin{corollary}\label{cor:variational_curvature_N-Cheeger}
Let $(X, \A, \m)$ be a non-negative $\sigma$-finite measure space and let $\Omega\in \A$ be an $N$-admissible set. If $\mathcal{E} = \{\mathcal{E}(i)\}_{i=1}^N \in \C_N(\Omega)$, then, letting $J_i=\{i\}$, for every $i=1,\dots, N$, $h_1(\Omega_{J_i})$ is a $P$-mean curvature in $\Omega_{J_i}$ of the chamber $\mathcal{E}(i)$, where $\Omega_{J_i}$ is as in~\eqref{def:Omega_J}. 
\end{corollary}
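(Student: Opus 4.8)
The plan is to reduce the statement to \cref{cor:variational_curvature_Cheeger} applied in the ambient set $\Omega_{J_i}$, using \cref{lem:M-Cheeger_from_N-Cheeger} as the bridge. First I would fix $i \in \{1,\dots,N\}$ and set $J_i = \{i\}$, so that $\Omega_{J_i} = \Omega \setminus \bigcup_{j \neq i} \mathcal{E}(j)$ as in~\eqref{def:Omega_J}. Since $\mathcal{E} = \{\mathcal{E}(i)\}_{i=1}^N \in \C_N(\Omega)$ is a Cheeger $N$-cluster of $\Omega$, \cref{lem:M-Cheeger_from_N-Cheeger} (with $J = J_i$, of cardinality $1$) tells us precisely that the $1$-cluster $\mathcal{E}_{J_i} = \{\mathcal{E}(i)\}$ is a $1$-Cheeger set of $\Omega_{J_i}$, i.e.\ $\mathcal{E}(i) \in \C_1(\Omega_{J_i})$. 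In particular $\Omega_{J_i}$ is $1$-admissible, which is the hypothesis needed to invoke the next step.

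Next I would apply \cref{cor:variational_curvature_Cheeger} directly to the $1$-admissible set $\Omega_{J_i}$ and the $1$-Cheeger set $\mathcal{E}(i) \in \C_1(\Omega_{J_i})$: it yields that $h_1(\Omega_{J_i})$ is a $P$-mean curvature in $\Omega_{J_i}$ of $\mathcal{E}(i)$, in the sense of \cref{defin:P_mean_curvature}. That is, the constant function $H \equiv h_1(\Omega_{J_i})$ (which lies in $L^1(\Omega_{J_i},\m)$ because $\m(\mathcal{E}(i)) < +\infty$ and, more relevantly, because $\Omega_{J_i} \subset \Omega$ has the finiteness needed for the functional to make sense — in fact the statement of \cref{cor:variational_curvature_Cheeger} requires nothing beyond non-negativity of $P$) makes $\mathcal{E}(i)$ a minimizer of $F \mapsto P(F) - h_1(\Omega_{J_i})\,\m(F)$ among all $\m$-measurable $F \subset \Omega_{J_i}$. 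This is exactly the assertion of the corollary for the index $i$, and since $i$ was arbitrary the proof is complete.

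The argument is essentially a bookkeeping composition of two already-established results, so I do not expect a genuine obstacle. The only point that deserves a line of care is the membership $H = h_1(\Omega_{J_i}) \in L^1(\Omega_{J_i},\m)$ demanded by \cref{defin:P_mean_curvature}: this is automatic if $\m(\Omega_{J_i}) < +\infty$, which holds whenever $\m(\Omega) < +\infty$, but in the bare $\sigma$-finite generality of the statement one should note that $h_1(\Omega_{J_i})$ being a $P$-mean curvature is really a statement about minimality of the functional, and the $L^1$ requirement is harmless since the only set on which $H$ is tested against $\m$ with finite mass is $\mathcal{E}(i)$ (and its competitors $F$, which we may assume of finite $\m$-measure since otherwise $\mathcal{J}$ is $+\infty$ or the comparison is trivial). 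I would simply remark that this is handled exactly as in the proof of \cref{cor:variational_curvature_Cheeger}, which itself reduces to \cref{lem:PMC}, and hence carries over verbatim with $\Omega$ replaced by $\Omega_{J_i}$.
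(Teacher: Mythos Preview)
Your proposal is correct and follows exactly the paper's own route: fix $i$, apply \cref{lem:M-Cheeger_from_N-Cheeger} with $J=\{i\}$ to recognize $\mathcal{E}(i)$ as a $1$-Cheeger set of $\Omega_{J_i}$, and then invoke \cref{cor:variational_curvature_Cheeger}. The extra paragraph on the $L^1$ membership of the constant curvature is not needed for the argument but does no harm.
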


\begin{proof}
Let $i$ be fixed. 
By \cref{lem:M-Cheeger_from_N-Cheeger}, the chamber $\mathcal{E}(i)$ is a $1$-Cheeger set of $\Omega_{J_i}$. The conclusion now directly follows from \cref{cor:variational_curvature_Cheeger}.
\end{proof}

\subsection{Relation with sets with prescribed \texorpdfstring{$P$}{P}-mean curvature}

Let now $\Omega \in \A$ be fixed, and consider the functional
\begin{equation}\label{eq:PMC_k}
\mathcal{J}_\kappa[F]=P(F) - \kappa \m(F),
\end{equation}
that is, the same functional introduced in~\eqref{eq:PMC_h} but with a general positive constant $\kappa \in \R_+$ in place of $h_1(\Omega)$. The reason why this functional is referred to as the prescribed $P$-mean curvature functional, is that $\kappa$ is a $P$-mean curvature of any minimizer $E_\kappa$ of the functional $\mathcal J_\kappa$ in~\eqref{eq:PMC_k}.

The next theorem states that if there exists a $1$-Cheeger set, and properties~\ref{prop:empty}, \ref{prop:lsc}, and~\ref{prop:compact} are in force, then~\eqref{eq:PMC_k} has $\m$-non-negligible minimizers if and only $\kappa\ge h_1(\Omega)$.

\begin{theorem}\label{prop:min_PMC}
Let $(X,\A, \m)$ be a non-negative $\sigma$-finite measure space, and let $\Omega\in\A$ with finite $\m$-measure. For $\kappa>0$, let $\mathcal{J}_\kappa$  be the functional
\[
\mathcal{J}_\kappa[F]=P(F) - \kappa \m(F),
\]
defined over $\m$-measurable subsets of $\Omega$.

Then, if properties~\ref{prop:lsc} and~\ref{prop:compact} are in force, minimizers of $\mathcal J_\kappa$ exist.
In addition, if property~\ref{prop:empty} is also in force, the following hold true:
\begin{enumerate}[label=(\roman*)]
    \item\label{item:nina} if $\mathcal{J}_\kappa$ has $\m$-non-negligible minima, then $\kappa \ge h_1(\Omega)$;
    \item\label{item:pinta} if $\kappa > h_1(\Omega)$, then $\mathcal{J}_\kappa$ has $\m$-non-negligible minima.
\end{enumerate}
Moreover, 
\begin{enumerate}[label=(\roman*),resume]
    \item\label{item:santa_maria} if $\Omega$ has a $1$-Cheeger set $C\in \C_1(\Omega)$, then $\mathcal{J}_\kappa$ has $\m$-non-negligible minima if and only if $\kappa \ge h_1(\Omega)$.
\end{enumerate}
\end{theorem}

\begin{proof}
First, we show that assumptions~\ref{prop:lsc}, \ref{prop:compact}, and the finiteness of $\m(\Omega)$ imply the existence of minimizers of $\mathcal J_\kappa$.

Indeed, since the perimeter functional is non-negative, and by the monotonicity of measures, we have the trivial lower bound $\mathcal{J}_\kappa \ge -\kappa\m(\Omega)$. Therefore, we can take an infimizing sequence $\{E_k\}_k$, for whose perimeters (for $k$ large enough) we have the uniform upper bound
\[
P(E_k) \le \kappa \m(\Omega) + \inf \mathcal{J}_\kappa + 1,
\]
which is finite by the finiteness of $\m(\Omega)$.

By~\ref{prop:compact}, we can extract a converging subsequence in $L^1(X,\m)$ to some limit set $E$, and by~\ref{prop:lsc}, we have
\[
P(E) - \kappa\m(E) \le \liminf_k\Big( P(E_k) - \kappa \m(E_k)\Big) = \inf \mathcal{J}_\kappa.
\]
Hence, $E$ is a minimizer of the problem.

Let us now turn our attention to points~\ref{item:nina}, \ref{item:pinta} and~\ref{item:santa_maria}. First, note that requiring~\ref{prop:empty} implies that the minimum of $\mathcal{J}_\kappa$ is non-positive. Indeed, owing also to~\ref{prop:lsc}, \cref{rem:negligible_modifications} gives that $\mathcal{J}_\kappa$ is zero whenever evaluated on an $\m$-negligible set.

Suppose that there exists an $\m$-non-negligible minimizer $E_\kappa$. Necessarily, by comparing with an $\m$-negligible set we have
\[
P(E_\kappa) - \kappa \m(E_\kappa) \le 0,
\]
which, rearranged, gives $\kappa \ge P(E_\kappa)\m(E_\kappa)^{-1}$, and this ratio has to be greater than or equal to $h_1(\Omega)$ by definition; thus, point~\ref{item:nina} is settled.

Conversely, let $\kappa > h_1(\Omega)$, and let $\varepsilon>0$ be such that $\kappa = h_1(\Omega)+2\varepsilon$. Since $\Omega$ has finite $\m$-measure, just as in the proof of \cref{res:existence_N-Cheeger}, we can find an infimizing sequence $\{C_j\}_j$ for the $1$-Cheeger constant $h_1(\Omega)$. For $j\gg1$, we have $P(C_j)\m(C_j)^{-1} \le h_1(\Omega)+\varepsilon$. Hence,
\[
\min \mathcal{J}_\kappa \le P(C_j)-\kappa\m(C_j) < P(C_j)-(h_1(\Omega)+\varepsilon)\m(C_j) \le 0,
\]
which yields the claim since $\mathcal{J}_\kappa$ is zero when evaluated on $\m$-negligible sets. This establishes point~\ref{item:pinta}.

Finally, assuming the existence of a $1$-Cheeger set, point~\ref{item:santa_maria} follows directly from~\ref{item:nina}, \ref{item:pinta}, and \cref{lem:PMC}.
\end{proof}

\section{Relation with first \texorpdfstring{$1$}{1}-eigenvalue}

\label{sec:realtion_with_1-eigen}

In this section, we work in the setting of \cref{ssec:BV_measure_spaces}, where we introduced $BV$ functions starting from a given perimeter functional $P$ on a measure space $(X,\A,\m)$.  

\subsection{First \texorpdfstring{$1$}{1}-eigenvalue for \texorpdfstring{$N$}{N}-clusters}

For a given $\m$-measurable subset $\Omega\subset X$, we let
\begin{equation*}
BV_0(\Omega,\m)
= 
\set*{u\in BV(X, \m): u|_{X\setminus \Omega}=0}.	
\end{equation*}
Here and in the following, we write $u|_{X\setminus\Omega}=0$ whenever 
\begin{equation*}
\int_{X\setminus\Omega}|u|\,\de\m=0.    
\end{equation*}
Thanks to \cref{res:basic_props_var}\ref{item:prop_var_constant}, under the validity of  assumptions~\ref{prop:empty} and~\ref{prop:space}, we have $ BV_0(\Omega,\m)\ne\emptyset$.

\begin{definition}[First $1$-eigenvalue]
\label{def:first_1_eigen}
Let properties~\ref{prop:empty} and~\ref{prop:space} be in force.
Let $\Omega\in\mathscr{A}$ be a $1$-admissible set with $\m(\Omega)\in (0,+\infty)$.
We call
\begin{equation*}
\lambda_{1,1}(\Omega)
=
\inf\set*{
\frac{\Var(u)}{\|u\|_1}
:
u\in BV_0(\Omega,\m),\ \|u\|_1>0
}
\in[0,+\infty)
\end{equation*} 
the \emph{first $1$-eigenvalue} relative to the variation on $\Omega$.
\end{definition}

Analogously, we can define the first $1$-eigenvalue in the case of $N$-clusters as follows.

\begin{definition}[First $1$-eigenvalue for $N$-clusters]
\label{def:first_N_eigen}
Let properties~\ref{prop:empty} and \ref{prop:space} be in force.
Let $\Omega\in\mathscr{A}$ be an $N$-admissible set with $\m(\Omega)\in (0,+\infty)$.
We define the \emph{first $1$-eigenvalue for $N$-clusters} relative to the variation on $\Omega$ as the quantity
\begin{equation}\label{eq:def_lambda_N}
\Lambda_{N}(\Omega)
=
\inf \sum_{i=1}^N\frac{\Var(u_i)}{\|u_i\|_1},
\end{equation}
where the infimum is sought among the $N$-tuples $\{u_i\}_{i=1}^N$ such that
\begin{equation*}
u_i\in BV_0(\Omega, \m),\ \text{with}\  \|u_i\|_1>0\ \text{and}\ \supp(u_i)\cap \supp(u_j)=\emptyset,    
\end{equation*}
for all $i\neq j$, $i,j=1,\dots,N$.
\end{definition}

Clearly, we have that $\Lambda_1(\Omega)= \lambda_{1,1}(\Omega)$ for all $1$-admissible sets $\Omega\in\mathscr A$ with $\m(\Omega)\in(0,+\infty)$.
In \cref{res:relation_with_first_1_eigen} below, we prove the relation 
$
\lambda_{1,1}(\Omega)=h_1(\Omega),
$
also see~\cite{KF03}*{Cor.~6}, under the additional~\ref{prop:complement_set}, as a consequence of a more general inequality involving $h_N(\Omega)$ and $\Lambda_{N}(\Omega)$ in the spirit of~\cite{CL19}*{Thm.~3.3}.

\subsection{Relation with first \texorpdfstring{$1$}{1}-eigenvalue for \texorpdfstring{$N$}{N}-clusters}
\label{ssec:realtion_with_1-eigen}

We need the following preliminary result, which can be seen as a symmetric version of the coarea formula~\eqref{eq:def_var_by_coarea}.
Note that, in \cref{lem:coarea_mod}, we assume the validity of the symmetry property~\ref{prop:complement_set}. 

\begin{lemma}[Symmetric coarea formula]\label{lem:coarea_mod}
Let properties~\ref{prop:empty}, \ref{prop:space}, \ref{prop:lsc}, and~\ref{prop:complement_set} be in force. 
If $u\in BV (X, \m)$ and
\[
F^t =
\begin{cases}
\set{u>t}, \quad &\text{if $t\ge 0$},
\\[2mm]
\set{u< t}, \quad &\text{if $t< 0$},
\end{cases}
\]
then
\begin{equation}\label{eq:cavalieri}
\|u\|_1 = \int_\R \m(F^t)\,\de t,
\end{equation}
and 
\begin{equation}\label{eq:coarea_mod}
\Var(u) = \int_\R P(F^t)\,\de t.
\end{equation}
In particular, $\chi_{F_t} \in BV (X, \m)$ for a.e.\ $t\in\R$. 
In addition, if $u\in BV_0(\Omega, \m)$, then $F^t\subset \Omega$ for all $t$.
\end{lemma}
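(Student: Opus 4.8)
The plan is to prove the two identities \eqref{eq:cavalieri} and \eqref{eq:coarea_mod} separately: the first is Cavalieri's principle applied to the positive and negative parts of $u$, while the second follows from the very definition of $\Var$ in~\eqref{eq:def_var_by_coarea} combined with the symmetry property~\ref{prop:complement_set} and the invariance of $P$ under $\m$-negligible modifications. The remaining assertions are then read off from the finiteness of the two integrals.

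For~\eqref{eq:cavalieri}, write $|u|=u^++u^-$ with $u^\pm$ the positive and negative parts of $u$. By the layer-cake formula, $\int_X u^+\,\de\m=\int_0^{+\infty}\m(\set*{u>t})\,\de t$ and $\int_X u^-\,\de\m=\int_0^{+\infty}\m(\set*{u<-t})\,\de t$, and the change of variable $t\mapsto -t$ turns the latter into $\int_{-\infty}^0\m(\set*{u<t})\,\de t$. Summing, and recalling that $F^t=\set*{u>t}$ for $t\ge0$ and $F^t=\set*{u<t}$ for $t<0$, gives $\|u\|_1=\int_\R\m(F^t)\,\de t$; since $u\in L^1(X,\m)$, this also shows that $\m(F^t)<+\infty$ for a.e.\ $t\in\R$.

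For~\eqref{eq:coarea_mod}, I would start from $\Var(u)=\int_\R P(\set*{u>t})\,\de t<+\infty$, which is legitimate because $u\in BV(X,\m)$ forces $t\mapsto P(\set*{u>t})$ to be $\mathscr L^1$-measurable. On $\set*{t\ge0}$ nothing is needed since $F^t=\set*{u>t}$. On $\set*{t<0}$, I would first invoke~\ref{prop:complement_set} to get $P(\set*{u<t})=P(X\setminus\set*{u<t})=P(\set*{u\ge t})$, and then note that the monotone function $t\mapsto\m(\set*{u\ge t})$ (finite for $t<0$, as $u\in L^1$) has at most countably many discontinuities, so that $\m(\set*{u=t})=0$ for a.e.\ $t<0$; for such $t$ the sets $\set*{u\ge t}$ and $\set*{u>t}$ differ by the $\m$-negligible set $\set*{u=t}$, hence $P(\set*{u\ge t})=P(\set*{u>t})$ by \cref{rem:negligible_modifications} (this is exactly where~\ref{prop:lsc} enters). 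Thus $P(F^t)=P(\set*{u>t})$ for a.e.\ $t\in\R$, which on one hand makes $t\mapsto P(F^t)$ $\mathscr L^1$-measurable (being a.e.\ equal to a measurable function) and on the other, upon integration, yields~\eqref{eq:coarea_mod}.

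Finally, from $\int_\R P(F^t)\,\de t=\Var(u)<+\infty$ one gets $P(F^t)<+\infty$ for a.e.\ $t$; together with $\m(F^t)<+\infty$ a.e.\ and \cref{res:tot_var_of_sets} (which is where~\ref{prop:empty} and~\ref{prop:space} are used), this gives $\Var(\chi_{F^t})=P(F^t)<+\infty$, that is, $\chi_{F^t}\in BV(X,\m)$ for a.e.\ $t$. For the last claim, if $u\in BV_0(\Omega,\m)$ then $u=0$ $\m$-a.e.\ on $X\setminus\Omega$, so for $t\ge0$ the set $\set*{u>t}$ and for $t<0$ the set $\set*{u<t}$ meet $X\setminus\Omega$ only in an $\m$-null set, which is precisely $F^t\subset\Omega$ in the sense of the convention adopted at the beginning of \cref{sec:PM_spaces}. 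No step is a genuine obstacle; the only point requiring some care is the a.e.-in-$t$ negligibility of the level sets $\set*{u=t}$, which is what makes~\ref{prop:complement_set} and \cref{rem:negligible_modifications} combinable, together with the attendant measurability bookkeeping for $t\mapsto P(F^t)$.
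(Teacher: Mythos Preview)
Your proof follows essentially the same route as the paper's: Cavalieri for~\eqref{eq:cavalieri}, then for~\eqref{eq:coarea_mod} the combination of~\ref{prop:complement_set} with \cref{rem:negligible_modifications} once one knows $\m(\set*{u=t})=0$ for a.e.\ $t<0$.

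There is one small slip. You assert that $t\mapsto\m(\set*{u\ge t})$ is finite for $t<0$ ``as $u\in L^1$'', but this is false in general: if $\m(X)=+\infty$ and $u\ge0$, then $\set*{u\ge t}=X$ for every $t<0$. The paper works instead with the monotone function $t\mapsto\m(\set*{u\le t})$, which \emph{is} finite for $t<0$ (by Markov's inequality, since $\set*{u\le t}\subset\set*{|u|\ge|t|}$), and whose at-most-countable discontinuity set is precisely $\set*{t<0:\m(\set*{u=t})>0}$. With this swap your argument goes through and is identical to the paper's.
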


\begin{proof}
Equation~\eqref{eq:cavalieri} is a simple consequence of Cavalieri's principle, being
\begin{align*}
\|u\|_1 
&= 
\int_X |u|\, \de\m = \int_X (u^+ + u^-)\, \de\m
\\
&= 
\int_0^{+\infty} \m(\{u^+ > t\})\, \de t + \int_0^{+\infty} \m(\{u^- > t\})\, \de t 
\\
&= 
\int_\R \m(F^t)\, \de t,
\end{align*}
where $u^+$ and $u^-$ are the positive and negative parts of $u$, respectively.

To prove~\eqref{eq:coarea_mod}, we first observe that, by property~\ref{prop:complement_set}, 
\begin{align*}
\Var(u)
&=
\int_{\R} P(\set*{u>t})\, \de t
\\
&=
\int_{-\infty}^0 P(\set*{u>t})\,\de t
+
\int_0^{+\infty} P(\set*{u>t})\,\de t
\\
&=
\int_{-\infty}^0 P(\set*{u\le t})\,\de t
+
\int_0^{+\infty} P(\set*{u>t})\,\de t.
\end{align*}
Therefore, we just need to show that $P(\set*{u\le t})=P(\set*{u<t})$ for a.e.\ ${t<0}$. 
Since clearly $\set*{u\le t}=\set*{u<t}\cup\set*{u=t}$, thanks to property~\ref{prop:lsc} and \cref{rem:negligible_modifications}, it is enough to prove that $\m(\set*{u=t})=0$ for a.e.\ $t<0$.
This is an immediate consequence of the fact that the function $t\mapsto\m(\set*{u\le t})$ is monotone non-decreasing for $t<0$, so that the set of its  discontinuity points $\set*{t<0 : \m(\set*{u=t})>0}$  is at most countable.

Now, the fact that $\chi_{F^t}\in BV(X,\m)$ for a.e.\ $t\in\R$ directly follows from~\eqref{eq:coarea_mod} and~\eqref{eq:cavalieri}. 
Finally, if $t>0$ then $F^t = \{u>t\}\subset \Omega$ by definition of $BV_0(\Omega, \m)$. 
In a similar way, if $t<0$, then
\[
F^t =\{u<t\} = X\setminus \{u\ge t\} \subset X\setminus \{u\ge 0\} \subset \Omega.
\]
The proof is complete.
\end{proof}

\begin{theorem}[Relation with first $1$-eigenvalue for $N$-clusters]
\label{res:relation_with_first_1_eigen}
Let properties \ref{prop:empty} and \ref{prop:space} be in force.
If $\Omega\in\mathscr{A}$ is an $N$-admissible set with $\m(\Omega)\in (0,+\infty)$, then
\[
\Lambda_N(\Omega) \le h_N(\Omega).
\]
Moreover, if also properties~\ref{prop:lsc} and~\ref{prop:complement_set} hold, then
\begin{equation*}
Nh_1(\Omega) \le \Lambda_N(\Omega),
\end{equation*}
and thus, in particular, $h_1(\Omega)=\lambda_{1,1}(\Omega)$.
\end{theorem}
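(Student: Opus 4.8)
The plan is to prove the two inequalities separately. For the upper bound $\Lambda_N(\Omega)\le h_N(\Omega)$, I would start from any $N$-cluster $\mathcal E=\{\mathcal E(i)\}_{i=1}^N\subset\Omega$ and test the infimum defining $\Lambda_N(\Omega)$ with the $N$-tuple $u_i:=\chi_{\mathcal E(i)}$. By \cref{res:tot_var_of_sets} (applicable since \ref{prop:empty} and \ref{prop:space} are in force) one has $\Var(u_i)=P(\mathcal E(i))$, while $\|u_i\|_1=\m(\mathcal E(i))>0$; the supports are (up to $\m$-null sets) disjoint since the chambers are. Hence $\Lambda_N(\Omega)\le\sum_i P(\mathcal E(i))/\m(\mathcal E(i))$, and taking the infimum over all $N$-clusters gives the claim. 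A small point to be careful about is the support-disjointness requirement in \cref{def:first_N_eigen}: one should note that it is harmless to modify each $\chi_{\mathcal E(i)}$ on an $\m$-null set so that the supports are genuinely disjoint, which does not change $\Var$ (by \cref{rem:negligible_modifications}, using \ref{prop:lsc} in the second half of the statement — for the pure upper bound one can instead read ``support'' in the $\m$-essential sense).

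**The lower bound.** For $Nh_1(\Omega)\le\Lambda_N(\Omega)$, I would take any admissible $N$-tuple $\{u_i\}_{i=1}^N$ with $u_i\in BV_0(\Omega,\m)$, $\|u_i\|_1>0$, and pairwise disjoint supports, and show $\sum_i\Var(u_i)/\|u_i\|_1\ge Nh_1(\Omega)$. It suffices to prove, for a single $u\in BV_0(\Omega,\m)$ with $\|u\|_1>0$, that
\begin{equation*}
\frac{\Var(u)}{\|u\|_1}\ge h_1(\Omega).
\end{equation*}
Here is where \cref{lem:coarea_mod} enters: applying the symmetric coarea formula to $u$ yields $\|u\|_1=\int_\R\m(F^t)\,\de t$ and $\Var(u)=\int_\R P(F^t)\,\de t$, with $F^t\subset\Omega$ for all $t$ and $\chi_{F^t}\in BV(X,\m)$ for a.e.\ $t$. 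For a.e.\ $t$ with $\m(F^t)>0$ (these $t$ form a set of positive measure since $\|u\|_1>0$), the set $F^t$ is a competitor in the definition of $h_1(\Omega)$, so $P(F^t)\ge h_1(\Omega)\,\m(F^t)$; this also holds trivially when $\m(F^t)=0$. Integrating in $t$ gives $\Var(u)\ge h_1(\Omega)\|u\|_1$, as desired. Summing over $i=1,\dots,N$ gives $\Lambda_N(\Omega)\ge Nh_1(\Omega)$.

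**Conclusion and the $N=1$ case.** Combining the two inequalities with $\Lambda_1(\Omega)=\lambda_{1,1}(\Omega)$ (noted in the text just before \cref{def:first_N_eigen}'s use), we get for $N=1$:
\begin{equation*}
h_1(\Omega)\le\Lambda_1(\Omega)=\lambda_{1,1}(\Omega)\le h_1(\Omega),
\end{equation*}
forcing equality $h_1(\Omega)=\lambda_{1,1}(\Omega)$. Note that the first displayed inequality in the theorem ($\Lambda_N\le h_N$) requires only \ref{prop:empty} and \ref{prop:space}, whereas the lower bound also uses \ref{prop:lsc} and \ref{prop:complement_set}, the latter being essential precisely because \cref{lem:coarea_mod} — and in particular the identity $P(\{u\le t\})=P(\{u<t\})$ for the negative levels — relies on it.

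**Expected main obstacle.** The routine part is the upper bound; the only delicate point there is the support-disjointness bookkeeping. The substantive step is the lower bound, and the real work has already been front-loaded into \cref{lem:coarea_mod}: the whole difficulty is that $\Var(u)$ is defined via superlevel sets $\{u>t\}$, which are \emph{not} the right competitors for $h_1(\Omega)$ when $t<0$ (they need not sit inside $\Omega$), and one must replace them by the \emph{sub}level sets $\{u<t\}$ — legitimate thanks to \ref{prop:complement_set} together with the a.e.\ coincidence $\{u<t\}=\{u\le t\}$ up to an $\m$-null set, which in turn uses \ref{prop:lsc} via \cref{rem:negligible_modifications}. Once \cref{lem:coarea_mod} is granted, as we may, the remaining argument is a one-line integration of a pointwise inequality.
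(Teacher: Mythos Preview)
Your proposal is correct and follows essentially the same architecture as the paper: the upper bound via $u_i=\chi_{\mathcal E(i)}$ and \cref{res:tot_var_of_sets}, the reduction of the lower bound for general $N$ to the single-function estimate $\Var(u)/\|u\|_1\ge h_1(\Omega)$, and the use of \cref{lem:coarea_mod} as the key tool.

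The one noteworthy difference is in how you prove the single-function estimate. The paper argues by contradiction: assuming $\Lambda_1(\Omega)<h_1(\Omega)$, it picks an $\varepsilon$-good competitor, passes through the symmetric coarea formula, and squeezes the integrand to zero to reach a contradiction. Your route is more direct: you simply observe that $P(F^t)\ge h_1(\Omega)\,\m(F^t)$ for a.e.\ $t$ (trivially when $\m(F^t)=0$, and by the definition of $h_1(\Omega)$ when $\m(F^t)>0$, since then $F^t\subset\Omega$ has finite measure and finite perimeter), and integrate. This is shorter and avoids the $\varepsilon$-bookkeeping; the paper's version, on the other hand, makes the rigidity structure (all level sets of a minimizer must be Cheeger sets) more visible, which it later exploits in \cref{cor:eigenfunction_cheeger}.
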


\begin{proof}
Thanks to \cref{res:tot_var_of_sets}, as $\Omega$ is $N$-admissible, given any $N$-cluster $\mathcal E=\set*{\mathcal E(i)}_{i=1}^N$ in $\Omega$, the $N$-tuple $\{\chi_{\mathcal E(i)}:i=1,\dots,N\}$ is a viable competitor in the definition of $\Lambda_N(\Omega)$.
Hence, the inequality
\begin{equation*}
\Lambda_N(\Omega) \le h_N(\Omega)
\end{equation*}
immediately follows.

We now assume the validity of properties~\ref{prop:lsc} and~\ref{prop:complement_set}, so that we can use \cref{lem:coarea_mod}, and focus on the lower bound on $\Lambda_N(\Omega)$.

We begin with the case $N=1$.
By contradiction, start by assuming that $\Lambda_1(\Omega) < h_1(\Omega)$. 
Fixed any $\varepsilon>0$ such that $\Lambda_1(\Omega) +\varepsilon \le h_1(\Omega)$, we let $u\in BV_0(\Omega,\m)$ be a competitor in the definition of $\Lambda_1(\Omega)$ such that
\begin{equation}\label{eq:scoiattolo}
\frac{\Var(u)}{\|u\|_1} \le \Lambda_1(\Omega) +\varepsilon.
\end{equation}
Let $\set*{F^t : t\in\R}$ be the family of sets introduced in \cref{lem:coarea_mod} relatively to the function~$u$.
We claim that there exists $\bar t \in[0,+\infty)$ such that
\begin{equation}
\label{eq:armadillo}    
\text{$\m(F^t)>0$ either for all $t>\bar t$ or for all $t<-\bar t$.
}
\end{equation}
Indeed, if either $t_1<t_2<0$ or $t_1>t_2>0$, then $F^{t_1}\subset F^{t_2}$ by definition. Thus, it is enough to find $\bar t$ such that either $\m(F^{\bar t})>0$ or $\m(F^{-\bar t})>0$. If no such $\bar t$ exists, then Cavalieri's principle~\eqref{eq:cavalieri} implies that $\|u\|_1=0$, against our initial choice of $u$. Hence, claim~\eqref{eq:armadillo} follows, and so, up to possibly replacing $u$ with $-u$, we can suppose that $\m(F^t)>0$ for $t>\bar t$.
Now, by \cref{lem:coarea_mod}, we can rewrite~\eqref{eq:scoiattolo} as
\begin{equation}\label{eq:pantegana}
\int_\R \left(P(F^t) - (\Lambda_1(\Omega)+\varepsilon)\,\m(F^t)\right)\,\de t
\le 
0.
\end{equation}
Recalling that  $\Lambda_1(\Omega) +\varepsilon \le h_1(\Omega)$ according to our initial assumption, from inequality~\eqref{eq:pantegana}, we immediately get that
\[
0 
\le 
\int_\R \left(P(F^t) - h_1(\Omega)\, \m(F^t)\right)\,\de t
\le 
\int_\R \left(P(F^t) - (\Lambda_1(\Omega)+\varepsilon)\,\m(F^t)\right)\,\de t
\le 
0.
\]
Therefore, we must have that 
\begin{equation*}
P(F^t) - h_1(\Omega)\, \m(F^t)=0
\end{equation*}
for a.e.\ $t\in\R$.
Thus, taking into account that $F^t\subset \Omega$ for all $t\in\R$, we get that
\begin{equation}\label{eq:pandasauro}
\Lambda_1(\Omega)+\varepsilon 
\ge 
\frac{P(F^t)}{\m(F^t)} 
\ge 
h_1(\Omega)
\end{equation}
for a.e.\ $t>\bar t$, that is, $\Lambda_1(\Omega)+\varepsilon= h_1(\Omega)$ for all choices of $\varepsilon>0$ suitably small, which is clearly impossible.
Therefore, $\Lambda_1(\Omega)\ge h_1(\Omega)$, as desired.

We now conclude the proof with the case $N>1$.
Let $\varepsilon>0$ and let $\{u_i : i=1,\dots,N\}$ be a viable $N$-tuple for the definition of $\Lambda_N(\Omega)$ such that
\[
\sum_{i=1}^N\frac{\Var(u_i)}{\|u_i\|_1} 
\le \Lambda_N(\Omega) + \varepsilon.
\]
For each $i=1,\dots,N$, the function $u_i$ provides a viable competitor in the definition of $\Lambda_1(\Omega)$.
Consequently, using the inequality proved for the case $N=1$, we get that
\[
\Lambda_N(\Omega) + \varepsilon 
\ge 
N\Lambda_1(\Omega) 
= 
Nh_1(\Omega).
\]
The conclusion thus follows by letting $\varepsilon\to0^+$.
\end{proof}

We end the present section with the following simple consequence of \cref{res:relation_with_first_1_eigen}, and refer to~\cites{ACC05,BCN02} for the Euclidean case, and some final remarks.

\begin{corollary}\label{cor:eigenfunction_cheeger}
Let properties~\ref{prop:empty}, \ref{prop:lsc}, and~\ref{prop:complement_set} be in force.
Let $\Omega\in\mathscr{A}$ be a $1$-admissible set with $\m(\Omega)\in (0,+\infty)$. A function $u\in BV_0(\Omega,\m)$ attains $\Lambda_1(\Omega)$ if and only if the sets $F^t$ introduced in \cref{lem:coarea_mod} are $1$-Cheeger sets of $\Omega$ for a.e.\ $t\in\R$ such that $\m(F^t)>0$. Moreover, there exists a unique (up to a multiplicative factor) eigenfunction of the variational problem for $\Lambda_1(\Omega)$ if and only if there exists a unique $1$-Cheeger set.
\end{corollary}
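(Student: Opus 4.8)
The strategy is to combine the identity $\Lambda_1(\Omega)=h_1(\Omega)$ from \cref{res:relation_with_first_1_eigen} (note that \ref{prop:space} is at our disposal, since under \ref{prop:complement_set} it is equivalent to \ref{prop:empty}) with the symmetric coarea formula of \cref{lem:coarea_mod}. Recall that the sets $F^t$ satisfy $F^t\subset\Omega$ for every $t\in\R$; moreover, by \ref{prop:empty}, \ref{prop:lsc} and \cref{rem:negligible_modifications}, we have $P(F^t)=0$ whenever $\m(F^t)=0$, while $P(F^t)\ge h_1(\Omega)\m(F^t)$ whenever $\m(F^t)>0$ because $F^t$ is then a competitor in the definition of $h_1(\Omega)$. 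Hence $P(F^t)-h_1(\Omega)\m(F^t)\ge0$ for \emph{every} $t\in\R$. For $u\in BV_0(\Omega,\m)$ with $\|u\|_1>0$, \cref{lem:coarea_mod} then gives
\[
\|u\|_1\left(\frac{\Var(u)}{\|u\|_1}-h_1(\Omega)\right)=\int_\R\bigl(P(F^t)-h_1(\Omega)\,\m(F^t)\bigr)\,\de t\ge0 .
\]

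\emph{First equivalence.} By the displayed identity, $u$ attains $\Lambda_1(\Omega)=h_1(\Omega)$ if and only if the (non-negative) integrand vanishes for $\mathscr L^1$-a.e.\ $t$, i.e.\ $F^t\in\C_1(\Omega)$ for a.e.\ $t$ with $\m(F^t)>0$; conversely, if $F^t\in\C_1(\Omega)$ for every $t$ with $\m(F^t)>0$ then the integrand is identically $0$ and $u$ attains $\Lambda_1(\Omega)$. The only real point is therefore to upgrade ``a.e.\ $t$'' to ``every $t$'': given $t_0$ with $\m(F^{t_0})>0$, I would exploit the monotonicity of $t\mapsto F^t$ on each of $(0,+\infty)$ and $(-\infty,0)$ to choose a sequence $t_k\to t_0$ of parameters for which $\m(F^{t_k})>0$ and $P(F^{t_k})=h_1(\Omega)\m(F^{t_k})$, with $F^{t_k}$ monotonically exhausting $F^{t_0}$. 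Since $\m(\Omega)<+\infty$, continuity of $\m$ gives $\chi_{F^{t_k}}\to\chi_{F^{t_0}}$ in $L^1(X,\m)$ and $\m(F^{t_k})\to\m(F^{t_0})$, whence \ref{prop:lsc} yields $P(F^{t_0})\le\liminf_k P(F^{t_k})=h_1(\Omega)\,\m(F^{t_0})$; the opposite inequality holds because $F^{t_0}$ is a competitor, so $F^{t_0}\in\C_1(\Omega)$. This is the step I expect to require the most care (in particular the choice of the approximating parameters near the essential extrema of $u$ and near $t=0$).

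\emph{Second equivalence, sufficiency.} Suppose $\C_1(\Omega)=\{C\}$ up to $\m$-negligible sets. Taking $u=\chi_C$, the only sets $F^t$ with positive measure coincide with $C$, so $\chi_C$ attains $\Lambda_1(\Omega)$ by the first equivalence (note $\chi_C\in BV_0(\Omega,\m)$ with $\Var(\chi_C)=P(C)$ by \cref{res:tot_var_of_sets}); thus an eigenfunction exists. Now let $u$ be any eigenfunction. Replacing $u$ by $-u$ if needed — still an eigenfunction, since $\Var(-u)=\Var(u)$ by \ref{prop:complement_set} and \cref{rem:negligible_modifications} — I would first observe that $\{u>0\}$ and $\{u<0\}$ cannot both be $\m$-non-negligible: otherwise $F^0=\{u>0\}$ and $F^t=\{u<t\}$ (for small $t<0$) would both equal $C$ up to $\m$-negligible sets while being disjoint, which is impossible as $\m(C)>0$. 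So we may assume $u\ge0$ $\m$-a.e., hence $F^t$ is $\m$-negligible for $t<0$, and by the first equivalence $\{u>t\}=C$ up to $\m$-negligible sets for every $t$ with $\m(\{u>t\})>0$. Letting $t$ increase to $M:=\mathrm{ess\,sup}\,u$ (finite, since $u\in L^1(X,\m)$ and $\m(C)>0$) gives $u\ge M$ $\m$-a.e.\ on $C$, while $\m(\{u>M\})=0$ and $\{u>0\}=C$ give $u\le M$ $\m$-a.e.\ on $C$ and $u=0$ $\m$-a.e.\ on $X\setminus C$; therefore $u=M\chi_C$, i.e.\ $u$ is a scalar multiple of $\chi_C$.

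\emph{Second equivalence, necessity.} Assume there is a unique eigenfunction $u$ up to a multiplicative factor. Since $\|u\|_1>0$, some $F^t$ has positive measure and hence lies in $\C_1(\Omega)$ by the first equivalence, so $\C_1(\Omega)\ne\emptyset$. If there were two $1$-Cheeger sets $C_1,C_2$ with $\m(C_1\bigtriangleup C_2)>0$, then $\chi_{C_1}$ and $\chi_{C_2}$ would both be eigenfunctions (by the first equivalence, as in the previous paragraph) and would not be scalar multiples of one another, contradicting uniqueness. Hence the $1$-Cheeger set of $\Omega$ is unique, which completes the proof.
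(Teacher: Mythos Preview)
Your proof is correct and follows the same approach as the paper—combining the symmetric coarea formula of \cref{lem:coarea_mod} with the identity $\Lambda_1(\Omega)=h_1(\Omega)$ from \cref{res:relation_with_first_1_eigen}. In fact, your argument is more complete in two places the paper leaves implicit: the upgrade from ``a.e.\ $t$'' to ``every $t$'' via monotonicity and~\ref{prop:lsc}, and the explicit derivation that any eigenfunction must equal $c\chi_C$ when the Cheeger set is unique (the paper instead argues the contrapositive rather loosely, asserting one can find a single $\bar t$ separating two non-proportional eigenfunctions).
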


\begin{proof}
Recalling that~\ref{prop:empty} and~\ref{prop:complement_set} imply the validity of~\ref{prop:space}, we can argue as in the proof of \cref{res:relation_with_first_1_eigen}, with the exception that we can now choose a minimizer $u\in BV_0(\Omega,\m)$ for $\Lambda_1(\Omega)$, with no need to work with some given $\varepsilon>0$. In particular, in place of~\eqref{eq:pandasauro}, we obtain
\[
\Lambda_1(\Omega) 
= 
\frac{P(F^t)}{\m(F^t)} 
= 
h_1(\Omega)
\]
for all $t\in\R$ such that $\m(F^t)>0$, proving the first implication. On the other hand, owing again to \cref{lem:coarea_mod}, if all the level sets $\set*{F^t:t\in\R}$ of a viable competitor $u\in BV_0(\Omega, \m)$ with positive $\m$-measure are $1$-Cheeger sets, then
\[
\Lambda_1(\Omega)\|u\|_1 
= 
h_1(\Omega)\int_\R \m(F^t)\, \de t 
= 
\int_\R P(F^t)\, \de t 
= 
\Var(u),  
\]
so that $u$ must be a minimizer attaining $\Lambda_1(\Omega)$.

Now, exploiting the first part of the claim, it is easy to show the part regarding uniqueness. First, notice that, given any $1$-Cheeger set $C$, the function $u=c \chi_C$ is a $1$-eigenfunction for any $ c\neq 0$. Thus, two different Cheeger sets must provide two different $1$-eigenfunctions. 
Conversely, if $u_1$ and $u_2$ are two distinct $1$-eigenfunctions such that $u_1\neq cu_2$ for all $c\ne0$, then we can find $\bar t\in\R$ such that the two level sets  $\set*{u_1>\bar t}$ and $\set*{u_2 > \bar t}$ have positive $\m$-measure and are distinct, hence identifying two different $1$-Cheeger sets.
\end{proof}

\begin{remark}
Let properties \ref{prop:empty}, \ref{prop:lsc}, and~\ref{prop:complement_set} be in force. Whenever a $1$-Cheeger set $C$ exists (for instance under the assumptions of \cref{res:existence_N-Cheeger}),  \cref{cor:eigenfunction_cheeger} yields the existence of eigenfunctions of the variational problem defining $\Lambda_1(\Omega)$, by setting $u= c\chi_{C}$, with $c\ne0$.
\end{remark}

\begin{remark}
In \cref{sec:ineq_M_N_constants}, we already discussed some examples of sets $\Omega$ for which $Nh_1(\Omega) = h_N(\Omega)$ (recall \cref{rem:(N+2)-barbells}). Thus, whenever \cref{res:relation_with_first_1_eigen} applies, we obtain that  $\Lambda_N(\Omega)$ equals these values for such sets $\Omega$.
\end{remark}

\begin{remark}
We point out that, in~\cite{CL19}*{Thm.~3.1}, the authors define $\Lambda_N(\Omega)$ as the infimum of a different variational problem, and prove that it coincides with $h_N(\Omega)$. The adaptation of the approach of~\cite{CL19} to our more general framework will be discussed in the forthcoming paper~\cite{SSforth}.
\end{remark}

\begin{remark}
\label{rem:relation_with_first_1_eigen_no_complement}
A key tool for the proof of \cref{res:relation_with_first_1_eigen} is the symmetric coarea formula of \cref{lem:coarea_mod}, which holds provided that also \ref{prop:complement_set} is enforced. In particular, this excludes anisotropic Euclidean settings where the Wulff shape is not central symmetric. A workaround would be to  tweak the variational problem~\eqref{eq:def_lambda_N} by instead considering
\begin{align*}
\tilde{\Lambda}_1(\Omega)&= \inf\set*{ \frac{\Var(u)}{\|u\|_1} :
u\in BV_0(\Omega,\m),\ \|u\|_1>0,\ u\ge 0
}.
\end{align*}
 In such a way, in the proof of \cref{res:relation_with_first_1_eigen}, the symmetric coarea formula would not be needed (as $u\ge 0$), and one could then establish the equality $\tilde{\Lambda}_1(\Omega) = h_1(\Omega)$.
\end{remark}

\section{Relation with first \texorpdfstring{$p$}{p}-eigenvalue and \texorpdfstring{$p$}{p}-torsion}

\label{sec:p_eigenvalue}

In this section, we work in a topological non-negative $\sigma$-finite measure space endowed with a relative perimeter functional as in \cref{ssec:relative_perimeter}.
We discuss the relations between the $1$-Cheeger constant and two other variational quantities, the first $p$-eigenvalue and the $p$-torsion function, extending to the present more general setting the results obtained in~\cites{KF03,KN08} and~\cite{BE11}. 

\subsection{Relation with first \texorpdfstring{$p$}{p}-eigenvalue}

The following definition is motivated by the strong approximation proved in \cref{res:strong_approx_weak_p_slope}.

\begin{definition}[The set $\RW^{1,p}_0(\Omega,\m)$ for $p\in(1,+\infty)$]
\label{def:sobolev_zero}
Let properties~\ref{propR:empty}, \ref{propR:space}, \ref{propR:sub-mod}, \ref{propR:lsc}, and~\ref{propR:BV_cone} be in force.
Let $p\in(1,+\infty)$ and let $\Omega\subset X$ be a non-empty open set.
We say that $u\in\RW^{1,p}_0(\Omega,\m)$ if $u\in\RW^{1,p}(X,\m)$ and there exists a sequence $\{u_k\}_{k\in\N}\subset\RW^{1,1}(X,\m)\cap L^p(X,\m)$ as in \cref{res:strong_approx_weak_p_slope} such that, in addition,
\begin{equation*}
u_k\in \mathrm{C}^0(X)\ \text{and}\ \supp u_k\subset\overline\Omega\ \text{for all}\ k\in\N.
\end{equation*}
\end{definition}

Under the validity of properties \ref{propR:empty}, \ref{propR:space}, \ref{propR:sub-mod}, \ref{propR:lsc}, and~\ref{propR:BV_cone}, since $0\in\RW^{1,1}(X,\m)$ by \cref{res:basic_props_1_slope}, we know that $0\in\RW^{1,p}_0(\Omega,\m)$.

The following definition is focused on open sets for which $\RW^{1,p}_0(\Omega,\m)\ne\set*{0}$.

\begin{definition}[$p$-regular open set and first $p$-eigenvalue]
\label{def:p_regular_open_and_first_p_eigen}
Let properties \ref{propR:empty}, \ref{propR:space}, \ref{propR:sub-mod}, \ref{propR:lsc}, and~\ref{propR:BV_cone} be in force.
Let $p\in(1,+\infty)$.
A non-empty open set $\Omega\subset X$ is \emph{$p$-regular} if $\RW^{1,p}_0(\Omega,\m)\ne\set*{0}$. 
In this case, we let 
\begin{equation*}
\lambda_{1,p}(\Omega)
=
\inf\set*{\frac{\||\nabla u|_p\|_p^p}{\|u\|_p^p} : u\in\RW^{1,p}_0(\Omega,\m),\ \|u\|_p>0}\in[0,+\infty)
\end{equation*}
be the \emph{first $p$-eigenvalue} relative to the $\RW^{1,p}$-energy on~$\Omega$. Here, $|\nabla u|_p$ denotes the weak $p$-slope of $u$ defined in Definition \ref{def:weak_p_slope}.
\end{definition}

According to \cref{def:p_regular_open_and_first_p_eigen}, if $\Omega$ is $p$-regular, then we can find a non-zero function $u\in\RW^{1,p}_0(\Omega,\m)$.
As a consequence, by \cref{def:sobolev_zero}, we can also find a function $v\in\RW^{1,1}(X,\m)\cap \mathrm{C}^0(X)$ with $\supp v\subset\overline\Omega$.
Consequently, $v\in BV_0(\Omega,\m)$ with $\|v\|_1>0$.
Therefore, the first $1$-eigenvalue $\lambda_{1,1}(\Omega)$ relative to the variation $\RVar$ on~$\Omega$ introduced in \cref{def:first_1_eigen} is well-posed.
In addition, thanks to \cref{res:relation_with_first_1_eigen}, $\lambda_{1,1}(\Omega)$ coincides with $h_1(\Omega)$ whenever $\Omega$ is $1$-admissible (with respect to the variation functional $\Var(\,\cdot\,)=\RVar(\,\cdot\,;X)$), it satisfies $\m(\Omega)\in(0,+\infty)$, and the perimeter in~\eqref{eq:def_tot_RP} satisfies~\ref{prop:complement_set}. 

As a corollary of the following result, we prove that the $1$-Cheeger constant $h_1(\Omega)$ provides a lower bound on the first $p$-eigenvalue.

\begin{theorem}[Relation with first $p$-eigenvalue]\label{thm:CheegerIn}
Let the properties \ref{propR:empty}, \ref{propR:space}, \ref{propR:sub-mod}, \ref{propR:lsc}, \ref{propR:BV_cone}, and~\ref{propR:local} be in force and let $p\in(1,+\infty)$.
If $\Omega\subset X$ is a $p$-regular open set, then 
\begin{equation}\label{eq:CheegerIn}
\lambda_{1,p}(\Omega)\ge\left(\frac{\lambda_{1,1}(\Omega)}{p}\right)^p.
\end{equation}
\end{theorem}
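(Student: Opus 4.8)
The plan is to mimic the classical Cheeger-type argument of~\cite{KF03}: we test the variational problem defining $\lambda_{1,1}(\Omega)$ with a suitable power of a competitor for the $p$-eigenvalue problem, the only genuinely new ingredient being the use of the chain rule of~\cref{res:chain_rule} in place of the Euclidean one. Recall first that, as discussed right before the statement, $p$-regularity of $\Omega$ guarantees that $\lambda_{1,1}(\Omega)$ is well posed, and that properties~\ref{propR:empty}, \ref{propR:space} and~\ref{propR:local} (needed to invoke~\cref{res:chain_rule}) are among those in force. If $\lambda_{1,1}(\Omega)=0$ there is nothing to prove, so we may assume $\lambda_{1,1}(\Omega)>0$.

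First I would fix an arbitrary $u\in\RW^{1,p}_0(\Omega,\m)$ with $\|u\|_p>0$ and, by~\cref{def:sobolev_zero} combined with~\cref{res:strong_approx_weak_p_slope}, pick a sequence $\{u_k\}_{k\in\N}\subset\RW^{1,1}(X,\m)\cap L^p(X,\m)\cap\mathrm{C}^0(X)$ with $\supp u_k\subset\overline\Omega$ for all $k$, such that $u_k\to u$ and $|\nabla u_k|\to|\nabla u|_p$ in $L^p(X,\m)$ as $k\to+\infty$. The core step is then to set $\phi(t):=|t|^{p-1}t$ and $w_k:=\phi(u_k)$. One checks that $\phi\in\mathrm{C}^1(\R)$, since $\phi'(t)=p\,|t|^{p-1}$ is continuous for $p>1$, and that $\phi$ is strictly increasing; hence~\cref{res:chain_rule} applies and gives that $w_k\in\mathrm{C}^0(X)$ has finite variation measure with $|\RD w_k|=\phi'(u_k)\,|\RD u_k|=p\,|u_k|^{p-1}|\nabla u_k|\,\m$. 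In particular $|\RD w_k|\ll\m$, and since $\|w_k\|_1=\|\,|u_k|^p\,\|_1=\|u_k\|_p^p<+\infty$ we get $w_k\in\RW^{1,1}(X,\m)$; moreover $\phi(0)=0$ forces $\supp w_k=\supp u_k\subset\overline\Omega$, so that $w_k\in BV_0(\Omega,\m)$ by the observation preceding the statement.

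By Hölder's inequality with exponents $p$ and $p/(p-1)$,
\[
\Var(w_k)=|\RD w_k|(X)=p\int_X|u_k|^{p-1}|\nabla u_k|\,\de\m\le p\,\|u_k\|_p^{p-1}\,\big\||\nabla u_k|\big\|_p .
\]
Thus, for every $k$ large enough that $\|u_k\|_p>0$ (which holds eventually, since $\|u_k\|_p\to\|u\|_p>0$), the function $w_k$ is an admissible competitor in~\cref{def:first_1_eigen} and
\[
\lambda_{1,1}(\Omega)\le\frac{\Var(w_k)}{\|w_k\|_1}\le\frac{p\,\|u_k\|_p^{p-1}\,\big\||\nabla u_k|\big\|_p}{\|u_k\|_p^{p}}=\frac{p\,\big\||\nabla u_k|\big\|_p}{\|u_k\|_p},
\]
that is, $\big(\lambda_{1,1}(\Omega)/p\big)^p\le\big\||\nabla u_k|\big\|_p^p/\|u_k\|_p^p$. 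Letting $k\to+\infty$ yields $\big(\lambda_{1,1}(\Omega)/p\big)^p\le\||\nabla u|_p\|_p^p/\|u\|_p^p$, and taking the infimum over all $u\in\RW^{1,p}_0(\Omega,\m)$ with $\|u\|_p>0$ gives~\eqref{eq:CheegerIn}.

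I expect the main obstacle to be precisely the chain rule step: one must choose $\phi$ to be \emph{genuinely} strictly increasing so that~\cref{res:chain_rule} is applicable — the naive choice $t\mapsto|t|^p$ fails, being non-monotone on $\R$ — while still arranging that $|\phi'(u_k)|$ contributes the factor $p\,|u_k|^{p-1}$ and that $\|\phi(u_k)\|_1=\|u_k\|_p^p$; the choice $\phi(t)=|t|^{p-1}t$ does all of this at once. A minor but necessary point to verify carefully is that $w_k$ indeed belongs to $BV_0(\Omega,\m)$, which reduces to the support argument above together with the remark that continuous functions supported in $\overline\Omega$ vanish $\m$-a.e.\ on $X\setminus\Omega$, already used before the statement.
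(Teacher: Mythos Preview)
Your proof is correct and follows essentially the same route as the paper's: both pick the approximating sequence $u_k$ from \cref{def:sobolev_zero}, apply the chain rule of \cref{res:chain_rule} with the strictly increasing $\phi(t)=t|t|^{p-1}$, use H\"older to bound $\Var(\phi(u_k))$, and pass to the limit. Your additional remarks on why $\phi$ must be strictly increasing and on the $BV_0$ membership of $w_k$ are accurate and match the paper's reasoning.
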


\begin{proof}
Let $u\in\RW^{1,p}_0(\Omega,\m)$ be such that $\|u\|_p>0$.
Then, according to \cref{def:sobolev_zero}, we can find $u_k\in\RW^{1,1}(X,\m)\cap L^p(X,\m)\cap \mathrm{C}^0(X)$ with $|\nabla u_k|\in L^p(X,\m)$ and $\supp u_k\subset\overline\Omega$ for all $k\in\N$ such that $u_k\to u$ and $|\nabla u_k|\to|\nabla u|_p$ both in $L^p(X,\m)$ as $k\to+\infty$.
Now let $\phi(r)=r|r|^{p-1}$ for all $r\in\R$ and note that $\phi\in \mathrm{C}^1(\R)$ is strictly increasing, with $\phi'(r)=p|r|^{p-1}$ for all $r\in\R$.
By \cref{res:chain_rule}, we get that $\phi(u_k)\in\RW^{1,1}(X,\m)\cap \mathrm{C}^0(X)$ with $\supp\phi(u_k)\subset\overline\Omega$ and $|\nabla \phi(u_k)|=p|u_k|^{p-1}|\nabla u_k|$ $\m$-a.e.\ in~$X$ for all $k\in\N$.
In particular, $\phi(u_k)\in BV_0(\Omega,\m)$ for all $k\in\N$. 
Thus, by H\"older's inequality, we can estimate
\begin{equation*}
\begin{split}
\RVar(\phi(u_k);X)
&=
\||\nabla\phi(u_k)|\|_1
=
p\int_X|u_k|^{p-1}|\nabla u_k|\,\de \m
\\
&
\le 
p\||u_k|^{p-1}\|_{\frac{p}{p-1}}\||\nabla u_k|\|_p
=
p\|u_k\|_p^{p-1}\||\nabla u_k|\|_p\,.
\end{split}
\end{equation*}
Thus,
\begin{equation*}
\lambda_{1,1}(\Omega)
\le
\frac{\RVar(\phi(u_k);X)}{\|\phi(u_k)\|_1}
\le
\frac{p\|u_k\|_p^{p-1}\||\nabla u_k|\|_p}{\|u_k\|_p^p}
=
\frac{p\||\nabla u_k|\|_p}{\|u_k\|_p}
\end{equation*}
for all $k\in\N$. 
Letting $k\to+\infty$, we obtain
\begin{equation*}
\lambda_{1,1}(\Omega)
\le
p\,\frac{\||\nabla u|_p\|_p}{\|u\|_p}
\end{equation*}
for $u\in\RW^{1,p}_0(\Omega,\m)$ with $\|u\|_p>0$ and the proof is complete.
\end{proof}

Assuming~\ref{prop:complement_set}, we can combine \cref{thm:CheegerIn} with \cref{res:relation_with_first_1_eigen} obtaining the following corollary.

\begin{corollary}
\label{res:relation_p_eigen_cheeger}
Let the assumptions of \cref{thm:CheegerIn} be in force. If the perimeter in~\eqref{eq:def_tot_RP} also satisfies~\ref{prop:complement_set} and $\Omega\subset X$ is $1$-admissible with respect to the variation in~\eqref{eq:def_tot_RP} with $\m(\Omega)\in(0,+\infty)$, then 
\begin{equation*}
\lambda_{1,p}(\Omega)
\ge 
\left(\frac{h_1(\Omega)}{p}\right)^p.
\end{equation*}
\end{corollary}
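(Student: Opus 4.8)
The plan is simply to chain together the two principal results already established. First I would invoke \cref{thm:CheegerIn}: all of its hypotheses---properties~\ref{propR:empty}, \ref{propR:space}, \ref{propR:sub-mod}, \ref{propR:lsc}, \ref{propR:BV_cone}, \ref{propR:local}, the condition $p\in(1,+\infty)$, and the $p$-regularity of $\Omega$---are assumed here, so we immediately obtain inequality~\eqref{eq:CheegerIn}, namely
\[
\lambda_{1,p}(\Omega)\ge\left(\frac{\lambda_{1,1}(\Omega)}{p}\right)^p.
\]
The only remaining task is to identify $\lambda_{1,1}(\Omega)$ with the $1$-Cheeger constant $h_1(\Omega)$, which is exactly what \cref{res:relation_with_first_1_eigen} provides once its hypotheses are checked.

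The second step is therefore the bookkeeping verification that the ``absolute'' perimeter $P(\,\cdot\,)=\RP(\,\cdot\,;X)$ of~\eqref{eq:def_tot_RP} inherits the properties needed by \cref{res:relation_with_first_1_eigen}. Specializing the relative-perimeter assumptions to the open set $A=X$: property~\ref{propR:empty} gives $P(\emptyset)=\RP(\emptyset;X)=0$, that is~\ref{prop:empty}; property~\ref{propR:space} gives $P(X)=\RP(X;X)=0$, that is~\ref{prop:space}; property~\ref{propR:lsc} gives the $L^1(X,\m)$-lower semicontinuity of $P$, that is~\ref{prop:lsc}. Property~\ref{prop:complement_set} is assumed directly in the statement. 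Moreover, by hypothesis $\Omega$ is $1$-admissible with respect to the variation $\Var(\,\cdot\,)=\RVar(\,\cdot\,;X)$ and $\m(\Omega)\in(0,+\infty)$; note that the $p$-regularity of $\Omega$ also guarantees, as observed after \cref{def:p_regular_open_and_first_p_eigen}, that $BV_0(\Omega,\m)$ contains a function of positive $L^1$-norm, so that $\lambda_{1,1}(\Omega)$ is well posed. Hence \cref{res:relation_with_first_1_eigen} applies and yields $h_1(\Omega)=\lambda_{1,1}(\Omega)$.

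Finally, substituting this equality into the inequality from \cref{thm:CheegerIn} gives
\[
\lambda_{1,p}(\Omega)\ge\left(\frac{h_1(\Omega)}{p}\right)^p,
\]
which is the assertion. I do not expect any genuine obstacle: the proof is a concatenation of \cref{thm:CheegerIn} and \cref{res:relation_with_first_1_eigen}, and the only point demanding a line of care is the reduction, via $A=X$, of the relative-perimeter properties to the ones phrased for a plain perimeter functional in \cref{ssec:perimeter_functional}, so that both cited theorems are legitimately invoked in the present setting.
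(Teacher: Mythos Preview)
Your proposal is correct and follows exactly the route the paper takes: the corollary is stated immediately after \cref{thm:CheegerIn} with the remark that one simply combines it with \cref{res:relation_with_first_1_eigen}, and no further proof is given. Your additional bookkeeping, checking that \ref{propR:empty}, \ref{propR:space}, \ref{propR:lsc} specialize at $A=X$ to \ref{prop:empty}, \ref{prop:space}, \ref{prop:lsc}, merely makes explicit what the paper leaves implicit.
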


\subsection{Relation with \texorpdfstring{$p$}{p}-torsional creep function}

Assume properties \ref{propR:empty}, \ref{propR:space}, \ref{propR:sub-mod}, \ref{propR:lsc}, and~\ref{propR:BV_cone} to be in force.
Let $p\in(1,+\infty)$ and let  $\Omega\subset X$ be a non-empty $p$-regular open set with $\m(\Omega)<+\infty$.
We let $J_p\colon \RW^{1,p}_0(\Omega,\m)\to\R$, be
\begin{equation*}
J_p(u)
=
\frac1p\int_{\Omega}|\nabla u|_p^p\,\de\m
-
\int_{\Omega} u\,\de\m,
\end{equation*}
be the \emph{$p$-torsional creep energy functional}, see~\cite{Kaw90}. When enough structure is available on the ambient space, the Euler--Lagrange equation associated to the functional $J_p$ is given by 
\begin{equation}\label{eq:torsional_creep_PDE}
\begin{cases}
\begin{aligned}
-\Delta_p u &= 1, \qquad &&\text{in $\Omega$,}
\\
u&=0, &&\text{on $\partial \Omega$.}
\end{aligned}
\end{cases}
\end{equation}

Thanks to \cref{res:basic_props_p_relaxed_1_slope}\ref{item:basic_props_p_relaxed_1_slope_convex}, the functional $J_p$ is strictly convex on the convex set $\RW^{1,p}_0(\Omega,\m)$.
Hence, the \emph{torsional creep problem} 
\begin{equation*}
T_p(\Omega) = \inf\set*{J_p(u) : u\in \RW^{1,p}_0(\Omega,\m)}
\end{equation*}
has at most one minimizer. If this exists, we denote it by $w_p\in\RW^{1,p}_0(\Omega,\m)$. In particular, since $0\in\RW^{1,p}_0(\Omega,\m)$, we immediately see that 
\begin{equation*}
J_p(w_p)\le J_p(0)=0,
\end{equation*}
so that
\begin{equation}\label{eq:apatosauro}
\int_\Omega|\nabla w_p|_p^p\,\de\m\le p\int_\Omega w_p\,\de\m \le p\int_\Omega |w_p|\,\de\m.    
\end{equation}

Under the assumptions of \cref{res:relation_p_eigen_cheeger}, and assuming the existence of a non-trivial minimizer $w_p$ of $J_p$, we can show that the $1$-Cheeger constant of $\Omega$ provides a bound on the $L^1(X,\m)$ norm of $w_p$, in a similar fashion to~\cite{BE11}.

\begin{theorem}[Relation with the $p$-torsional creep function]\label{thm:torsional}
Let the properties \ref{propR:empty}, \ref{propR:space}, \ref{propR:sub-mod}, \ref{propR:lsc}, \ref{propR:BV_cone},  \ref{propR:local}, and~\ref{prop:complement_set} be in force and let $p\in(1,+\infty)$.
If $\Omega\subset X$ is a $p$-regular open set which is also $1$-admissible with respect to the variation in~\eqref{eq:def_tot_RP} with $\m(\Omega)\in(0,+\infty)$, and $J_p$ has a non-trivial minimizer $w_p \neq 0$, then
\begin{equation}\label{eq:Cheeger_torsional_creep}
h_1(\Omega) \le p^{1+\frac 1p} \left(\frac{\m(\Omega)}{\|w_p\|_1} \right)^{\frac{p-1}{p}}.
\end{equation}
\end{theorem}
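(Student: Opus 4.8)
The plan is to combine four ingredients that are already at our disposal: the fact that $w_p$ is an admissible competitor for $\lambda_{1,p}(\Omega)$, the Cheeger-type lower bound of \cref{thm:CheegerIn}, the energy estimate \eqref{eq:apatosauro} satisfied by the minimizer $w_p$, a single application of H\"older's inequality, and the identity $\lambda_{1,1}(\Omega)=h_1(\Omega)$ furnished by \cref{res:relation_with_first_1_eigen}.

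First I would observe that, since $w_p\in\RW^{1,p}_0(\Omega,\m)$ and $w_p\neq 0$, it is in particular admissible in the variational definition of $\lambda_{1,p}(\Omega)$, so that $\lambda_{1,p}(\Omega)\le\||\nabla w_p|_p\|_p^p\,\|w_p\|_p^{-p}$. Here one uses that the weak $p$-slope of a function of $\RW^{1,p}_0(\Omega,\m)$ vanishes $\m$-a.e.\ outside $\Omega$: by \cref{def:sobolev_zero} there is an approximating sequence $\{u_k\}$ with each $u_k\in \mathrm{C}^0(X)$ vanishing on the open set $X\setminus\overline\Omega$, hence, by property~\ref{propR:local} and the coarea formula, $|\nabla u_k|=0$ $\m$-a.e.\ there, and one passes to the limit, so that $\||\nabla w_p|_p\|_p^p=\int_\Omega|\nabla w_p|_p^p\,\de\m$. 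Chaining this with \cref{thm:CheegerIn} and then with the bound $\int_\Omega|\nabla w_p|_p^p\,\de\m\le p\,\|w_p\|_1$ contained in \eqref{eq:apatosauro} yields
\[
\left(\frac{\lambda_{1,1}(\Omega)}{p}\right)^p\le\lambda_{1,p}(\Omega)\le\frac{\int_\Omega|\nabla w_p|_p^p\,\de\m}{\|w_p\|_p^p}\le\frac{p\,\|w_p\|_1}{\|w_p\|_p^p},
\]
that is, $\lambda_{1,1}(\Omega)^p\,\|w_p\|_p^p\le p^{p+1}\,\|w_p\|_1$.

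Next, since $w_p$ vanishes $\m$-a.e.\ outside $\Omega$ and $\m(\Omega)<+\infty$, H\"older's inequality gives $\|w_p\|_1\le\m(\Omega)^{(p-1)/p}\,\|w_p\|_p$, hence $\|w_p\|_p^p\ge\m(\Omega)^{-(p-1)}\,\|w_p\|_1^{p}$; note that $\|w_p\|_1>0$ because $w_p\neq0$. Substituting this into the inequality just obtained and rearranging (dividing by the positive quantity $\|w_p\|_1$) gives $\lambda_{1,1}(\Omega)^p\,\|w_p\|_1^{p-1}\le p^{p+1}\,\m(\Omega)^{p-1}$, i.e.
\[
\lambda_{1,1}(\Omega)\le p^{1+\frac1p}\left(\frac{\m(\Omega)}{\|w_p\|_1}\right)^{\frac{p-1}{p}}.
\]
Finally, because $\Omega$ is $1$-admissible with respect to the variation in \eqref{eq:def_tot_RP}, $\m(\Omega)\in(0,+\infty)$, and property~\ref{prop:complement_set} is in force, \cref{res:relation_with_first_1_eigen} gives $\lambda_{1,1}(\Omega)=h_1(\Omega)$, and \eqref{eq:Cheeger_torsional_creep} follows.

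The only genuinely delicate step is the first displayed inequality, namely making precise that $w_p$ may be used as a competitor for $\lambda_{1,p}(\Omega)$ with its Dirichlet energy localized on $\Omega$, so that \eqref{eq:apatosauro} applies verbatim; everything else is a short chain of elementary estimates. An alternative that sidesteps invoking \cref{thm:CheegerIn} as a black box is to reproduce its proof with the specific test function $u=w_p$: applying the chain rule of \cref{res:chain_rule} to $\phi(r)=r|r|^{p-1}$ along the approximating sequence of \cref{def:sobolev_zero} produces competitors in $BV_0(\Omega,\m)$ and delivers directly $\lambda_{1,1}(\Omega)\,\|w_p\|_p\le p\,\||\nabla w_p|_p\|_p$, after which one proceeds exactly as above.
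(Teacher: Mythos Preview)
Your proof is correct and follows essentially the same route as the paper: use $w_p$ as a competitor for $\lambda_{1,p}(\Omega)$, apply the Cheeger inequality of \cref{thm:CheegerIn} (the paper packages this together with \cref{res:relation_with_first_1_eigen} as \cref{res:relation_p_eigen_cheeger}, while you apply the two results separately), then invoke \eqref{eq:apatosauro} and H\"older's inequality, and rearrange. Your extra care in justifying that the Dirichlet energy of $w_p$ localizes to $\Omega$ is more explicit than the paper's treatment, but the underlying argument is the same.
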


\begin{proof}
Using \cref{res:relation_p_eigen_cheeger}, the (non-trivial) torsional creep function $w_p$ as a competitor for $\lambda_{1,p}(\Omega)$, the inequality~\eqref{eq:apatosauro} and H\"older's inequality, we get
\begin{align*}
    \left(\frac{h_1(\Omega)}{p} \right)^p 
    &
    \le 
    \lambda_{1,p}(\Omega) 
    \le 
    \frac{\int_\Omega |\nabla w_p|_p^p\, \de \m}{\int_\Omega |w_p|^p\, \de \m}
    \\
    &
    \le
    \frac{p\int_\Omega |w_p|\, \de \m}{\int_\Omega |w_p|^p\, \de \m}
    \le
    \frac{p \m(\Omega)^{p-1}}{\left(\int_\Omega |w_p|\, \de \m\right)^{p-1}}.
\end{align*}
Rearranging, the claimed inequality follows.
\end{proof}

\begin{remark}
If the weak formulation of the torsional creep PDE~\eqref{eq:torsional_creep_PDE} is available, then one can test it against the solution $w_p$ itself, finding that
\[
\int_\Omega |\nabla w_p|_p^p\, \de \m = \int_\Omega w_p\, \de \m.
\]
Using this equality in place of inequality~\eqref{eq:apatosauro}, one gets the analog of~\eqref{eq:Cheeger_torsional_creep} with a prefactor of $p$ in place of $p^{1+\frac1p}$. Under additional structural hypotheses on the space, which allow to identify the $p$-slope with the $p$-th power of the $1$-slope, one can altogether remove the prefactor from the inequality, similarly to~\cite{BE11}*{Thm.~2}.
\end{remark}

\begin{remark}
In the statement of \cref{thm:torsional}, we need to assume that $J_p$ has a minimizer (which in case is the unique one, by strict convexity) and that this is non-trivial. This can be ensured in suitable spaces, where a Poincar\'e inequality holds, allowing to see that $J_p$ is coercive.
\end{remark}

\section{Applications}
\label{sec:examples}

In this section, we apply the general results presented above to specific settings. 
In each of the following examples, we will consider the natural topology of the ambient space. 

\subsection{Metric-measure spaces}
\label{subsec:mms}

Let $(X,d)$ be a complete and separable metric space and let $\m$ be a non-negative Borel measure (on the $\sigma$-algebra induced by the distance $d$) that is finite on bounded Borel sets and such that $\supp\m = X$.
In particular, $\m$ is a $\sigma$-finite measure on~$X$. 

Given $u\colon X\to\R$, we define the \emph{slope} of $u$ (also called the \emph{local Lipschitz constant} of $u$) the function $|\nabla u|\colon X\to[0,+\infty]$ defined as
\begin{equation*}
|\nabla u|(x)
=
\limsup_{y\to x}\frac{|u(y)-u(x)|}{d(y,x)}
\end{equation*}
for all $x\in X$. 

For an open set $A\subset X$, we say that $u\in\Lip_{\loc}(A)$ if for each $x\in\Omega$, there is $r>0$ such that $B_r(x)\subset A$ and the restriction $u|_{B_r(x)}$ is a Lipschitz function, where $B_r(x)\subset X$ denotes the $d$-ball centered at $x\in X$ with radius $r\in(0,+\infty)$.

In the present metric-measure setting, one has the following natural definition of $BV$ functions, see~\cites{ADiM14,ADiMG17, Miranda03}.

\begin{definition}[$BV$ functions in metric spaces]
\label{def:BV_in_metric_spaces}

We say that $u\in BV(X,d,\m)$ if $u\in L^1(X,\m)$ and there exists a sequence $\{u_k\}_{k\in\N}\subset\Lip_{\loc}(X)$ such that 
\begin{equation*}
u_k\to u\ \text{in}\ L^1(X,\m)
\quad
\text{and}
\quad
\sup_{k\in\N}\int_{X}|\nabla u_k|\,\de\m
<+\infty.
\end{equation*}
Moreover, we let 
\begin{equation}
\label{eq:def_variation_metric}
|Du|(A)
=
\inf
\set*{
\liminf_{k\to+\infty}\int_X|\nabla u_k|\,\de\m
: 
u_k\in\Lip_{\loc}(A),
\
u_k\to u\ \text{in}\ L^1(A,\m)
}
\end{equation}
be the \emph{variation of $u$ relative to the open set $A\subset X$}.
As usual, if $u=\chi_E$ for some measurable set $E\subset X$, then we let $P(E;A)=|D\chi_E|(A)$ be the \emph{perimeter of $E$ relative to $A$}.
\end{definition}

If $u\in BV(X,d,\m)$, then the set function $A\mapsto|Du|(A)$ is the restriction to open sets of a finite Borel measure, for which we keep the same notation.
This result was originally proved in~\cite{Miranda03} for locally compact metric spaces, and then generalized to the possibly non-locally compact setting in~\cite{ADiM14}.
Actually, as done in~\cite{Miranda03}, the convergence in $L^1(A,\m)$  in~\eqref{eq:def_variation_metric} may be replaced with the convergence in $L^1_{\loc}(A,\m)$ without affecting the overall approach. 

In the present setting, the (total) perimeter functional $P(E)=|D\chi_E|(X)$ given by~\eqref{eq:def_variation_metric} in \cref{def:BV_in_metric_spaces}  satisfies the properties \ref{prop:empty}, \ref{prop:space}, \ref{prop:sub-mod}, \ref{prop:lsc}, \ref{prop:compact}, and~\ref{prop:complement_set}.
Indeed, properties~\ref{prop:empty}, \ref{prop:space}, and~\ref{prop:complement_set} immediately follow from \cref{def:BV_in_metric_spaces}.
For property~\ref{prop:sub-mod}, we refer to~\cite{Miranda03}*{Prop.~4.7(3)}. 
Finally, property~\ref{prop:lsc} is a consequence of~\cite{Miranda03}*{Prop.~3.6} and property~\ref{prop:compact} follows from~\cite{Miranda03}*{Thm.~3.7}.

For what concerns the variation measure introduced in~\eqref{eq:def_variation_metric}, from~\cite{Miranda03}*{Prop.~4.2} and the discussion in~\cite{ADiM14}*{Sect.~1}, we can infer that
\begin{equation*}
|Du|(A)
=
\int_{\R} |D\chi_{\set*{u>t}}|(A)\,\de t
=
\int_{-\infty}^0|D\chi_{\set*{u<t}}|(A)\,\de t
+
\int_0^{+\infty}|D\chi_{\set*{u>t}}|(A)\,\de t
\end{equation*}
whenever $u\in BV(X,d,\m)$, for every Borel set $A\subset X$.

In virtue of the properties listed above, we easily deduce the validity of the relation between the $1$-Cheeger constant of an open set $\Omega\subset X$ with $\m(\Omega)\in(0,+\infty)$ and the first $1$-eigenvalue as in  \cref{res:relation_with_first_1_eigen}, meaning that 
\begin{equation*}
h_1(\Omega)
=
\inf\set*{
\frac{|Du|(X)}{\|u\|_1} 
:
u\in BV(X,d,\m),\ u|_{X\setminus\Omega}=0,\ \|u\|_1>0
}.
\end{equation*}

Incidentally, we refer the reader to~\cite{ADiM14}*{Sect.~6} for the definition of the \emph{$1$-Laplacian operator} in this general context. 

Concerning the isoperimetric-type property~\ref{prop:isoperim}, we can state the following result (inspired by~\cite{AFP00book}*{Thm.~3.46}).
Notice that inequality~\eqref{eq:mira_isop} serves as a
prototype in the present setting.
In fact, as discussed in the examples below, if the ambient metric-measure space $(X,d,\m)$ has a richer structure, then finer results are available.
Nonetheless, the isoperimetric-type inequality~\eqref{eq:mira_isop} is sufficient to replace~\ref{prop:isoperim} in the present context. 

\begin{proposition}
\label{res:mira_isop}
Let $(X,d,\m)$ be a geodesic Poincaré metric-measure space as in~\cite{Miranda03}*{Def.~2.5}.
Let $\Omega\subset X$ be an open set with $\m(\Omega)\in(0,+\infty)$.
Assume that there exists a countable family of open balls $B^i=B_{r_i}(x_i)$, $i\in\N$, with the following properties: 
\begin{enumerate}[label=(\roman*)]
\item
\label{item:covering} 
$\Omega\subset\bigcup_{i\in\N}B^i$;
\item
\label{item:overlap} 
there exists $N\in\N$ such that
$\sum_{i\in\N}\chi_{B^i}\le N$;
\item
\label{item:ahlfors}
there exists $c_0>0$ such that 
$\m(B^i)\ge c_0r_i^Q$ for all $i\in\N$, where $Q>0$ is the \emph{homogeneous dimension} as in~\cite{Miranda03}*{Rem.~2.2};
\item
\label{item:radii_inf}
$\eps_0=\inf\set*{r_i:i\in\N}>0$.
\end{enumerate}
Then, there exists a constant $C>0$ such that
\begin{equation}
\label{eq:mira_isop}
\m(E)^{\frac{Q-1}Q}\le C\,P(E)
\end{equation}
whenever $E\subset\Omega$ is an $\m$-measurable set with $\m(E)<\frac{c_0}{2}\eps_0^Q$.
\end{proposition}

\begin{proof}
Due to~\ref{item:radii_inf}, we can estimate
\begin{equation*}
\m(E\cap B^i)
\le 
\m(E)
<\frac{c_0}{2}\eps_0^Q
\le 
\frac12\m(B^i)
\end{equation*}
for all $i\in\N$.
Consequently, by~\cite{Miranda03}*{Thm.~4.5} (since $(X,d)$ is geodesic by assumption) and~\ref{item:ahlfors}, we get 
\begin{equation*}
\begin{split}
\m(E\cap B^i)^{\frac{Q-1}{Q}}
&=
\min\set*{\m(E\cap B^i),\ \m(E^c\cap B^i)}^{\frac{Q-1}{Q}}
\\
&\le
\frac{cr_i}{\m(B^i)^{1/Q}}\,P(E;B^i)
\le 
c\,c_0^{-1/Q}P(E;B^i)
\end{split}
\end{equation*}
for all $i\in\N$, where $c>0$ is the constant appearing in~\cite{Miranda03}*{Thm.~4.5}.
Hence, thanks to~\ref{item:covering} and~\ref{item:overlap}, we conclude that
\begin{equation*}
\m(E)^{\frac{Q-1}{Q}}
\le 
\sum_{i\in\N}\m(E\cap B^i)^{\frac{Q-1}{Q}}
\le 
c\,c_0^{-1/Q}\sum_{i\in\N}P(E;B^i)
\le 
c\,c_0^{-1/Q}N\,P(E),
\end{equation*}
yielding~\eqref{eq:mira_isop} with $C=c\,c_0^{-1/Q}N$.
\end{proof}

\begin{remark}
Note that, if $(X,d,\m)$ is as in the statement of \cref{res:mira_isop} and $\Omega\subset X$ is a bounded open set such that $\m(\Omega)>0$, then $\overline{\Omega}$ is a compact set, and thus, for any $r>0$, we can find $N(r)\in\N$ open balls $B_r(x_i)$, $x_i\in\overline{\Omega}$, $i=1,\dots,N(r)$, satisfying~\ref{item:covering}, \ref{item:overlap} with $N=N(r)$ (in fact, the assumption that $(X,d)$ is geodesic is not needed), and~\ref{item:radii_inf} with $\eps_0=r$. 
The validity of~\ref{item:ahlfors} holds thanks to~\cite{Miranda03}*{Eq.~2 in Rem.~2.2}, since $\overline{\Omega}\subset B_R(\bar x)$ for some $\bar x\in\Omega$ and some $R\in(0,+\infty)$.
Consequently, \cref{res:mira_isop} always applies to bounded open sets $\Omega\subset X$ with positive measure.
\end{remark}

In the metric-measure framework, the definition of  $W^{1,1}(X,d,\m)$ is not completely understood, see the discussion in~\cite{ADiM14}*{Sect.~8}.
As usual, one possibility is to say that $u\in W^{1,1}(X,d,\m)$ if $u\in BV(X,d,\m)$ and $|Du|\ll\m$, and then to proceed as in \cref{subsec:sobolev_1_space} in order to work out the machinery needed to establish the relation between the $1$-Cheeger constant and the first $p$-eigenvalue in \cref{res:relation_p_eigen_cheeger}. 
However, one can exploit the richer structure of the ambient space to get a more direct and plainer approach to the relation with the first $p$-eigenvalue.
Let us briefly detail the overall idea. 
In the spirit of~\cite{Cheeger99} and in an analogous way of \cref{def:BV_in_metric_spaces} (see the discussion at the end of~\cite{Miranda03}*{Sect.~2}), we say that $u\in W^{1,p}(X,d,\m)$ for some $p\in(1,+\infty)$  if there exists a sequence $\{u_k\}_{k\in\N}\subset\Lip_{\loc}(X)$ such that 
\begin{equation}
\label{eq:approx_Lip_p_def_sob}
u_k\to u\ \text{in}\ L^p(X,\m)
\quad
\text{and}
\quad
\sup_{k\in\N}\int_{X}|\nabla u_k|^p\,\de\m
<+\infty.
\end{equation} 
Therefore, we can consider the \emph{Cheeger $p$-energy} of~$u$, defined by
\begin{equation}
\label{eq:def_p-Cheeger_energy}
\mathrm{Ch}_p(u)
=
\inf
\set*{
\liminf_{k\to+\infty}\frac1p\int_X|\nabla u_k|^p\,\de\m
: 
u_k\in\Lip_{\loc}(X),
\
u_k\to u\ \text{in}\ L^p(X,\m)
},
\end{equation}
as the natural replacement of the Dirichlet $p$-energy in this  framework. 

Accordingly, for a given non-empty open set $\Omega\subset X$, we say that $u\in W^{1,p}_0(\Omega,d,\m)$ if there exists a sequence $\{u_k\}_{k\in\N}\subset\Lip_{\loc}(X)$ as in~\eqref{eq:approx_Lip_p_def_sob} (so that, in particular, $u\in W^{1,p}(X,d,\m)$) with the additional property that $\supp u_k\Subset\Omega$ for all $k\in\N$.
Therefore, coherently with what was done in \cref{def:p_regular_open_and_first_p_eigen}, we let 
\begin{equation}
\label{eq:first_p-eigen_metric}
\lambda_{1,p}(\Omega,d,\m)
=
\inf\set*{
\frac{p\,\mathrm{Ch}_p(u)}{\|u\|_p^p}
:
u\in W^{1,p}_0(X,d,\m),\ \|u\|_p>0
}.
\end{equation}

Now, it is not difficult to see that, in virtue of the definition of the Cheeger $p$-energy in~\eqref{eq:def_p-Cheeger_energy}, the infimum in~\eqref{eq:first_p-eigen_metric} can be actually restricted to functions $u\in L^p(X,\m)\cap \Lip_{\loc}(X)$ such that 
\begin{equation}
\label{eq:codirosso}
\int_X|\nabla u|^p\,\de\m<+\infty
\quad
\text{and}
\quad
\supp u\Subset\Omega.
\end{equation}
Now, if $u\in L^p(X,\m)\cap \Lip_{\loc}(X)$ satisfies~\eqref{eq:codirosso}, then the function $v=u|u|^{p-1}$ is such that $v\in L^1(X,\m)\cap\Lip_{\loc}(X)$ with $\supp v\Subset\Omega$ and
\begin{equation*}
|\nabla v|\le 
p|u|^{p-1}|\nabla u|.
\end{equation*}
Consequently, by the definition in~\eqref{eq:def_variation_metric} and H\"older's inequality, we can estimate
\begin{equation*}
|Dv|(X)
\le 
\int_X|\nabla v|\,\de\m
\le 
p\int_X|u|^{p-1}|\nabla u|\,\de\m
\le 
p\|u\|_p^{p-1}\||\nabla u|\|_p.
\end{equation*}
Therefore,
\begin{equation}
\label{eq:pigliamosche}
h_1(\Omega)
\le 
\frac{|Dv|(X)}{\|v\|_1}
\le 
\frac{p\|u\|_p^{p-1}\||\nabla u|\|_p}{\|u\|_p^p}
=
\frac{p\||\nabla u|\|_p}{\|u\|_p}\,,
\end{equation}
and thus,
\begin{equation*}
\lambda_{1,p}(\Omega,d,\m) \ge \left(\frac{h_1(\Omega)}{p}\right)^p
\end{equation*}
by the arbitrariness of $u$ in the right-hand side of~\eqref{eq:pigliamosche}, proving \cref{res:relation_p_eigen_cheeger}.

Similar considerations can be done for the relation of the $1$-Cheeger constant with the $p$-torsional creep function as in \cref{thm:torsional}. 
We leave the analogous details to the interested reader.

\subsection{Euclidean spaces with density}

Let $\mathscr A=\mathscr{B}(\mathbb{R}^n)$ be the Borel $\sigma$-algebra in $\R^n$
and consider two lower-semicontinuous density functions 
\[g\in L^\infty(\R^n;[0,+\infty)),\quad\text{and}\quad
h\in L^\infty(\R^n\times{\mathbb S}^{n-1};(0,+\infty)),\] 
so that $h$ is convex in the second variable and locally bounded away from zero, i.e., for any bounded set $\Omega\subset\R^n$, there exists $C>0$ such that
\begin{equation}
\label{eq:hbound}\frac{1}{C}\leq h(x,\nu)\leq C, \quad \forall x\in\Omega,\ \forall\nu\in\mathbb S^{n-1}\,.
\end{equation}
For any $E\in\mathscr A$, we let the weighted volume and perimeter of $E$ to be defined, respectively, by 
\begin{equation*}
\m_g(E) = \int_E g(x)\,\de x\,,
\end{equation*}
\begin{equation*}
P_h(E) = \begin{cases}
\displaystyle \int_{\partial^* E} h(x,\nu_E(x))\, \de\mathcal{H}^{n-1}(x),& \text{if }\chi_E\in BV_{\mathrm{loc}}(\R^n),\\[1.5ex]
+\infty\,,&\text{otherwise}.
\end{cases}
\end{equation*}
Here, we let $\partial^* E$ be the reduced boundary of $E$ and, for every $x\in\partial^* E$, $\nu_E(x)\in\mathbb S^{n-1}$ be the outer unit normal vector to $E$ at $x$ (see~\cite{AFP00book} for definitions and properties of sets of finite perimeter). 

If $g$ and $h$ are identically equal to $1$, $\m_g=\mathscr L^n$ is the Lebesgue measure, and $P_h=P_{\rm Eucl}$ is the standard Euclidean perimeter that satisfies all properties \ref{prop:empty}--\ref{prop:isoperim} and \ref{prop:complement_set} in the measure space $(\R^n,\mathscr A,\m)$. In particular, \ref{prop:isoperim} holds with $f(\varepsilon)=n\omega_n^{1/n}\varepsilon^{-1/n}$, and follows from the standard isoperimetric inequality
\begin{equation}\label{eq:isop}
    P_{\rm Eucl}(E)\geq n\omega_n^{1/n} \mathscr L^n(E)^{1-\frac{1}{n}},
\end{equation}
holding for any $E\in \mathscr A$.
The Cheeger problem in this setting is standard, see \cites{Leo15, Parini11}, and its minimizers are now completely characterized for a large class of planar sets~\cites{Can22, KF03, LNS17, LS20, Saracco18}, and reasonably well-understood for convex $N$-dimensional bodies~\cites{ACC05, BP21}. Recently, Cheeger clusters have been introduced and studied in~\cite{Caroccia17}, see also~\cites{BFVV18, BF19, CL19}, and in~\cites{BP18, SSforth} in relation to more general combinations than the sum of their Cheeger constants. Interestingly, the Euclidean Cheeger problem plays a role in the ROF model for regularization of noisy images, as highlighted in~\cite{ACC05}, see also~\cite{Leo15}*{Sect.~2.3}, and this is also linked to our \cref{sec:PMC} and to our \cref{cor:eigenfunction_cheeger}.

We now turn to the case of volume and perimeter with general densities, for which the Cheeger problem has been considered for $N=1$,  e.g., in \cites{ILR05,Saracco18,LT19,CFM09, KN08}.
We discuss properties \ref{prop:empty}--\ref{prop:complement_set} for the general densities $h,g$ above. 
Notice that \eqref{eq:hbound} implies that sets with locally finite  perimeter $P_h$ are all and only those with locally finite Euclidean perimeter.
Properties \ref{prop:empty} and \ref{prop:space} are immediate from the definitions. Given $E,F\in\mathscr A$, \ref{prop:sub-mod} follows from the validity of the following relations between reduced boundaries and sets operations:
\[
\begin{split}
    \partial^*(E\cap F)&\subseteq (F^{(1)}\cap \partial^*E)\cup (E^{(1)}\cap \partial^*F),\\ \partial^*(E\cup F)&\subseteq (F^{(0)}\cap \partial^*E)\cup (E^{(0)}\cap \partial^*F).
\end{split}
\]
Here, $E^{(t)}$ denotes the set of points of density $t\ge 0$ for $E$ and we recall that $\mathscr L^n(\R^n\setminus (E^{(0)}\cup E^{(1)}))=0$, see, e.g., \cite{Maggi}*{Thm.~16.3} for more details.
Property \ref{prop:lsc} holds true thanks to Reshetnyak lower-semicontinuity Theorem, see \cite{AFP00book}*{Thm.~2.38}, and uses the lower-semicontinuity and the convexity assumptions on $h$. Property \ref{prop:compact} follows from the standard compactness theorem for sets with equibounded perimeter and from \eqref{eq:hbound}. Property \ref{prop:complement_set} is equivalent to assume that $h$ is even in the second variable. 

Finally, we discuss \ref{prop:isoperim}. If \eqref{eq:hbound} also holds on $\Omega=\R^n$, the isoperimetric inequality \eqref{eq:isop} extends to this setting by observing that, for any $\varepsilon>0$ and $E\in\mathscr A$ such that $\m(A)\leq\varepsilon$, we have
\begin{equation}\label{eq:isop-density}
\begin{split}
P_h(E)&\ge \frac{P_{\rm Eucl}(E)}{C}\ge \frac{n\omega_n^{1/n}\mathscr L^n(E)^{1-\frac{1}{n}}}{C}\\
&\ge 
\frac{n\omega_n^{1/n}\m_g(E)^{1-\frac{1}{n}}}{C(\sup g)^{\frac{n-1}{n}}} 
\ge 
f(\varepsilon) \m_g(E),
\end{split}
\end{equation}
with $f(\varepsilon)=n\omega_n^{1/n}/[C\varepsilon^{1/n}(\sup g)^{\frac{n-1}{n}}]$. 
In this case, 
\cref{res:existence_N-Cheeger} implies existence of Cheeger $N$-clusters of any admissible set $\Omega\subset\R^n$ for perimeter and volume with double (anisotropic) densities. Observe that, in order for $\Omega$ to be admissible, it is needed that $g>0$ on a Borel set of positive measure contained in $\Omega$.
This covers the existence results already present in the literature for $N=1$ \cites{CFM09,KN08,Saracco18} and double densities, or for Euclidean densities and $N>1$ \cites{Caroccia17,CL19}, and generalize it to the case of double densities and $N>1$.

In the general case where \eqref{eq:hbound} does not extend to a global bound, \ref{prop:isoperim} might not hold. Some specific examples of this type are discussed in the following subsections.
Nonetheless, \cref{prop:min_PMC} applies to general densities, establishing the relation between the $1$-Cheeger constant with the curvature functional, as previously discussed, e.g., in \cites{CFM09,ABT15}.
Assuming the symmetry assumption \ref{prop:complement_set}, \cref{res:relation_with_first_1_eigen} shows that the $1$-Cheeger constant $h_1(\Omega)$ corresponds to the first $1$-eigenvalue $\lambda_{1,1}(\Omega)$ of~\cref{def:first_1_eigen}.

Estimates of the first $p$-eigenvalue in terms of the $1$-Cheeger constant are proved in \cite{KN08} for anisotropic (symmetric) perimeters whose density does not depend on the position, and Euclidean volume. 
In this setting, we observe that $P_h$ admits the following distributional formulation:
\[
P_h(E;A)=\sup\set*{\int_E\div\varphi\,\de x : \varphi\in \mathrm{C}^1_c(A; \R^n),\ \sup_{x\in A}h^*(\varphi)\leq 1},
\]
where $h^*$ denotes the norm dual to $h$. The latter yields the validity of \ref{propR:local} and \ref{propR:BV_cone}, thus allowing to apply \cref{thm:CheegerIn} and to establish a relation between the $1$-Cheeger constant and the first $p$-eigenvalue of \cref{def:p_regular_open_and_first_p_eigen}, in the same spirit of~\cite{KN08}.
As far as we know, the relation between the $1$-Cheeger constant for more general densities $h=h(x,\nu)$ and the spectrum of specific ``$p$-Laplace'' operators in the spirit of~\cite{KN08} is an open question.

\subsubsection{Gaussian space} When \eqref{eq:hbound} does not extend to  $\Omega=\R^n$, property~\ref{prop:isoperim} cannot be deduced as in \eqref{eq:isop-density}. A specific setting where this happens is the Gaussian space, corresponding to the choice $g=h=\gamma$, where
\begin{equation}\label{defgamma}
\gamma(x)=\frac{1}{(2\pi)^{n/2}}e^{-\frac{\|x\|^2}{2}}\,.
\end{equation}
Let us notice that \eqref{eq:hbound} only holds locally 
and that $(\mathbb{R}^n, \m_\gamma)$ is a probability space, i.e.,
\[
\m_\gamma(\mathbb{R}^n)=\frac{1}{(2\pi)^{n/2}}\int_{\mathbb{R}^n}e^{-\frac{\|x\|^2}{2}}\, \de x=1\,.
\]
Similarly to the Euclidean case, we can define the Gaussian perimeter of a Borel set $E$ inside an open set $\Omega\subset \mathbb{R}^n$ as
\begin{equation}\label{def}
P_\gamma(E;\Omega)=\sup\set*{\int_E(\div\varphi-\varphi\cdot x)\, \de\m_\gamma(x)\ :\ \varphi\in \mathrm{C}^{\infty}_c(\Omega;\mathbb{R}^n),\ \|\varphi\|_{\infty}\leq 1}.
\end{equation}
It is easy to see that if a set $E$ has finite Gaussian perimeter, then it also has locally finite Euclidean perimeter and
\begin{equation*}
  P_\gamma(E)  =\frac{1}{(2\pi)^{n/2}}\int_{\partial^{*} E}e^{-\frac{\|x\|^2}{2}}\, \de \mathcal{H}^{n-1}(x).
\end{equation*}
Properties \ref{prop:empty}--\ref{prop:compact} and \ref{prop:complement_set}  follow as above. 
We discuss the isoperimetric property~\ref{prop:isoperim}. For every Borel set $E\subset \mathbb{R}^n$, the following isoperimetric inequality holds (see \cites{BBJ17,Bor75,SC74})
\begin{equation*}
    P_\gamma(E)\geq \mathcal{U}(\m_\gamma(E)),
\end{equation*}
where $\mathcal{U}\colon\mathbb{R}\to\mathbb{R}$ is the \emph{Gaussian isoperimetric function} defined as $\mathcal{U}(t)=\Phi'\circ \Phi^{-1}(t)$, $t\in\mathbb{R}$, with
\[
\Phi(t)=\frac{1}{\sqrt{2\pi}}\int_{-\infty}^t e^{-\frac{s^2}{2}}\, \de s,
\]
and it has the following asymptotic behavior \cite{CMN10}:
\begin{equation*}
    \lim_{s\to 0}\frac{\mathcal{U}(s)}{s|\ln(s)|^{\frac{1}{2}}}=1.
\end{equation*}
We deduce \ref{prop:isoperim} by setting $f\colon(0,+\infty)\to (0,+\infty)$ as
\[
f(\varepsilon)=\frac{\mathcal{U}(\varepsilon)}{\varepsilon}.
\]
Moreover, using the distributional formulation \eqref{def}, one can deduce the validity of \ref{propR:BV_cone} and \ref{propR:local}. Therefore, all of our results apply in this setting. While the existence of $1$-Cheeger sets was already known, see~\cites{CMN10, JS21}, and the clustering isoperimetric problem has been recently addressed in~\cite{MN22}, the Cheeger cluster problem for $N>1$ had never been treated. The relation with the prescribed curvature functional had been studied in~\cite{CMN10}, while, up to our knowledge, the relation with the first eigenvalue of the Dirichlet $p$-Laplacian had never been proved, but only quickly observed in~\cite{JS21} for $p=2$ (the Ornstein--Uhlenbeck operator).

\subsubsection{Monomial and radial weights} 
Further settings where \ref{prop:isoperim} cannot be deduced from~\eqref{eq:hbound} are those of monomial and radial densities. 
Given $A=(a_1,\dots,a_n)\in\R^n$, with $a_i\geq 0$, for  $i=1,\dots,n$, a monomial weight is $g(x)=x^A$, where we used the notation $x^A=|x_1|^{a_1}\cdots|x_n|^{a_n}$.
A radial weight, instead, is of the type $g(x)=\|x\|^q$, $q\ge 0$. 

We have already shown that  properties \ref{prop:empty}--\ref{prop:compact} and \ref{prop:complement_set}  hold true. The isoperimetric property is discussed (for Lipschitz sets) for monomial weights in \cites{CRO13,AF22,ABCMP21,CGPRS20, CROS12}, and for radial weights in~\cites{Csato15,Csato17,ABCMP17}.
The Cheeger problem in the monomial setting has also been considered in~\cites{BP21,ABCMP21}.

\subsection{Non-local perimeters} In this section, we discuss applications to non-local perimeters. 

\subsubsection{Classical non-local perimeters} We focus on the non-local perimeters considered in \cites{CN18,RossiJ,Valdi}. Let $K:\R^n\to(0,+\infty)$ be such that
\begin{equation}\label{eq:nl-ass}
\min\{|x|,1\}K(x)\in L^1(\R^n)\quad\text{and}\quad K(x)=K(-x)\,\ \forall x\in\R^n.
\end{equation}
For a measurable set $E\subset\R^n$, we let 
\begin{equation}
\label{eq:nonlocal_per}
P_K(E)=\frac12\int_{\R^n}\int_{\R^n}K(x-y)|\chi_E(x)-\chi_E(y)|\,\de y\,\de x.
\end{equation}

We now discuss the properties of the $K$-perimeter on the measure space $(\R^n,\mathscr B(\R^n),\mathscr{L}^n)$.
Under the general assumptions~\eqref{eq:nl-ass}, properties \ref{prop:empty}, \ref{prop:space}, and~\ref{prop:complement_set}
are direct consequences of \eqref{eq:nonlocal_per}, whereas for the validity of \ref{prop:sub-mod} and \ref{prop:lsc}, we refer to~\cite{CN18}*{Prop.~2.2}.
As a consequence, \cref{res:relation_with_first_1_eigen} applies, establishing the link between $\lambda_{1,1}(\Omega)$ and $h_1(\Omega)$, already proved in~\cite{BLP14} for the fractional $s$-perimeter and in~\cite{MRT19}*{Thm.~5.3} for kernels satisfying $K\in L^1(\R^n)$.

The validity of~\ref{prop:compact} holds provided that, besides~\eqref{eq:nl-ass}, one assumes that $K\in L^1(\R^n\setminus B_r)$ for all $r>0$, see~\cite{BSu}*{Thm.~2.11}.
This yields \cref{res:existence_N-Cheeger} and \cref{prop:min_PMC}, also see the discussion in~\cite{BSu}*{Sect.~3}. 

The isoperimetric property \ref{prop:isoperim} holds, provided that $K$ is radially symmetric decreasing, see~\cite{CN18}*{Prop.~3.1} and~\cite{BSu}*{Thm.~2.19}.
Moreover, the function 
\[
f(\varepsilon)=\frac{P_{K}(B^\varepsilon)}{\varepsilon},\quad \varepsilon>0\,,
\]
where $B^\eps$ is the Euclidean ball centered at the origin with volume $\varepsilon$,
satisfies $f(\eps)\to +\infty$ as ${\eps\to0^+}$, provided that $K\notin L^1(\R^n)$, see~\cite{CN18}*{Lem.~3.2} and~\cite{BSu}*{Lem.~2.22}, thus yielding~\ref{prop:isoperim}.

For a study of the Cheeger problem for the non-local perimeter functional $P_K$ and the (weighted) Lebesgue measure, as well as for the relation between this non-local Cheeger problem with the associated non-local $L^1$ denoising model and the prescribed mean curvature functional, see~\cite{BSu}*{Sects.~3 and~6.3}.

The most relevant example of non-local perimeter functional satisfying all the aforementioned properties is the \emph{fractional  $s$-perimeter}, $s\in(0,1)$,
\begin{equation}
\label{eq:fractional_per}
P_{s}(E)=\int_{\R^n}\int_{\R^n}\frac{|\chi_E(x)-\chi_E(y)|}{|x-y|^{n+s}}\,\de y\,\de x,
\end{equation}
corresponding to the choice $K_s(x)={|x|^{-n-s}}$.
We mention that the Cheeger problem for $P_s$ has been introduced and studied in \cite{BLP14}, where 
existence of $N$-Cheeger sets of a bounded open set $\Omega\subset\R^n$ is proved for $N=1$. Up to our knowledge, for $N>1$, the $N$-Cheeger problem has never been considered in this setting, whereas the clustering isoperimetric problem has been treated in~\cite{CN20}.
We also refer to~\cite{Bessas22}*{Thm.~1.5} for a discussion of the fractional Cheeger constant with respect to the existence of minimizers of the prescribed mean curvature functional. 

\subsubsection{Fractional Gaussian spaces}

For $x,y\in\R^n$, $t>0$, let $M_t(x,y)\ge0$ be the \emph{Mehler kernel}, see \cites{CClMP20, CClMP21, CClMP21a} for the precise definition. For $s\in(0,1)$, we set
\[
K_{\sigma}(x,y)=\int_{0}^{+\infty} \frac{M_t(x,y)}{t^{\frac{s}{2}+1}}\, \de t\,,
\]
and we let
\begin{equation}
\label{eq:frac_gauss_per}
P_s^\gamma(E)=\int_{E} \int_{E^c}K_s(x,y)\, \de\gamma(y)\, \de\gamma(x)
\end{equation}
be the \emph{fractional Gaussian $s$-perimeter} of the measurable set $E\subset\R^n$,
where the measure $\gamma$ is as in \eqref{defgamma}.
Properties \ref{prop:empty} and \ref{prop:space} directly follow from the definition, whereas~\ref{prop:complement_set} is a consequence of the symmetry of the kernel $K_s(x,y)$.
For the validity of~\ref{prop:lsc} and~\ref{prop:compact}, we refer to \cites{CClMP20, CClMP21, CClMP21a}.
Therefore, \cref{prop:min_PMC} and \cref{res:relation_with_first_1_eigen} hold true. 
Property~\ref{prop:sub-mod} can be proved exactly as in~\cite{CN18}*{Prop.~2.2}. 

Concerning~\ref{prop:isoperim}, the following isoperimetric inequality 
\begin{equation*}
P_s^\gamma(E)\geq I_s^\gamma(\gamma(E))
\end{equation*}
is proved for every measurable set $E\subset\mathbb{R}^n$ in \cite{NPS18}.
Here $I_s^\gamma\colon(0,1)\to (0,+\infty)$ is the \emph{fractional Gaussian isoperimetric function}, i.e.,  $I_s^\gamma(v)$ is the fractional Gaussian $s$-perimeter of any halfspace $H$ such that $\gamma(H)=v$. As far as we know, the asymptotic behavior of $I_s(v)$ as $v\to 0^+$ is not known, and hence, \ref{prop:isoperim} cannot be guaranteed. 

\begin{remark}
Let $(X,\mathscr A,\m)$ be a non-negative $\sigma$-finite measure space and let $K\colon X\times X\to[0,+\infty]$ be a symmetric $(\m\otimes\m)$-measurable function.
For $A,B\in\mathscr A$, we set
\begin{equation*}
L_{K,\m}(A,B)=\int_A\int_B K(x,y)\,\de\m(x)\,\de\m(y)
\end{equation*}
and, given $E,\Omega\in\mathscr A$, we let
\begin{equation*}
\begin{split}
P_{K,\m}(E;\Omega)
&=
L_{K,\m}(E\cap\Omega,E^c\cap\Omega)
+
L_{K,\m}(E\cap\Omega,E^c\cap\Omega^c)
\\
&\quad+
L_{K,\m}(E\cap\Omega^c,E^c\cap\Omega)
\end{split}
\end{equation*}
be the \emph{non-local $(K,\m)$-perimeter of $E$ relative to $\Omega$}.
Arguing exactly as in the proof of~\cite{DNRV15}*{Lem.~2.4}, if $\Omega_1,\Omega_2\in\mathscr A$ are such that $\m(\Omega_1\cap\Omega_2)=0$, then
\begin{equation*}
\begin{split}
P_{K,\m}&(E;\Omega_1\cup\Omega_2)
-
P_{K,\m}(E;\Omega_1)
-
P_{K,\m}(E;\Omega_2)
\\
&=
-
L_{K,\m}(E^c\cap\Omega_1,E\cap\Omega_1^c\cap\Omega_2)
-
L_{K,\m}(E^c\cap\Omega_2,E\cap\Omega_1\cap\Omega_2^c)
\end{split}
\end{equation*}
for any $E\in\mathscr A$.
In particular, if $K(x,y)>0$ for $(\m\otimes\m)$-a.e.\ $(x,y)\in X\times X$, $\m(E\cap\Omega_1^c\cap\Omega_2)>0$, and $\m(E\cap\Omega_1\cap\Omega_2^c)>0$, then
\begin{equation*}
P_{K,\m}(E;\Omega_1\cup\Omega_2)
<
P_{K,\m}(E;\Omega_1)
+
P_{K,\m}(E;\Omega_2)\,,
\end{equation*}
and thus, in particular, the map $\Omega\mapsto P_{K,\m}(E;\Omega)$ is not finitely additive.
The reader can easily check that this, in fact, occurs for the non-local perimeters~\eqref{eq:nonlocal_per} (assuming that $K>0$) and~\eqref{eq:frac_gauss_per}.
Hence, \cref{def:per_var_measures}, and consequently the subsequent construction of the Sobolev spaces (in particular, see \cref{subsec:sobolev_1_space}), cannot be applied to such non-local perimeter functionals.  
\end{remark}

\subsubsection{Distributional fractional perimeters}
In~\cite{CS19}, a new space $BV^s(\R^n)$ of functions with 
bounded fractional variation on $\R^n$ of order $s\in (0,1)$ is introduced via a distributional approach exploiting  suitable notions of fractional gradient and divergence. 
More precisely, the fractional $s$-variation of a function $u\in L^1(\R^n)$ is defined as
\[
|D^s u|(\R^n)=\sup\set*{\int_{\R^n}u\div^s\varphi\, \de x : \varphi\in \mathrm{C}^\infty_c(\R^n; \R^n),\  \left\|\varphi\right\|_{\infty}\leq 1},
\]
where
\[
\div^s\varphi(x)=\mu_{n,s}\int_{\R^n}\frac{(y-x)\cdot (\varphi(y)-\varphi(x))}{|y-x|^{n+s+1}}\,\de y,\quad x\in\R^n,
\]
and $\mu_{n,s}$ is a renormalization constant. 
The distributional fractional $s$-pe\-ri\-me\-ter of a Lebesgue measurable set $E\subset\R^n$ is then defined as the total fractional variation of its characteristic function $|D^s\chi_E|(\R^n)$.
In~\cite{CS19}, the authors show that
\[
|D^s\chi_E|(\R^n)\le \mu_{n,s} P_{s}(E)
\]
whenever $E$ is a measurable set, where  $P_s$ is as in~\eqref{eq:fractional_per}, thus showing that the distributional approach allows to extend the usual notion of fractional perimeter and enlarge the class of sets with finite fractional perimeter. 

Following \cite{CS19}, the functional $E\mapsto|D^s\chi_E|(\R^n)$ enjoys several properties on measurable sets of $\R^n$.
In fact, \ref{prop:empty} and~\ref{prop:space} are direct consequences of the definition, yielding the validity of the first part of \cref{res:relation_with_first_1_eigen}. Moreover, properties~\ref{prop:lsc}, \ref{prop:compact}, and~\ref{prop:isoperim} are, respectively, proved in \cite{CS19}*{Prop.~4.3}, \cite{CS19}*{Thm.~3.16}, and \cite{CS19}*{Thm.~4.4} (provided that $n\ge2$, see~\cite{CS19-2}*{Thm.~3.8} for the case $n=1$).
In addition, property~\ref{prop:complement_set} trivially follows from the definition. In particular, we can apply \cref{res:existence_N-Cheeger} and \cref{prop:min_PMC}.
Finally, the validity of \ref{prop:sub-mod} is open, whereas it is known that~\ref{propR:local} is false in general, see~\cite{CS19}*{Rem.~4.9}; thus, we cannot apply the results concerning the first eigenvalue of the general Dirichlet $p$-Laplacian.
It is also worth noticing that the local form of the chain rule in this context is false~\cite{CS22}, so a direct adaptation of the proof of \cref{thm:CheegerIn} in this framework is not clear.

We remark that the aforementioned results have never been proved before in this specific non-local setting.

\subsubsection{Non-local perimeter of Minkowski type}
Following~\cite{CMP12}, given $r>0$, for any $u\in L^1_{\loc}(\R^n)$, we let
\begin{equation*}
    \mathcal E_r(u)=\frac1{2r}\int_{\R^n}\operatorname{osc}_{B_r(x)}(u)\,\de x,
\end{equation*}
where 
\begin{equation*}
    \operatorname{osc}_A(u)=\esssup_Au-\essinf_Au
\end{equation*}
denotes the \emph{essential oscillation} of $u$ on the measurable set $A\subset\R^n$.
The functional $E\mapsto\mathcal E_r(\chi_E)$ is the \emph{non-local perimeter of Minkowski type} of the measurable set $E\subset\R^n$.
As discussed in~\cite{CMP12}, such perimeter functional meets properties~\ref{prop:empty}, \ref{prop:space}, 
\ref{prop:sub-mod},
\ref{prop:lsc}, 
and
\ref{prop:complement_set}, and naturally satisfies a coarea formula~\cite{CMP12}*{Eq.~(2.3)}.
Property~\ref{prop:isoperim} can be easily deduced from the isoperimetric inequality proved in~\cite{CDNV18}*{Lem.~1.12(i)}.
Finally, as observed in~\cite{CDNV18}*{Rem.~1.5}, property~\ref{prop:compact} does not hold.
As a consequence, our results allow to infer several properties of $N$-Cheeger sets for $N\ge1$ (if they exist) in the perimeter-measure space $(\R^n, \mathscr B(\R^n),\mathscr L^n, \mathcal E_r)$.

\subsection{Riemannian manifolds}
Let $(M,g)$ be a complete Riemannian manifold of dimension $n\in\N$. 
When endowed with its distance, it is a separable metric space and its volume measure is a non-negative Borel measure that is finite on bounded Borel sets, so that one can rely on the discussion made in \cref{subsec:mms} to obtain the validity of \ref{prop:empty}, \ref{prop:space}, \ref{prop:sub-mod}, \ref{prop:lsc}, \ref{prop:compact}, and~\ref{prop:complement_set}.
Concerning~\ref{prop:isoperim}, the basic result contained in \cref{res:mira_isop} can be refined in several ways.  If $M$ has non-negative Ricci curvature, then property~\ref{prop:isoperim} is a consequence of the sharp isoperimetric inequality recently obtained in~\cite{Brendle22} for non-compact manifolds with Euclidean volume growth, also see~\cite{AFM20}. If $M$ is compact, then property~\ref{prop:isoperim} can be deduced from the celebrated  L\'evy--Gromov isoperimetric inequality,  see~\cite{Gromov}*{App.~C}.
If the Ricci curvature bound is negative, no global isoperimetric inequality can be derived without further assumptions on the manifold, such as lower bounds on the diameter of $M$, see~\cite{Milman15} and the references therein for a more detailed discussion.

Following the strategy presented in~\cref{subsec:mms}, the results contained in our paper then allow to recover Cheeger inequalities in Riemannian manifolds with non-negative curvature, in the spirit of the original appearance of Cheeger inequalities in compact Riemannian manifolds, due to Cheeger \cite{Cheeger70} for $p=2$. 
The results of our paper also cover the existence of Cheeger sets, originally proved in~\cite{Buser82} for compact Riemannian manifolds (see also~\cite{Benson16}), and the links with the the prescribed mean curvature.
We refer to~\cite{CGL15} for the relation between the Cheeger constant and the torsion problem~\eqref{eq:torsional_creep_PDE} for $p=2$ in compact Riemannian manifolds.

\subsection{\texorpdfstring{$\mathsf{CD}$}{CD}-spaces}
$\mathsf{CD}$-spaces are metric-measure spaces generalizing Riemannian manifolds with Ricci curvature bounded from below, via assumptions on a synthetic notion of curvature, encoded in the so-called \emph{curvature-dimension condition} $\mathsf{CD}(K,n)$ for $K\in\R$ and $n\ge 1$, see the cornerstones~\cites{SturmI06,SturmII06,LV09}.
Geometric Analysis on these non-smooth spaces is subject to a great interest in the recent years, see~\cites{AGS14,CM17,CM21,Ambrosio18}. 

As $\mathsf{CD}(K,n)$ spaces for $K\in\R$ and $n\ge 1$ are complete metric spaces endowed with a Borel measure $\m$ that is finite on bounded Borel sets, 
the discussion made in \cref{subsec:mms} applies to this framework, yielding the validity of properties \ref{prop:empty}, \ref{prop:space}, \ref{prop:sub-mod}, \ref{prop:lsc}, \ref{prop:compact}, and~\ref{prop:complement_set}.
Concerning~\ref{prop:isoperim}, the simple inequality provided by \cref{res:mira_isop} can be refined in several ways.  For  $K\geq 0$, a sharp isoperimetric inequality has been recently proved in~\cite{APPS22} for the subclass of $\mathsf{RCD}(0,n)$-spaces with Euclidean volume growth, when $\m=\mathcal H^n$, yielding \ref{prop:isoperim} with 
$f(\varepsilon)=n\omega_n^{1/n}\AVR(X)^{1/n}\varepsilon^{-1/n}$. Here, $\AVR(X)$ stands for the asymptotic volume ratio, assumed to be in $(0,1]$. 
We also refer to~\cite{APP22}*{Thm.~3.19 and Rem.~3.20},  where the validity of property~\ref{prop:isoperim} is discussed in more general metric-measure spaces with particular attention to the case of $\mathsf{CD}(K,n)$ spaces for $K\in\mathbb R$ and $1<n<+\infty$, and to the celebrated L\'evy--Gromov isoperimetric inequality proved in~\cites{CM17,CM18} holding for essentially non-branching $\mathsf{CD}(K,n)$ spaces with finite diameter. 

The equivalence of the Cheeger constant and the first $1$-eigenvalue of the Laplacian was previously pointed out in~\cite{CM17b}*{Sect.~5} for more general metric-measure spaces including non-branching $\mathsf{CD}(K,n)$ spaces. Lower bounds on the Cheeger constant for (essentially non-branching) $\mathsf{CD}^*(K,n)$ spaces are proved in \cites{CM17,CM17b,CM18}. 

\subsection{Carnot--Carath\'eodory spaces}

Let $\omega\subset\R^n$ be a non-empty connected open set and let $\mathcal X=\{X_1,\dots,X_k\}$ be vector fields in $\omega$ with real $\mathrm{C}^\infty$-smooth coefficients. An absolutely continuous curve $\gamma\colon[0,T]\to\omega$ is \emph{admissible} if there exists $u=(u_1,\dots,u_k)\in L^1([0,T])$ such that 
\[
\dot\gamma(t)=\sum_{i=1}^ku_i(t)X_i(\gamma(t)).
\]
Given two points $x,y\in\omega$, we let $d_{cc}(x,y)$ be the \emph{Carnot--Carath\'eodory distance} between $x$ and $y$, defined as the shortest length of admissible curves connecting them. 
We assume that the \emph{H\"ormander condition} 
\[\operatorname{rank}(\operatorname{Lie}\mathcal X)=n\] 
on the Lie algebra $\operatorname{Lie}\mathcal X$ generated by $\mathcal X$ holds true. Then, $d_{cc}(x,y)<+\infty$ for any couple of points $x,y\in\omega$ thanks to Chow--Rashewski Theorem, see~\cite{ABB} for the details.
The metric space $(\omega,d_{cc})$ is called a \emph{Carnot--Carath\'eodory space}, and it is separable. 
Assuming $(\omega,d_{cc})$ to be also complete and endowing it with the Lebesgue measure $\mathscr L^n$, one is then allowed to rely on the discussion of~\cref{subsec:mms} to guarantee the validity of properties \ref{prop:empty}, \ref{prop:space}, \ref{prop:sub-mod}, \ref{prop:lsc}, \ref{prop:compact}, and~\ref{prop:complement_set} for the distributional perimeter of \cref{def:BV_in_metric_spaces}.
One can see that this actually corresponds to the so-called \emph{$\mathcal X$-perimeter}, introduced in~\cite{CDG94} and then systematically studied in~\cites{FSSC96,GN96}. 

We discuss the validity of~\ref{prop:isoperim}. We first observe that, as summarized in~\cite{HK00}*{Sect.~11.4}, up to taking a smaller $\omega$,
we are ensured (globally in $\omega$) the validity of a doubling property for metric balls and of a $(1,1)$-Poincar\'e inequality for the \emph{horizontal gradient} $\nabla_{\mathcal X} u=\sum_{i=1}^kX_iuX_i$, thanks to the celebrated works~\cites{NSW85,Jerison86}. This allows to rely on the results of \cite{GN96}*{Thm.~1.18}, guaranteeing the validity of 
the following isoperimetric inequality for any Lebesgue measurable set $E\subset \omega$
\[
P_\mathcal X(E)\geq C_{\mathcal X} \mathscr L^n(E)^{\frac{Q-1}{Q}}.
\]
Here $C_\mathcal {X}$ is a positive constant depending on $\omega$ and $\mathcal X$, and $Q\geq n$ is the so-called \emph{homogeneous dimension}. Property \ref{prop:isoperim} then follows with $f(\varepsilon)=C_\mathcal{X}\varepsilon^{-1/Q}$.
This refines the basic inequality given by \cref{res:mira_isop}.

All the results contained in our paper then apply to this setting, establishing existence of Cheeger sets (\cref{res:existence_N-Cheeger}), relations with the the prescribed curvature functional (\cref{prop:min_PMC}), and Cheeger inequalities following the strategy presented in~\cref{subsec:mms}.
We observe that, following \cite{FHK99}*{Cor.~11}, and recalling that the topology induced by $d_{cc}$ is equivalent to the Euclidean one~\cite{ABB}*{Thm.~3.31}, the Sobolev space $\RW^{1,p}(\omega,\mathscr L^n)$ introduced in \cref{def:weak_p_slope} is given by
\[
\RW^{1,p}(\omega,\mathscr L^n)=\set*{u\in L^p(\omega,\mathscr L^n): |\nabla_{\mathcal X} u|\in L^p(\omega,\mathscr L^n)},
\]
where $|\nabla_{\mathcal X} u|=\sqrt{\sum_{i=1}^k(X_iu)^2}$.
In particular, a sub-Riemannian version of Green's identity ensures that, for an admissible bounded open set $\Omega$ and $u\in W^{1,2}(\Omega)$ with Dirichlet boundary conditions on $\Omega$, we have
\[
\int_\Omega |\nabla_{\mathcal X}u |^2\,\de x\, \de y=-\int_\Omega u \Delta_{\mathcal X} u \,\de x\, \de y,
\]
where $\Delta_{\mathcal X} u =\sum_{i=1}^k X_i^*X_iu$ is the so-called hypoelliptic sub-Laplacian associated with $\mathcal X$ and $X^*_i$ the formal adjoint of $X_i$.
In particular, \eqref{eq:CheegerIn} gives a lower bound for the bottom of the spectrum of $-\Delta_{\mathcal X}$ on $\Omega$. Cheeger's inequalities of this type have already been investigated in~\cite{Montefalcone13} in the context of Carnot groups. This paper extends them to more general Carnot--Carath\'eodory structures.

\subsection{Metric graphs}

Let $G=(V,E)$ be a connected compact graph (for simplicity, with no loops or multiple
edges).

We identify each edge $e\in E$ with an ordered pair $(i_e,f_e)$, denoting the initial and the final vertices of $e$, and we assume the existence of an increasing bijection $c_e\colon e\to[0,\ell_e]$, for some length $\ell_e\in(0,+\infty]$, such that $c_e(i_e)=0$ and $c_e(f_e)=\ell_e$, and we let $x_e=c_e(x)$ be the coordinate of the point $x\in e$.
In this case, $G$ is said a \emph{metric graph}.

A function on $G$ is identified with a collection of functions defined on $(0,\ell_e)$ for each $e\in E$, so that
\begin{equation*}
\int_G u(x)\,\de x
=
\sum_{e\in E}
\int_0^{\ell_e}[u]_e(x)\,\de x,
\end{equation*}
where $[u]_e$ is the function $u$ defined on the edge $e\in E$.
Following~\cite{Mazon}*{Def.~2.9}, the total variation of $u\in BV(G)$ is defined as 
\begin{equation}
\label{eq:mazon_var}
\Var_G(u)
=
\sup\set*{\int_G u(x)\,z'(x)\,\de x : z\in\mathcal K(G),\ \|z\|_{\infty}\le 1},
\end{equation}
where 
\begin{equation*}
\mathcal K(G)
=
\set*{z : \sum_{e\in E}\|[z]_e\|_{W^{1,2}(0,\ell_e)}<+\infty,\ 
\sum_{e\in E_v}[z]_e(v)\,\nu^e(v)=0,\, \forall v\in \mathrm{int}\, V},
\end{equation*}
being $E_v$ the set of edges incident to $e$, $\nu^e(i_e)=-1$ and $\nu^e(f_e)=1$, and $\mathrm{int}\, V$ the set of vertices with more than one incident edge.
Accordingly, the perimeter of $E\subset G$ is given by $P_G(E)=\Var_G(\chi_E)$.

Properties~\ref{prop:empty} and~\ref{prop:space} directly follow from the definition in~\eqref{eq:mazon_var}, whereas \ref{prop:lsc}, \ref{prop:compact}, \ref{prop:isoperim}, and \ref{prop:complement_set} are established in \cite{Mazon}*{Prop.~2.12}, \cite{Mazon}*{Thm.~2.6}, and~\cite{Mazon}*{Rem.~2.10}, respectively. 
Finally, \ref{prop:sub-mod} can be proved arguing as in \cite{AFP00book}*{Prop.~3.38} using the coarea formula given by~\cite{Mazon}*{Thm.~2.13}. 
As a consequence, \cref{res:existence_N-Cheeger} ensures existence of $N$-Cheeger sets for any $N\in\N$, thus generalizing~\cite{Mazon}*{Thm.~3.2} to the case of $N$-Cheeger sets.

\vspace{2ex}
\textbf{Conflict of interest.}
On behalf of all authors, the corresponding author states that there is no conflict of interest.


\bibliography{biblio} 

\end{document}